\newcommand\footnoteref[1]{\protected@xdef\@thefnmark{\ref{#1}}\@footnotemark}
\newtheorem{lemma}{Lemma}[section]
\newtheorem{thm}[lemma]{Theorem}
\newtheorem{prop}[lemma]{Proposition}
\newtheorem{cor}[lemma]{Corollary}
\newtheorem*{cor*}{Corollary}
\theoremstyle{definition}
\newtheorem{defn}[lemma]{Definition}
\newtheorem{example}[lemma]{Example}
\theoremstyle{remark}
\newtheorem{rem}[lemma]{Remark}
\newcommand\reallywidehat[1]{%
\savestack{\tmpbox}{\stretchto{%
  \scaleto{%
    \scalerel*[\widthof{\ensuremath{#1}}]{\kern-.6pt\bigwedge\kern-.6pt}%
    {\rule[-\textheight/2]{1ex}{\textheight}}
  }{\textheight}%
}{0.5ex}}%
\stackon[1pt]{#1}{\tmpbox}%
}
\author{Leonardo Ferrari}
\address{\parbox{\linewidth}{Institut de Math\'ematiques, Universit\'e de Neuch\^atel, 2000 Neuch\^atel, Suisse / Switzerland}}
\email[]{leonardocpferrari at gmail dot com}
\title[Coloured Symmetries of $QHS^3$ that Cover Right-Angled Polytopes]{Coloured Symmetries of Rational Homology $3$-Spheres that Cover Right-Angled Polytopes}
\begin{document}

\begin{abstract}
In the present paper we study hyperbolic manifolds that are rational homology $3$--spheres obtained by colouring of right--angled polytopes. We study the existence (or absence) of different kinds of symmetries of rational homology spheres that preserve the tessellation of the manifold into polytopes. We also describe how to create colourings with given symmetries.
\end{abstract}

\subjclass[2010]{57N16, 57M50, 52B10, 52B11}

\maketitle

\section{Introduction}

Colourings of simple $3$--dimensional hyperbolic polytopes with elements of $\mathbb{Z}^3_2$ were first used by Vesnin \cite{Ves87} in order to produce closed hyperbolic $3$--manifolds from the so--called L\"obell polyhedra. 

Later on, Davis and Januszkiewicz \cite{DJ91} used a similar idea in order to produce complex $n$--manifolds with torus actions. The fixed point set of the conjugation involution of any such manifold produces a real $n$--manifold endowed with $\mathbb{Z}_2$--action \cite[Corollary 1.9]{DJ91}. The latter gives rise to a $\mathbb{Z}^n_2$--colouring that corresponds to a manifold known as a small cover. The technique of colourings of right--angled polyhedra was then generalised in the works \cite{KMT, KS?}, and the recent work of Choi and Park \cite{CP2} provides an efficient method to compute cohomology rings of manifolds obtained via colourings. Right--angled polyhedra attracted a lot of interest per se and as constituent pieces of hypebolic manifolds, cf. \cite{Atkinson, Dufour, Kolpakov, KM, PV}.   

Colourings of polytopes provide an accessible way to build hyperbolic manifolds with prescribed topological characteristics. Cf. the recent surveys \cite{Martelli, Vesnin} for more information about the recent results in this area. 

One of the classes of manifolds with relatively simple topological structure are homology spheres. We are mostly interested in \textit{rational homology spheres} (termed $QHS$ for short) and, moreover, in those that are hyperbolic manifolds (which we call hyperbolic $QHS$).  

Having a hyperbolic structure, however, implies that their fundamental groups are highly non--trivial, and this leaves only a narrow escape for hyperbolic $QHS$ to exist. For example, there are no hyperbolic QHS in dimension $2$. The first hyperbolic $QHS$ in dimension $3$ was constructed by Seifert and Weber \cite{SW}. A $QHS$ produced by a $\mathbb{Z}^3_2$--colouring of the right--angled hyperbolic dodecahedron can be found in the work of Garrison and Scott \cite{GS}. There is still no known example of a hyperbolic $QHS$ in dimensions $\geq 4$. Moreover, it is known that even such a large class as arithmetic hyperbolic manifolds cannot produce a QHS of even dimension $n > 4$ \cite{emery-qhs}. 



We can obtain some information about the so--called coloured symmetries \cite[Section 2.1]{KS?} of QHS obtained from $\mathbb{Z}^k_2$ ($k\geq 3$) colourings of compact right--angled polytopes in $\mathbb{H}^3$. Informally speaking, coloured symmetries correspond to self--isometries of such manifolds that can be read off the combinatorial structure of the colouring. Modulo the deck transformations coming from $\mathbb{Z}^k_2$, we are left with the so--called coloured symmetries of the polyhedron (endowed with a fixed colouring). 

\begin{thm}\label{thm:coloured-symmetries}
Let $\lambda$ be a rank $k$ orientable colouring of the right-angled, compact $\mathcal{P} \subset \mathbb{H}^3$ such that the corresponding manifold $\mathcal{M}_\lambda$ is a rational homology sphere. Then the admissible symmetry group $\mathrm{Sym}_{\lambda}(\mathcal{P})$ is isomorphic to a subgroup of $GL_k^{\mathrm{or}}(\mathbb{Z}_2)$. In particular, $\mathrm{Sym}_{\lambda}(\mathcal{P})\hookrightarrow S_4$ for $k=3$ and $\mathrm{Sym}_{\lambda}(\mathcal{P})\hookrightarrow\mathbb{Z}_2^3 \rtimes SL_3(\mathbb{Z}_2)$ for $k=4$. Moreover, $|\mathrm{Sym}_{\lambda}(\mathcal{P})|$ is odd for any rank $k>4$ colouring.
\end{thm}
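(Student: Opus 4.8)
The plan is to realise $\Sym_\lambda(\mathcal{P})$ inside $\GL_k(\matZ_2)$ through its action on colours, to force that image into $\GL_k^{\mathrm{or}}(\matZ_2)$ using orientability, and then to rule out even-order elements for $k>4$ via the combinatorial description of the rational homology. Since $\lambda$ has rank $k$, the colours $\{\lambda(F)\}$ span $\matZ_2^k$, so every admissible symmetry $\sigma$ determines a unique $M_\sigma\in\GL_k(\matZ_2)$ with $\lambda(\sigma F)=M_\sigma\lambda(F)$ for all facets $F$; setting $\rho(\sigma)=M_\sigma$ gives a homomorphism, injective because its kernel consists of symmetries fixing every colour, which are trivial modulo the $\matZ_2^k$ of deck transformations. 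Orientability supplies a functional $\phi\in(\matZ_2^k)^{\ast}$ with $\phi(\lambda(F))=1$ for every $F$, and spanning makes $\phi$ unique. From $\phi(M_\sigma\lambda(F))=\phi(\lambda(\sigma F))=1=\phi(\lambda(F))$ and spanning I get $\phi\circ M_\sigma=\phi$, whence $\rho(\sigma)\in\Stab(\phi)=\GL_k^{\mathrm{or}}(\matZ_2)$.

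Choosing coordinates with $\phi=e_k^{\ast}$ identifies this stabiliser with the block group $\left\{\left(\begin{smallmatrix} B & v\\ 0 & 1\end{smallmatrix}\right): B\in\GL_{k-1}(\matZ_2),\,v\in\matZ_2^{k-1}\right\}\cong \matZ_2^{k-1}\rtimes\GL_{k-1}(\matZ_2)=\mathrm{AGL}_{k-1}(\matZ_2)$. The two special cases then fall out of standard isomorphisms: $\mathrm{AGL}_2(\matZ_2)\cong S_4$, since the affine group acts faithfully on the four points of $\matZ_2^2$; and $\GL_3(\matZ_2)=\SL_3(\matZ_2)$ over $\matZ_2$, giving $\mathrm{AGL}_3(\matZ_2)\cong\matZ_2^3\rtimes\SL_3(\matZ_2)$.

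For the odd-order statement I would first translate $b_1(\mathcal{M}_\lambda)=0$ into combinatorics. By the cohomology formula of \cite{CP2}, $H^1(\mathcal{M}_\lambda;\matQ)\cong\bigoplus_{\omega}\widetilde H^0(\mathcal{K}_\omega;\matQ)$, where $\mathcal{K}=\partial\mathcal{P}^{\ast}$ is a triangulated $2$-sphere on the facet set and $\mathcal{K}_\omega$ is the full subcomplex on $\{F:\omega(\lambda(F))=1\}$. Thus $\mathcal{M}_\lambda$ is a rational homology sphere exactly when every $\mathcal{K}_\omega$ with $\omega\neq 0,\phi$ is connected; the corresponding condition on $H^2$ is equivalent by Alexander duality in $S^2$, using that the complement of $\mathcal{K}_\omega$ is precisely $\mathcal{K}_{\omega+\phi}$. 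Each $\sigma\in\Sym_\lambda(\mathcal{P})$ acts simplicially on $\mathcal{K}$ and permutes these subcomplexes by $\sigma(\mathcal{K}_\omega)=\mathcal{K}_{\omega\circ M_\sigma^{-1}}$.

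The crux is to rule out an involution $\tau$ once $k>4$. I would lift $\tau$ to an isometry $\widetilde\tau$ of $\mathcal{M}_\lambda$ permuting the $2^k$ tiles by the affine map $x\mapsto M_\tau x+b$, and study its fixed locus on $\mathcal{K}=S^2$, which for a simplicial involution is empty, two points, or a circle. Writing $r=\mathrm{rank}(M_\tau+I)\le\lfloor k/2\rfloor$, equivariance $\lambda\circ\tau=M_\tau\circ\lambda$ confines the colours of $\tau$-fixed facets to $\ker(M_\tau+I)$, and each of the $2^{k-r}-2$ nontrivial $\tau$-fixed functionals $\omega$ yields a $\tau$-invariant $\mathcal{K}_\omega$ whose two halves are exchanged across the fixed locus, so connectivity forces $\mathcal{K}_\omega$ to meet that locus. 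The sharpest constraint is the rotation case, where the fixed locus is just two points and so carries colours spanning a subspace $U$ with $\dim U\le 2$: then every nontrivial fixed $\omega$ vanishing on $U$ must vanish on all colours of one hemisphere, and a dimension count against $U+\mathrm{im}(M_\tau+I)+W=\matZ_2^k$ (with $W$ the span of the hemisphere colours) forces $k\le 4$. I expect reconciling this with the reflection and free cases, and making the dimension bookkeeping exact, to be the main obstacle; the values $k=3,4$ survive precisely because a fixed locus carrying at most two colours can still meet every subcomplex the rational homology sphere condition requires.
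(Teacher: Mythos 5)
Your proposal breaks down at the two points where the QHS hypothesis actually has to do work, and both gaps are genuine. First, the injectivity of the colour homomorphism $\rho=\Psi$ is not trivial, and your justification --- that its kernel ``consists of symmetries fixing every colour, which are trivial modulo the $\mathbb{Z}_2^k$ of deck transformations'' --- is not valid: $\mathrm{Sym}_\lambda(\mathcal{P})$ is by definition a group of symmetries of the polytope $\mathcal{P}$, not of $\mathcal{M}_\lambda$, so deck transformations never enter, and a non-trivial symmetry of $\mathcal{P}$ can perfectly well preserve every colour. For instance, colour opposite facets of the $3$-cube with the same canonical vector: the colouring is proper and orientable (it gives the $3$-torus), and the antipodal map lies in $\ker\Psi$. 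So injectivity fails without the QHS hypothesis, and the paper itself warns (remark following Theorem \ref{thm:admissible groups}) that $\Psi$ need not be injective in general. In the paper, injectivity is exactly Proposition \ref{prop:order preserving}: if some power of an admissible symmetry induced the trivial permutation of colours, it would fix every subcomplex $K_\omega$, in particular one that is a rational homology point and disjoint from its fixed-point set, contradicting the Lefschetz fixed point theorem (Lemma \ref{lemma:fixed point}); and that argument leans on the classification of admissible symmetries (Theorem \ref{thm:possible symmetries}). Nothing in your first paragraph substitutes for this chain.

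Second, the part you yourself identify as the crux --- excluding even-order elements when $k>4$ --- is not proved, and the mechanism you propose fails in the decisive case. The inference ``the two halves are exchanged across the fixed locus, so connectivity forces $\mathcal{K}_\omega$ to meet that locus'' is sound for a reflection, whose fixed circle separates $S^2$, but for an order-$2$ rotation the complement of the two fixed points is connected: there are no exchanged halves, and connectivity of an invariant subcomplex alone does not force it to meet the axis (an invariant circuit around the axis is connected and avoids it; it is ruled out only because its \emph{complement} would be disconnected, i.e., by acyclicity). What the paper uses instead is precisely acyclicity plus Lefschetz: every $K_\omega$ is a $QHP$, so any simplicial self-map of it has a fixed point (Lemma \ref{lemma:fixed point}), and the parity count of Lemma \ref{lemma:parity of complexes} (the subcomplexes containing a given rank-$s$ set of colours number $2^{k-s}-1$, an odd number) guarantees that an order-$2$ admissible symmetry fixes some $K_\omega$ together with its complement $K_{\epsilon-\omega}$, which avoids the fixed locus --- contradiction (Propositions \ref{antipodal symmetries} and \ref{prop: no good order 2}). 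Oddness for $k\geq 5$ then follows because every order-$2$ rotation is good once its two fixed points meet at most four facets (Proposition \ref{prop:is good}), reflections are excluded for all $k$ by the separate covering/Euler-characteristic argument of Proposition \ref{prop: no reflections}, and edge rotoreflections square to edge rotations (Corollary \ref{cor:odd coloured symmetry groups}). Your alternative dimension count involving $U+\mathrm{im}(M_\tau+I)+W$ is never carried out, and you concede the reflection and free cases are unreconciled. What does stand in your proposal is the middle, purely algebraic part: the identification $GL_k^{\mathrm{or}}(\mathbb{Z}_2)\cong\mathbb{Z}_2^{k-1}\rtimes GL_{k-1}(\mathbb{Z}_2)$ as the stabiliser of the orientation functional, with the $k=3,4$ special cases; this is correct and is a cleaner derivation than the paper's bare assertion, but it is the easiest part of the theorem.
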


We also obtained a full classification of the possible coloured symmetries of $QHS^3$ colourings, dividing the coloured symmetries into \text{good} or \text{bad}. A symmetry $\varphi$ is said to be good if the colours around $\mathrm{Fix}(\varphi|_{\partial}$ do not span the colouring space; and bad otherwise.

\begin{thm}\label{thm:possible symmetries and ranks}
Let $\lambda$ be a rank $k$ orientable colouring of $\mathcal{P} \subset \mathbb{H}^3$ such that $\mathcal{M}_\lambda$ is a $QHS$. Then $\varphi \in \mathrm{Sym}_{\lambda}(\mathcal{P})$ only if $\varphi$ is one of the following:
\begin{enumerate}
    \item a bad edge rotation (for $k\le 4$);
    \item a bad face-edge rotation (for $k=3$);
    \item a good edge rotoreflection (for $k\le 4$);
    \item a good odd-order face rotation;
    \item a vertex rotation;
    \item a face-vertex rotation.
\end{enumerate}
In particular, $\mathrm{Sym}_{\lambda}(\mathcal{P})$ does not contain any good order $2$ symmetry, and $\varphi \in \mathrm{Sym}_{\lambda}(\mathcal{P})$ is good only if $\varphi$ is an edge rotoreflection or an odd-order rotation.
\end{thm}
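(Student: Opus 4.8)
The plan is to reduce the statement to a fixed-point count on the closed hyperbolic manifold $\calM_\lambda$, using that a $QHS$ has the rational homology of $S^3$. First I would pin down the geometric shape of an admissible symmetry. Since $\calP$ is simple and right-angled, the three faces at every vertex are pairwise orthogonal, so a finite-order isometry $\varphi$ of $\calP$ is determined up to conjugacy by the type of $\mathrm{Fix}(\varphi)$ and its incidence with the cells: a reflection (fixed totally geodesic plane), a rotation whose axis meets $\partial\calP$ in two points lying on faces, edges, or vertices, or a rotoreflection with a single fixed point. This already produces the finite menu of candidate types appearing in the statement, and the rigidity of the local model forces the arithmetic constraints (e.g. a vertex rotation has order $3$, since $\varphi$ must permute the three orthogonal edges at the fixed vertex, and a face–vertex rotation is likewise of odd order).

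Next I would pass to the lift. Writing $\calM_\lambda=(\calP\times\matZ_2^k)/\!\sim$, with the copy indexed by $v$ glued along $F$ to the copy indexed by $v+\lambda(F)$, an admissible $\varphi$ with linear part $A\in \GL_k^{\mathrm{or}}(\matZ_2)$ (Theorem~\ref{thm:coloured-symmetries}) lifts to $\tilde\varphi(x,v)=(\varphi(x),Av+c)$. The fixed set $\mathrm{Fix}(\tilde\varphi)$ is assembled from $\mathrm{Fix}(\varphi)\cap\calP$ over the affine-fixed indices $\{v:Av+c=v\}$, and it is here that the span condition must be read off: the colours occurring around $\mathrm{Fix}(\varphi|_\partial)$ control how the local pieces of the axis (or fixed plane) close up across the gluings, hence the dimension of the affine-fixed set and the orientation behaviour of $\tilde\varphi$. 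I would make this dictionary precise so that the \emph{good}/\emph{bad} alternative is converted into concrete homological data of $\mathrm{Fix}(\tilde\varphi)$.

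The main engine is then the Lefschetz fixed point formula. As $\calM_\lambda$ is a $QHS$, an involution $\tilde\varphi$ has $L(\tilde\varphi)=1\mp1\in\{0,2\}$ according as it preserves or reverses orientation, while the fixed locus of a nontrivial isometry of a closed hyperbolic $3$-manifold is a disjoint union of geodesics, of Euler characteristic $0$, and totally geodesic surfaces, which are hyperbolic and hence have negative Euler characteristic. Thus $\chi(\mathrm{Fix}(\tilde\varphi))\le 0$, and the identity $L(\tilde\varphi)=\chi(\mathrm{Fix}(\tilde\varphi))$ forces $L(\tilde\varphi)=0$ and excludes every $2$-dimensional fixed component. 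In particular no admissible involution can reverse orientation and none can fix a surface, which rules out reflections and is the geometric source of the final clause that a good symmetry must be an edge rotoreflection or an odd-order rotation. The surviving even-order possibilities (bad edge and bad face–edge rotations, good edge rotoreflections) are pruned by the rank bookkeeping of Theorem~\ref{thm:coloured-symmetries}: since $|\Sym_\lambda(\calP)|$ is odd for $k>4$, every order $2$ feature is confined to $k\le 4$ (and the face–edge case to $k=3$), whereas the odd-order rotations persist in all ranks; matching each surviving type to its good/bad label yields the list \textup{(1)--(6)}.

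The step I expect to be the true obstacle is the dictionary of the second paragraph together with the exclusion of \emph{good order $2$ rotations}, which the rational Lefschetz count does not see. In a $QHS$ every $1$-cycle is already rationally trivial, so the obstruction cannot be located in the rational $1$-homology of the fixed geodesics; it has to sit in the orientation type and the local model of $\mathrm{Fix}(\tilde\varphi)$, or in the $2$-primary part of the homology computed from the colouring (for instance via the Choi--Park description of $H_*(\calM_\lambda;\matZ_2)$). The crux is therefore to prove that ``the colours around $\mathrm{Fix}(\varphi|_\partial)$ do not span'' forces, for an order $2$ symmetry, exactly the forbidden configuration detected above, and to track how the edge- versus vertex-incidence and the coset of lifts (the parameter $c$) change the local picture so as to place the rotoreflections correctly. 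Carrying out this coupling between the combinatorial span condition and the geometric fixed-point data is where the bulk of the work will lie.
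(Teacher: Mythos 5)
Your proposal leaves the central case of the theorem unproved, and the engine you do set up has a geometric flaw.

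First, the missing idea. You concede at the end that your rational Lefschetz count on $\calM_\lambda$ ``does not see'' the exclusion of good order~$2$ rotations, and you only speculate where the obstruction might live (orientation type, $2$-primary homology). But this exclusion \emph{is} the heart of the theorem, and the paper resolves it by a mechanism entirely absent from your plan: one applies the Lefschetz fixed point theorem not to the manifold but to the subcomplexes $K_\omega\subset K_{\calP}$, $\omega\in\mathrm{Row}(\Lambda)\setminus\{0,\varepsilon\}$, each of which is a rational homology point when $\calM_\lambda$ is a $QHS$ (Lemma~\ref{lemma:subcomplexes are HP}, Lemma~\ref{lemma:fixed point}). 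The counting input is Lemma~\ref{lemma:parity of complexes}: the number of such subcomplexes containing the colours around $\mathrm{Fix}(\varphi)$ is $2^{k-s}-1$, which is odd, so an order~$2$ admissible symmetry must fix one of them, say $K_\omega$; but then it also fixes the complementary $QHP$ subcomplex $K_{\varepsilon-\omega}$, which is \emph{disjoint} from $\mathrm{Fix}(\varphi)$, and Lefschetz applied to $|K_{\varepsilon-\omega}|$ produces a fixed point there --- a contradiction (Proposition~\ref{prop: no good order 2}). This parity-plus-complex-level-Lefschetz argument, together with the elementary facet count of Proposition~\ref{prop:is good} (a symmetry is good as soon as $k$ exceeds the number of facets meeting its fixed set: $1$ per face midpoint, $2$ per edge midpoint, $3$ per vertex), is also what yields the rank thresholds $k\le 4$, $k=3$ in items (1)--(3). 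Your alternative derivation of those thresholds from Theorem~\ref{thm:coloured-symmetries} is circular in the paper's logical order, since the odd-order statement for $k>4$ (Corollary~\ref{cor:odd coloured symmetry groups}) is itself deduced from the present theorem; and it cannot in any case separate $k=3$ from $k\le 4$ for face-edge rotations.

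Second, the step that fails. Your claim that the fixed locus of a finite-order isometry of a closed hyperbolic $3$-manifold consists only of geodesics and totally geodesic surfaces, whence $\chi(\mathrm{Fix}(\tilde\varphi))\le 0$, is false: isolated fixed points occur whenever the local differential is $-I$ or a nontrivial rotoreflection, and each contributes $+1$ to $\chi$. This is not a fringe case here. If $\varphi$ is a reflection whose mirror crosses an edge $F_1\cap F_2$ of $\calP$ orthogonally, then a lift of $\varphi$ acquires, at copies of that crossing, the local action $r_{1}r_{2}\hat\varphi=-I$ (product of the two face reflections with the mirror), i.e.\ genuinely isolated fixed points; these positive contributions can offset the negative Euler characteristic of a fixed surface, so $L(\tilde\varphi)=2$ alone yields no contradiction. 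Relatedly, ``no admissible involution can reverse orientation'' is not a meaningful conclusion in this setting: deck transformations corresponding to odd-weight vectors are orientation-reversing, so every admissible symmetry has both orientation-preserving and orientation-reversing lifts. This is precisely why the paper's exclusion of reflections (Proposition~\ref{prop: no reflections}) requires a genuinely different and longer argument: the induced colouring on the fixed right-angled polygon, an orbifold Euler characteristic computation forcing positive genus, hence a disconnected subcomplex $K_\omega\cap\mathrm{Fix}(\varphi)$, and a path/cycle argument inside the $QHP$ complex $K_\omega$. None of this can be replaced by the manifold-level Lefschetz count as you have set it up.
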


\subsection*{Outline of the paper}

In Section~\ref{sec:preliminaries} we provide all the necessary definition and basic facts about (hyperbolic) manifolds obtained via colourings of right--angled polyhedra, including results about their cohomology and coloured isometries. 

In Section~\ref{sec:3-qhs} we study possible obstructions to admissible symmetries of hyperbolic rational homology $3$--spheres and the general group structure of their coloured isometries. Here we prove Theorem~\ref{thm:coloured-symmetries} (which is Theorem~\ref{thm:admissible groups} and Corollary~\ref{cor:odd coloured symmetry groups} combined) and Theorem~\ref{thm:possible symmetries and ranks} (Theorem~\ref{thm:possible symmetries} and Proposition~\ref{prop:is good}). We also provide some illustrative examples. 

In Section~\ref{sec:constructions} we explain how to construct colourings that contain a given symmetry of the (uncoloured) polyhedron in their admissible symmetry groups.

In Section~\ref{sec:questions} some open questions are stated regarding colourings of right--angled polytopes and their admissible symmetries. 


\section{Preliminaries}\label{sec:preliminaries}

A finite-volume polytope $\mathcal{P} \subset \mathbb{X}^n$ (for $\mathbb{X}^n = \mathbb{S}^n, \mathbb{E}^n, \mathbb{H}^n$ being spherical, Euclidean and hyperbolic $n$--dimensional space, respectively, cf. \cite[Chapters 1--3]{Ratcliffe}) is called {\itshape right-angled} if any two codimension $1$ faces (or facets, for short) are either intersecting at right angles or disjoint. It is known that compact, hyperbolic right-angled polytopes cannot exist if $n > 4$. The only compact, right-angled spherical and Euclidean polytopes are the $n$-simplex and the $n$-parallelotope, while \cite[Theorem 2.4]{Vesnin} gives us a sufficient condition for abstract $3$-polytopes to be realizable as right-angled, hyperbolic polytopes. There is no such classification for right-angled $n$-polytopes with $n\geq 4$.


\subsection{Colourings of right--angled polytopes}
One of the important properties of hyperbolic right-angled polytopes is that their so-called colourings provide a rich class of hyperbolic manifolds. By inspecting the combinatorics of a colouring, one may obtain important topological and geometric information about the associated manifold.

\begin{defn}
Let $\mathcal{S}$ be an $n$--dimensional simplex with the set of vertices $\mathcal{V}=\{v_1,\dots,v_{n+1}\}$ and $W$ an $\mathbb{Z}_2$--vector space. A map $\lambda: \mathcal{V} \rightarrow W$ is called a {\itshape colouring} of $\mathcal{S}$. The colouring of $\mathcal{S}$ is {\itshape proper} if the vectors $\lambda(v_i), \ i=1,\dots, n+1$ are linearly independent. 

Let $K$ be a simplicial complex with the set of vertices $\mathcal{V}$. Then a {\itshape colouring} of $K$ is a map $\lambda: \mathcal{V} \rightarrow W$. A colouring $\lambda$ of $K$ is {\itshape proper} if $\lambda$ is proper on each simplex in $K$. 
\end{defn}

Notice that if $\mathcal{P} \subset \mathbb{X}^n$ is a compact right-angled polytope then $\mathcal{P}$ is necessarily simple and the boundary complex $K_\mathcal{P}$ of its dual $\mathcal{P}^*$ is a simplicial complex.

\begin{defn}
Let $\mathcal{P} \subset \mathbb{X}^n$ be a polytope with the set of facets $\mathcal{F}$ and $K_\mathcal{P}$ be the maximal simplicial subcomplex of the boundary of $\mathcal{P}^*$. A \textit{colouring} of $\mathcal{P}$ is a map $\lambda: \mathcal{F} \rightarrow W$, where $W$ an $\mathbb{Z}_2$--vector space. This map naturally defines a colouring of $K_\mathcal{P}$. Then $\lambda$ is called \textit{proper} if the induced colouring on $K_\mathcal{P}$ is proper. 
\end{defn}

If the polytope $\mathcal{P}$ or the vector space $W$ are clear from the context, then we will omit them and simply refer to $\lambda$ as a colouring. The \textit{rank} of $\lambda$ is the $\mathbb{Z}_2$--dimension of $\mathrm{im}\, \lambda$. We will always assume that colourings are surjective, in the sense that the image of the map $\lambda$ is a generating set of vectors for $W$.

A colouring of a right-angled $n$-polytope $\mathcal{P}$ naturally defines a homomorphism, which we still denote by $\lambda$ without much ambiguity, from the associated right-angled Coxeter group~$\Gamma(\mathcal{P})$ (generated by reflections in all the facets of $\mathcal{P}$) into $W$ (with its natural group structure). Being a Coxeter polytope, $\mathcal{P}$ has a natural orbifold structure as the quotient $\mathbb{X}^n /_{\Gamma(\mathcal{P)}}$.

\begin{prop}[\cite{DJ91}, Proposition 1.7]\label{DJ}
If the colouring $\lambda$ is proper, then $\ker \lambda < \Gamma(\mathcal{P})$ is torsion-free, and $\mathcal{M}_\lambda = \mathbb{X}^n /_{\ker \lambda}$ is a closed manifold.
\end{prop}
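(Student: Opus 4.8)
The plan is to reduce both assertions to an analysis of the torsion of the Coxeter group $\Gamma(\mathcal{P})$ and then to a free, properly discontinuous geometric action on $\mathbb{X}^n$. Write $(\Gamma(\mathcal{P}), S)$ for the Coxeter system, where $S = \{s_f : f \in \mathcal{F}\}$ consists of the reflections in the facets. First I would recall the structure of finite subgroups: by the classical theorem of Tits on Coxeter groups, every element of finite order in $\Gamma(\mathcal{P})$ is conjugate to an element of a finite standard parabolic subgroup $\Gamma_T = \langle s_f : f\in T\rangle$ for some $T\subseteq S$. Since $\Gamma(\mathcal{P})$ is right-angled, $\Gamma_T$ is finite if and only if the generators in $T$ pairwise commute, in which case $\Gamma_T \cong \mathbb{Z}_2^{|T|}$; and because $K_\mathcal{P}$ is flag, a pairwise-commuting set $T$ is exactly a simplex of $K_\mathcal{P}$, i.e. a collection of facets meeting at a common face of $\mathcal{P}$.

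Next I would exploit that $W$ is abelian to dispose of conjugation: if $x = w g w^{-1}$ with $g\in\Gamma_T$, then $\lambda(x) = \lambda(g)$. Hence $\ker\lambda$ is torsion-free if and only if $\lambda(g)\neq 0$ for every nontrivial $g$ in every finite parabolic $\Gamma_T$. Writing such a $g = \prod_{f\in A} s_f$ for a nonempty $A\subseteq T$, one has $\lambda(g) = \sum_{f\in A}\lambda(f)$, so the condition is precisely the linear independence of $\{\lambda(f)\}_{f\in T}$ over $\mathbb{Z}_2$. Now properness of $\lambda$ asserts independence of the colours on every simplex of $K_\mathcal{P}$, equivalently on the $n$ facets through each vertex of $\mathcal{P}$; since independence passes to subsets and every finite-parabolic $T$ is a simplex of $K_\mathcal{P}$, the condition holds for all such $T$. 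This yields that $\ker\lambda$ is torsion-free.

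Finally I would pass to geometry. The group $\Gamma(\mathcal{P})$ acts properly discontinuously on $\mathbb{X}^n$ as a reflection group with fundamental domain $\mathcal{P}$, and the stabilizer of a point in the relative interior of the face $\bigcap_{f\in T} f$ is exactly $\Gamma_T$. Thus an element of $\Gamma(\mathcal{P})$ has a fixed point in $\mathbb{X}^n$ if and only if it is conjugate into a finite parabolic, i.e. if and only if it is torsion. As $\ker\lambda$ is torsion-free it therefore acts freely, and being a discrete group acting freely and properly discontinuously on the manifold $\mathbb{X}^n$, the quotient $\mathcal{M}_\lambda = \mathbb{X}^n/\ker\lambda$ is a manifold (without boundary, since $\mathbb{X}^n$ has none), with the quotient map a covering. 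Compactness follows because $\lambda$ has finite image, so that $[\Gamma(\mathcal{P}):\ker\lambda] = |\mathrm{im}\,\lambda| < \infty$ and $\mathcal{M}_\lambda$ is a finite-sheeted cover of the compact orbifold $\mathcal{P}$; hence $\mathcal{M}_\lambda$ is a closed manifold.

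I expect the main obstacle to be the group-theoretic input on finite subgroups of Coxeter groups, namely that every torsion element is conjugate into a finite standard parabolic and that, in the right-angled case, these parabolics are detected by the simplices of $K_\mathcal{P}$ via the flag property. Once this dictionary between torsion, fixed points in $\mathbb{X}^n$, and faces of $\mathcal{P}$ is established, both the torsion-freeness and the manifold-plus-compactness statements become formal.
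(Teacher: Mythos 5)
The paper itself offers no proof of this proposition --- it is quoted from \cite{DJ91}, Proposition 1.7 --- so your attempt can only be judged against the standard argument. Your architecture (kill conjugation using that $W$ is abelian, reduce torsion-freeness to linear independence of colours on finite parabolics, then pass to a free, properly discontinuous, cocompact action) is the right one, and the linear-algebra step, the covering-space step and the compactness step are all correct. But there is a genuine gap at the point you yourself identify as the crux: the claim that ``because $K_\mathcal{P}$ is flag, a pairwise-commuting set $T$ is exactly a simplex of $K_\mathcal{P}$''. Flagness of $K_\mathcal{P}$ --- that facets which pairwise intersect must share a common face --- is never proved, and it is not a formality: it is \emph{false} for $\mathbb{X}^n=\mathbb{S}^n$, where the only compact right-angled polytope is the $n$-simplex, whose $n+1$ facets are pairwise orthogonal yet have empty common intersection, so that $K_\mathcal{P}=\partial\Delta^n$ is the standard example of a non-flag complex. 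There your reduction collapses, and so does the statement as literally quoted: colouring the three sides of the right-angled spherical triangle by $e_1$, $e_2$, $e_1+e_2\in\mathbb{Z}_2^2$ is proper (properness only constrains pairs of sides), yet the product of the three reflections is the antipodal map, a torsion element of $\ker\lambda$ (the quotient is still a closed manifold, namely $\mathbb{RP}^2$, but torsion-freeness fails). In short: Tits' theorem hands you pairwise-commuting sets of generators, properness only controls sets of facets meeting along a face of $\mathcal{P}$, and the bridge between the two is exactly what is missing.

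In the Euclidean and hyperbolic cases the bridge is true but is a geometric theorem, not a combinatorial triviality: it is easy for the parallelotope, follows from Andreev's theorem (no prismatic $3$-circuits) for compact polytopes in $\mathbb{H}^3$, and in general is deduced from the same geometric input that renders Tits' theorem unnecessary here. Namely: every finite subgroup of $\Gamma(\mathcal{P})$ fixes a point of $\mathbb{X}^n$ (Cartan's fixed-point theorem, available precisely because $\mathbb{E}^n$ and $\mathbb{H}^n$ are complete and nonpositively curved, and unavailable on $\mathbb{S}^n$, which is why the simplex misbehaves), and by Vinberg's theory of geometric reflection groups the stabiliser of any point of $\mathbb{X}^n$ is conjugate to $\Gamma_{T(x)}$ for some $x\in\mathcal{P}$, where $T(x)$ is the set of facets through $x$ --- a simplex of $K_\mathcal{P}$ by definition. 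A torsion element of $\ker\lambda$ is then conjugate into some such $\Gamma_{T(x)}$, and your abelian-image computation concludes exactly as you wrote it. Two smaller points: your assertion that an element of $\Gamma(\mathcal{P})$ has a fixed point in $\mathbb{X}^n$ if and only if it is torsion also fails on $\mathbb{S}^n$ (antipodal map again); for freeness you only need the implication ``fixed point $\Rightarrow$ torsion'', which proper discontinuity provides. And any complete treatment must either restrict the torsion-freeness claim to $\mathbb{E}^n$ and $\mathbb{H}^n$ or handle $\mathbb{S}^n$ separately, where $\ker\lambda$ may contain torsion that nonetheless acts freely.
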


If the dimension of the vector space $W$ is minimal, i.e. equal to the maximum number of vertices in the simplices of $\mathcal{P}^*$, $\mathcal{M}_\lambda$ is called a \textit{small cover} of $\mathcal{P}$. This is equivalent to $\text{rnk} \, \lambda =n$.

\begin{rem}\label{rem:manifold cover tesselation}
If $\lambda:\Gamma \to \mathbb{Z}_2 ^m$ is surjective, $M_\lambda$ is homeomorphic to the topological manifold $(\mathcal{P} \times \mathbb{Z}_2 ^m)/_{\sim}$, where
\begin{equation}\label{colouring 2}
    (p,g) \sim (q,h) \Leftrightarrow p=q \text{ and } g - h \in W_p,
\end{equation}
with $W_p = \lambda(\mathrm{Stab}_\Gamma(f))$ being the subspace of $W = \mathbb{Z}_2 ^k$ generated by $\lambda(F_{i_1})$, \ldots, $\lambda(F_{i_l})$ for $f= F_{i_1} \cap \ldots \cap F_{i_l}$ such that $p\in \mathrm{int}(f)$. If $p\in\mathrm{int}(\mathcal{P})$, we put $W_p=\{0\}$.

More practically, this means that $(\mathcal{P} \times \mathbb{Z}_2 ^k)/_{\sim}$ can be obtained from $\mathcal{P} \times \mathbb{Z}_2 ^k$ by identifying distinct copies $F\times \{g\}$ and $F\times \{h\}$ through the reflection on $F \in \mathcal{F}$ (which acts on the orbifold $\mathcal{P}$ as the identity) whenever $g-h =\lambda(F)$. It is clear then that $M_\lambda$ can be tessellated by $2^k$ copies of $\mathcal{P}$.

\end{rem}

\begin{defn}
Let $\mathrm{Sym}(\mathcal{P})$ be the group of symmetries of a polytope $\mathcal{P}\subset \mathbb{X}^n$, which acts on the set of facets $\mathcal{F}$ as a permutation. Two $W$--colourings $\lambda,\mu:\mathcal{F}\to W$ of $\mathcal{P}$ are called \textit{equivalent} if there exists a combinatorial symmetry $s \in \mathrm{Sym}(\mathcal{P})$, acting as a permutation on $\mathcal{F}$, and an invertible linear transformation $m \in \mathrm{GL}(W)$ such that $\lambda = m \circ \mu \circ s$. 
\end{defn}

It is easy to see that $DJ$-equivalent proper colourings of a polytope $\mathcal{P} \subset \mathbb{X}^n$ define isometric manifolds. The converse is also known to be true for orientable manifold covers of the $3$-cube \cite[Proposition 3.2]{FKS} and orientable small covers of compact, hyperbolic $3$-polytopes \cite[Theorem 3.13]{Vesnin}.


\subsection{Orientable colourings and orientable extensions}

We say that a (not necessarily proper) $\mathbb{Z}_2^k$--colouring $\lambda$ is \textit{orientable} if the orbifold $M_\lambda$ is. We have the following criterion to determine if a colouring is orientable:

\begin{prop}[\cite{KMT}, Lemma 2.4]\label{orientability}
$M_\lambda$ is orientable if and only if $\lambda$ is equivalent to a colouring that assigns to each facet a colour with an odd number of entries 1.
\end{prop}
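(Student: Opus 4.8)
The plan is to reduce this topological statement about $M_\lambda$ to a purely linear-algebraic condition over $\mathbb{Z}_2$ by means of the orientation character of the reflection group $\Gamma(\mathcal{P})$. Recall that $M_\lambda$ is the quotient orbifold $\mathbb{X}^n/\ker\lambda$, tessellated by the $2^k$ copies of $\mathcal{P}$ indexed by $W=\mathbb{Z}_2^k$ (Remark~\ref{rem:manifold cover tesselation}), and that it is orientable precisely when $\ker\lambda$ acts on $\mathbb{X}^n$ by orientation-preserving isometries. So the first step is to record the orientation homomorphism $\omega\colon\Gamma(\mathcal{P})\to\mathbb{Z}_2$, the restriction of the orientation character of $\mathrm{Isom}(\mathbb{X}^n)$, which is determined by $\omega(r_F)=1$ for each facet $F$ since every reflection $r_F$ reverses the orientation of $\mathbb{X}^n$. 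Then $M_\lambda$ is orientable if and only if $\ker\lambda\subseteq\ker\omega$.

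Next I would turn this inclusion into a factorization. Since $\lambda\colon\Gamma(\mathcal{P})\to W$ is surjective, $\ker\lambda\subseteq\ker\omega$ holds if and only if $\omega$ descends through $\lambda$, i.e. there is a linear functional $\phi\in W^\ast=(\mathbb{Z}_2^k)^\ast$ with $\omega=\phi\circ\lambda$. Evaluating on the generators $r_F$, this says exactly that $\phi(\lambda(F))=1$ for every facet $F\in\mathcal{F}$. Thus $M_\lambda$ is orientable iff there is a functional $\phi\in W^\ast$ taking the value $1$ on every colour $\lambda(F)$; note that such a $\phi$ is automatically nonzero, as the colours span $W$ by surjectivity.

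Finally I would identify the ``odd number of entries $1$'' condition with the special functional $\sigma(x_1,\dots,x_k)=x_1+\cdots+x_k$, since a vector of $\mathbb{Z}_2^k$ has an odd number of nonzero entries iff $\sigma$ evaluates to $1$ on it. For the backward direction, if $\lambda$ is equivalent to a colouring $\mu$ all of whose colours have an odd number of $1$'s, then $\sigma(\mu(F))=1$ for every facet, so $M_\mu$ is orientable by the criterion above (with $\phi=\sigma$); since the $\mathrm{GL}(W)$-part of the equivalence leaves $\ker\lambda$ unchanged and the $\mathrm{Sym}(\mathcal{P})$-part induces an isometry of the quotient, $M_\lambda$ and $M_\mu$ agree up to isometry, and hence $M_\lambda$ is orientable. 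For the forward direction, suppose $M_\lambda$ is orientable and take $\phi\ne 0$ as above. Because $\mathrm{GL}(W)$ acts transitively on the nonzero functionals of $W^\ast$ (the dual of its transitive action on $W\setminus\{0\}$), there is $m\in\mathrm{GL}(W)$ with $\sigma\circ m=\phi$. Setting $\mu=m\circ\lambda$, an equivalent colouring with $s=\mathrm{id}$, we get $\sigma(\mu(F))=\sigma(m(\lambda(F)))=\phi(\lambda(F))=1$ for all $F$, so every colour of $\mu$ has an odd number of $1$'s.

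The main thing to be careful about is the geometric input of the first two steps rather than the linear algebra, which is routine. Specifically, I need the clean identification $M_\lambda=\mathbb{X}^n/\ker\lambda$ together with the statement that orientability of this quotient is equivalent to $\ker\lambda$ acting by orientation-preserving isometries, in the possibly non-proper (genuinely orbifold) case and not only for manifolds. I would justify this through the tessellation of Remark~\ref{rem:manifold cover tesselation}: orienting $M_\lambda$ amounts to choosing a sign for each of the $2^k$ copies of $\mathcal{P}$ so that two copies glued along a shared facet $F$ by the orientation-reversing reflection $r_F$ receive opposite signs, and the consistency of such a choice over all gluings is exactly the existence of the functional $\phi$ above. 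This recovers the same criterion independently of the group-quotient description, which also reassures that the argument is insensitive to whether $\lambda$ is proper.
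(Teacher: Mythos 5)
Your proof is correct, but note that the paper itself gives no argument for this proposition: it is imported verbatim from \cite{KMT} (Lemma 2.4), so there is no internal proof to compare against. Your route --- the orientation character $\omega\colon\Gamma(\mathcal{P})\to\mathbb{Z}_2$ sending every facet reflection to $1$, the factorization $\omega=\phi\circ\lambda$ through the surjection $\lambda$, and transitivity of $\mathrm{GL}(W)$ on nonzero functionals to normalize $\phi$ to the coordinate-sum functional $\sigma$ --- is exactly the standard proof, and is essentially the one in the cited source. The closing paragraph justifying orientability via consistent signs on the $2^k$ copies of $\mathcal{P}$ in the tessellation of Remark~\ref{rem:manifold cover tesselation} is a worthwhile addition here, since this paper states the result for not-necessarily-proper colourings, where $\mathcal{M}_\lambda$ is only an orbifold and the bare covering-space argument needs that extra word of justification.
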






Thus, when looking for orientable colourings, up to equivalence, we can restrict our attention to colours with an odd number of entries 1, that is, the elements of
$$\mathbb{Z}_2^{k,\mathrm{or}}=\left\{v \in \mathbb{Z}_2^k \Bigr\vert \sum_{i=1}^k v_i = 1 \right\}.$$ We say that a colour is \textit{orientable} if it's an element of some $\mathbb{Z}_2^{k,\mathrm{or}}$, and \textit{non-orientable} otherwise.

If we restrict ourselves to use only orientable colours, checking for properness can be done more easily than in the general setting. Indeed, we have the following:

\begin{lemma}
An orientable colouring is proper if and only if for all $\sigma=\{i_0, \ldots, i_s\} \in K$ simplices with $s$ odd, $\lambda(i_0)+ \ldots +\lambda(i_s)\neq 0$. 
\end{lemma}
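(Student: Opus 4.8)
The plan is to reduce properness — a statement about linear independence of the colour vectors on every simplex — to the single numerical condition in the statement, exploiting the fact that all orientable colours lie on the affine hyperplane where the coordinate-sum functional equals $1$. Write $\epsilon : \matZ_2^k \to \matZ_2$ for the linear functional $\epsilon(v) = \sum_{i=1}^k v_i$, so that a colour is orientable precisely when $\epsilon(v) = 1$; the point is that $\epsilon$ is additive over $\matZ_2$, so it controls the cardinality parity of any zero-sum set of orientable colours.

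The forward implication is immediate and uses neither orientability nor the parity restriction: if $\lambda$ is proper, then for any simplex $\sigma = \{i_0, \ldots, i_s\} \in K$ the vectors $\lambda(i_0), \ldots, \lambda(i_s)$ are linearly independent, so the all-ones combination $\lambda(i_0) + \cdots + \lambda(i_s)$ is a nontrivial relation and hence nonzero. In particular this holds for those $\sigma$ with $s$ odd, which is exactly the asserted condition.

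For the converse I would argue by contrapositive. Suppose $\lambda$ is not proper, so there is a simplex $\tau \in K$ on which the colours are linearly dependent. Over $\matZ_2$ a dependence is the same as a nonempty subset $T$ of the vertices of $\tau$ with $\sum_{j \in T}\lambda(j)=0$; fix such a $T$. Applying $\epsilon$ and using additivity gives $0 = \epsilon\bigl(\sum_{j\in T}\lambda(j)\bigr) = \sum_{j \in T}\epsilon(\lambda(j)) = |T| \bmod 2$, since each colour is orientable. Hence $|T|$ is even, and being nonempty it has the form $|T| = s+1$ with $s$ odd. Because $K$ is a simplicial complex, the subset $T \subseteq \tau$ is itself a simplex of $K$; writing $T = \{i_0, \ldots, i_s\}$ we obtain a simplex with $s$ odd and $\lambda(i_0)+\cdots+\lambda(i_s)=0$, contradicting the hypothesis.

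The only real content sits in the converse, and the step to get right is the passage from an arbitrary linear dependence on $\tau$ to one supported on an \emph{even}-cardinality face of $K$: this is precisely where orientability (through the parity functional $\epsilon$) and the closure of $K$ under taking faces work together. Everything else is routine $\matZ_2$ linear algebra, so I do not anticipate a genuine obstacle beyond keeping the index bookkeeping ($s+1$ vertices, $s$ odd) consistent with the convention used in the statement.
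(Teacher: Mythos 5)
Your proof is correct and follows essentially the same route as the paper: both arguments rest on the parity observation that an odd number of orientable colours cannot sum to zero, combined with the fact that any $\matZ_2$-dependence is a zero-sum subset which, by closure of $K$ under faces, is itself a simplex. Your write-up is in fact slightly more explicit than the paper's (which leaves the closure-under-faces step implicit), but the mathematical content is identical.
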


\begin{proof}
If $\lambda(i_0)+ \ldots +\lambda(i_s)\neq 0$ for every simplex $\sigma=\{i_0, \ldots ,i_s\} \in K$, then the colouring is proper. But this sums are always non-zero for $s$ even, since any sum of an odd number or $\mathbb{Z}_2$--vectors with an odd number of entries 1 cannot be null. It suffices then to request that the sums are non-zero for $s$ odd.
\end{proof}

In dimension 3, this means that an orientable colouring is proper if and only if adjacent faces have different colours, due to the fact that we don't need to check for properness on the triangles dual to vertices of the polytope. We can then forget about the triangles and consider only the $1$-skeleton of $K_\mathcal{P}$. This is a graph, and it will be denoted by $G(\mathcal{P})$.

As such, any $2^{k-1}$-colouring on the vertices of $G(\mathcal{P})$ together with a bijection $\eta$ sending the vertex colours (numbers from $1$ to $2^{k-1}$) into distinct vectors of $\mathbb{Z}_2^{k,\mathrm{or}}$ will give us a proper orientable rank $k$ colouring of $\mathcal{P}$. 

The colouring equivalence under the polytopes, however, is finer than the one in the graphs. Indeed, let $$GL^\mathrm{or}_k(\mathbb{Z}_2)= \{ g \in GL_k(\mathbb{Z}_2) \mid g(\mathbb{Z}_2^{k,\mathrm{or}})=\mathbb{Z}_2^{k,\mathrm{or}}\} $$
be the group of isomorphisms of $\mathbb{Z}_2^k$ that preserves orientable colours (that is, matrices with orientable vectors as columns). Generally, the permutation of colours induced by $GL^\mathrm{or}_k(\mathbb{Z}_2)$ through $\eta^{-1}$ into $S_{2^{k-1}}$ might not deliver the whole permutation group. On small covers, whoever, $GL^\mathrm{or}_3(\mathbb{Z}_2)\cong S_4$ and we have the following equivalence:

$$ \faktor{\{\, \text{orientable small covers of} \, \mathcal{P} \subset \mathbb{H}^3\}}{\sim} \Leftrightarrow  \faktor{\{\, 4\text{-colourings of} \, G(\mathcal{P})\}}{\sim}.$$


\subsection{Colouring Extensions}\label{subsection:colouring extensions}

We briefly recall the following definition from \cite[Section 7.2]{FKR}:

\begin{defn}
Let $\lambda:\mathcal{F}\to \mathbb{Z}_2 ^k$ be any colouring. A (surjective) colouring $\mu:\mathcal{F}\to \mathbb{Z}_2 ^{k+1}$ is called \textit{an extension of} $\lambda$ if there is a linear projection $p:\mathbb{Z}_2 ^{k+1}\to \mathbb{Z}_2 ^k$ such that $\lambda=p\circ\mu$.
\end{defn}

\cite[Proposition 7.2]{FKR} gives us the following:

\begin{prop}\label{prop:double-cover colouring}
Let $\lambda:\mathcal{F}\to \mathbb{Z}_2 ^k$ be any colouring and $\mu:\mathcal{F}\to \mathbb{Z}_2 ^{k+1}$ its extension. Then $\mathcal{M}_\mu$ double-covers $\mathcal{M}_\lambda$. Moreover, if $\lambda$ is proper or orientable, so is $\mu$.
\end{prop}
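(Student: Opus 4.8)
The plan is to work at the level of the right-angled Coxeter group $\Gamma = \Gamma(\mathcal{P})$, viewing both colourings as the associated surjective homomorphisms $\lambda:\Gamma\to\mathbb{Z}_2^{k}$ and $\mu:\Gamma\to\mathbb{Z}_2^{k+1}$, so that $\mathcal{M}_\lambda=\mathbb{X}^n/\ker\lambda$ and $\mathcal{M}_\mu=\mathbb{X}^n/\ker\mu$. The defining relation $\lambda=p\circ\mu$ gives at once $\ker\mu\subseteq\ker\lambda$, since $\mu(g)=0$ implies $\lambda(g)=p(\mu(g))=0$. Both colourings being surjective, $[\Gamma:\ker\lambda]=2^{k}$ and $[\Gamma:\ker\mu]=2^{k+1}$, so by the tower law $[\ker\lambda:\ker\mu]=2$. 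As $\ker\mu<\ker\lambda<\Gamma$ both act properly discontinuously on $\mathbb{X}^n$, this index-$2$ inclusion realises $\mathcal{M}_\mu$ as a double cover (of orbifolds, and of manifolds once properness is known) of $\mathcal{M}_\lambda$, proving the first assertion.

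For properness I would argue by contrapositive on each simplex $\sigma=\{i_0,\dots,i_s\}$ of $K_\mathcal{P}$. Suppose $\mu(i_0),\dots,\mu(i_s)$ are linearly dependent, say $\sum_j c_j\,\mu(i_j)=0$ with $c_j\in\mathbb{Z}_2$ not all zero. Applying the linear map $p$ and using $\lambda=p\circ\mu$ yields $\sum_j c_j\,\lambda(i_j)=0$, the very same nontrivial relation among the $\lambda(i_j)$. Hence linear dependence of $\mu$ on a simplex forces linear dependence of $\lambda$ on that simplex, so properness of $\lambda$ on every simplex propagates to properness of $\mu$.

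For orientability I would use the orientation character $\omega:\Gamma\to\mathbb{Z}_2$ sending each facet reflection to $1$, whose kernel is the orientation-preserving subgroup $\Gamma^{+}$. The orbifold $\mathcal{M}_\lambda$ is orientable exactly when $\ker\lambda\subseteq\Gamma^{+}=\ker\omega$, equivalently when $\omega$ factors as $\omega=\epsilon\circ\lambda$ for a linear functional $\epsilon:\mathbb{Z}_2^{k}\to\mathbb{Z}_2$; concretely $\epsilon(\lambda(F))=1$ for every facet $F$, which is precisely the criterion of Proposition~\ref{orientability}. Given such an $\epsilon$, the composite $\epsilon\circ p:\mathbb{Z}_2^{k+1}\to\mathbb{Z}_2$ satisfies $(\epsilon\circ p)(\mu(F))=\epsilon(\lambda(F))=1$ for all $F$, so it witnesses the orientability of $\mu$ by the same criterion.

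I expect the only genuinely delicate point to be this last step: one must first recast Proposition~\ref{orientability} in the functional (orientation-character) form, so that the criterion transfers cleanly through $p$. If instead one wishes to keep Proposition~\ref{orientability} verbatim—"each facet receives an odd-weight colour up to equivalence"—then after normalising $\lambda$ one would invoke transitivity of $GL_{k+1}(\mathbb{Z}_2)$ on nonzero functionals to replace $\mu$ by an equivalent odd-weight colouring, at the cost of tracking the colouring-equivalence bookkeeping. Everything else follows directly from the single relation $\lambda=p\circ\mu$.
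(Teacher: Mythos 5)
Your proof is correct, so there is nothing to patch; but note that the paper does not actually prove this proposition --- it imports it verbatim from \cite[Proposition 7.2]{FKR} --- so your argument is being compared against a citation, not an internal proof, and it serves as a valid self-contained replacement. All three steps check out: since $\lambda = p\circ\mu$ holds on the generating reflections, it holds as homomorphisms $\Gamma(\mathcal{P})\to\mathbb{Z}_2^k$, giving $\ker\mu\subseteq\ker\lambda$ with index $2^{k+1}/2^{k}=2$ (surjectivity of both colourings is guaranteed by the paper's standing convention and by the definition of an extension); a nontrivial relation $\sum_j c_j\,\mu(i_j)=0$ on a simplex pushes forward under the linear map $p$ to the same nontrivial relation among the $\lambda(i_j)$, so properness transfers; and your orientation-character formulation of Proposition~\ref{orientability} is legitimate, since transitivity of $GL_k(\mathbb{Z}_2)$ on nonzero functionals identifies ``equivalent to an odd-weight colouring'' with ``there exists a functional $\epsilon$ with $\epsilon(\lambda(F))=1$ for all facets $F$,'' i.e.\ with $\omega=\epsilon\circ\lambda$. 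One simplification worth noting: the step you flag as delicate can be bypassed entirely. Once orientability of $\mathcal{M}_\lambda$ is recast as $\ker\lambda\subseteq\ker\omega$ for the orientation character $\omega:\Gamma(\mathcal{P})\to\mathbb{Z}_2$, orientability of $\mathcal{M}_\mu$ follows immediately from your first step via $\ker\mu\subseteq\ker\lambda\subseteq\ker\omega$; there is no need to construct the witnessing functional $\epsilon\circ p$ or to track any colouring-equivalence bookkeeping.
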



\subsection{Symmetries of colourings}\label{subsection:symmetries}
Given a (not necessarily proper) colouring $\lambda$ of a right-angled polytope $\mathcal{P}\subset\mathbb{X}^n$, there is a natural group of isometries of the associated orbifold $\mathcal{M}_{\lambda}$ called its \textit{coloured isometry group}. We also recall its definition from \cite[Section 2.1]{KS?}.

\begin{defn}\label{def:admissible-symmetries}
Let $\lambda$ be a $W$--colouring of $\mathcal{P}$. A symmetry $\varphi$ of $\mathcal{P}$ is \textit{admissible} with respect to $\lambda$ if:
\begin{enumerate}
    \item the maps $\varphi$ induces a permutation of the colours assigned by $\lambda$ to the facets of $\mathcal{P}$,
    \item such permutation is realised by an invertible linear automorphism in $\mathrm{GL}(W)$. 
    \end{enumerate}
\end{defn}

Admissible symmetries are easily seen to form a subgroup of the symmetry group of $\mathcal{P}$, called \textit{coloured symmetry group}, which we denote by $\mathrm{Sym}_{\lambda}(\mathcal{P})$,  and there is a naturally defined homomorphism $\Psi: \mathrm{Sym}_{\lambda}(\mathcal{P}) \rightarrow \mathrm{GL}(W)$.

Recall that a colouring $\lambda:\Gamma(\mathcal{P})\rightarrow W$  defines a regular orbifold cover $\pi: \mathcal{M}_{\lambda}\rightarrow \mathcal{P}$ with automorphism group $W$. The coloured isometry group $\mathrm{Isom}_c(\mathcal{M}_{\lambda})$ is defined as the group of symmetries of $\mathcal{M}_{\lambda}$ which are lifts of admissible symmetries of $\mathcal{P}$. We have that  
\begin{equation}\label{eq:semidirect_product}
    \mathrm{Isom}_c(\mathcal{M}_{\lambda})\cong \mathrm{Sym}_{\lambda}(\mathcal{P})\ltimes W,
\end{equation}
where the action of $\mathrm{Sym}_{\lambda}(\mathcal{P})$ on $W$ is precisely the one induced by the homomorphism $\Psi$ \cite[Section 2.1]{KS?}. 


\subsection{Computing the homology of colourings.} Given a right-angled polytope $\mathcal{P} \subset \mathbb{X}^n$ with an $\mathbb{Z}^k_2$--colouring $\lambda$, let us enumerate the facets $\mathcal{F}$ of $\mathcal{P}$ in some order. Then, we may think that $\mathcal{F} = \{ 1, 2, \ldots, m \}$. Let $\Lambda$ be the \textit{defining matrix} of $\lambda$, that consists of the column vectors $\lambda(1), \ldots, \lambda(m)$ exactly in this order. Then $\Lambda$ is a matrix with $k$ rows and $m$ columns. More precisely, $\Lambda$ is the abelianization of $\lambda$, that is, the map such that $\Lambda \circ \mathrm{ab}=\lambda$, where $\mathrm{ab}:\Gamma \to \mathbb{Z}_2 ^m$ is the abelianization map that takes $r_i$, the reflection of the facet $i$, to $e_i$. 

Let $\mathrm{Row}(\Lambda)$ denote the row space of $\Lambda$, while for a vector $\omega \in \mathrm{Row}(\Lambda)$ let $K_\omega$ be the simplicial subcomplex of the complex $K=K_\mathcal{P}$ spanned by the vertices $i$ (also labelled by the elements of $\{1, 2, \ldots, m\}$) such that the $i$--th entry of $\omega$ equals $1$.

The rational cohomology of $\mathcal{M}_{\lambda}$ can be computed through the following formula, due to  \cite[Theorem 1.1]{CP2} by taking $R=\mathbb{Q}$:

\begin{equation}\label{eq:cohomology}
H^p(\mathcal{M}_{\lambda},\mathbb{Q})\cong \underset{\omega \in \mathrm{Row}(\Lambda)}{\bigoplus}\widetilde{H}^{p-1}(K_{\omega},\mathbb{Q}). 
\end{equation}

Moreover, the cup product structure is given by the maps \cite[Main Theorem]{CP2}:

\begin{equation}\label{eq:cup products}
\widetilde{H}^{p-1}(K_{\omega_1},\mathbb{Q}) \otimes \widetilde{H}^{q-1}(K_{\omega_2},\mathbb{Q}) \mapsto \widetilde{H}^{p+q-1}(K_{\omega_1 + \omega_2},\mathbb{Q}). 
\end{equation}

One practical application of Equation \ref{eq:cohomology} is the computation of Betti numbers of colourings of $n$-cubes, which turns out to be very simple.

\begin{cor}[\cite{Ferrari}, Proposition 5.1.11]\label{cor:Ferrari}
Let $\mathcal{C}$ be an $n$-cube with opposing facets labelled with consecutive numbers $\{2i-1,2i\}$ for $i \in [n]$, and $\lambda$ be any colouring of $\mathcal{C}$. Finally, let $T_j\subset \mathbb{Z}_2^{2n}$, $0\le j \le n$, be the sets such that
\begin{align*}
    T_1 =&\{e_1+e_2, \ldots, e_{2i-1}+e_{2i}, \ldots, e_{2n-1}+e_{2n}\} \\
    T_j =&\big\{x_1+\ldots+x_j \,\big|\, \{x_1, \ldots, x_j\} \subset T_1\big\}.
\end{align*}
For any $j \in \{1, \ldots, n\}$, we have the following:
$$\beta_j ^\mathbb{Q} (\mathcal{M}_\lambda)=\big|\mathrm{Row} (\Lambda) \cap T_j\big|. $$
\end{cor}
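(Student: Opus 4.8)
The plan is to feed the cohomology formula \eqref{eq:cohomology} the precise homotopy type of each full subcomplex $K_\omega$, which for the $n$-cube is especially rigid. First I would identify $K_{\mathcal C}$, the boundary complex of the dual of the $n$-cube, as the boundary of the $n$-dimensional cross-polytope. Since opposite facets $2i-1,2i$ of the cube are disjoint while any transversal choosing one facet from each pair meets in a vertex, $K_{\mathcal C}$ is the iterated join $\{1,2\}*\{3,4\}*\cdots*\{2n-1,2n\}$ of $n$ pairs of vertices, each pair carrying no edge. Consequently, for any $\omega\in\mathbb{Z}_2^{2n}$ the full subcomplex $K_\omega$ on the vertex set $S_\omega=\{i : \omega_i=1\}$ splits as a join $L_1*\cdots*L_n$, where $L_i$ is the induced complex on $S_\omega\cap\{2i-1,2i\}$.

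The second step is to read off $\widetilde{H}_*(K_\omega)$ from this decomposition using the join identity $\widetilde{H}_m(A*B)\cong\bigoplus_{p+q=m-1}\widetilde{H}_p(A)\otimes\widetilde{H}_q(B)$ over the field $\mathbb{Q}$. Each factor $L_i$ falls into exactly three cases: if $\omega_{2i-1}=\omega_{2i}=0$ then $L_i$ is the empty complex, which is the unit for the join and carries $\widetilde{H}_{-1}=\mathbb{Q}$; if exactly one entry is $1$ then $L_i$ is a single vertex, hence contractible, and the join with a point is a cone, killing all reduced homology of $K_\omega$; and if $\omega_{2i-1}=\omega_{2i}=1$ then $L_i\cong S^0$ with $\widetilde{H}_0=\mathbb{Q}$. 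Therefore $K_\omega$ has nonvanishing reduced homology precisely when every block contributes either $\emptyset$ or $S^0$, i.e. when $\omega$ has equal entries on each pair $\{2i-1,2i\}$; this is exactly the condition $\omega\in\bigcup_{j=0}^n T_j$, with $\omega\in T_j$ meaning that exactly $j$ blocks are doubly occupied. In that case $K_\omega$ is the join of $j$ copies of $S^0$, namely $S^{j-1}$, so $\widetilde{H}_{j-1}(K_\omega;\mathbb{Q})\cong\mathbb{Q}$ and all other reduced groups vanish.

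Finally I would substitute this into \eqref{eq:cohomology}. Over $\mathbb{Q}$ reduced cohomology and homology have equal dimensions, so for each $j\ge 1$ the dimension $\dim_\mathbb{Q}\widetilde{H}^{j-1}(K_\omega;\mathbb{Q})$ equals $1$ when $\omega\in T_j$ and $0$ otherwise. Summing over $\omega\in\mathrm{Row}(\Lambda)$ then yields $\beta_j^{\mathbb{Q}}(\mathcal{M}_\lambda)=\dim_\mathbb{Q} H^j(\mathcal{M}_\lambda;\mathbb{Q})=\big|\mathrm{Row}(\Lambda)\cap T_j\big|$, as claimed. The one point demanding care is the bookkeeping in the join formula, especially that the empty complex behaves as a multiplicative unit through its class in degree $-1$ and that a single extra vertex in any block collapses $K_\omega$ to a contractible space; once these are pinned down the degree shift $(S^0)^{*j}=S^{j-1}$ and the final count follow immediately.
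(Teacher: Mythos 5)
Your argument is correct and complete: the identification of $K_{\mathcal{C}}$ with the boundary of the cross-polytope, the fact that full subcomplexes of a join are joins of full subcomplexes, and the Milnor join formula (with the convention $\widetilde{H}_{-1}(\emptyset;\mathbb{Q})\cong\mathbb{Q}$ making the empty complex the unit and a single vertex a cone point) together pin down $\widetilde{H}^{*}(K_\omega;\mathbb{Q})$ exactly as you state, and feeding this into Equation~\eqref{eq:cohomology} gives the count. Note that the paper itself offers no proof here --- it cites Proposition 5.1.11 of the author's thesis --- so your write-up cannot be compared line-by-line with a printed argument, but it is the natural (and almost certainly the intended) one, and it stands on its own.
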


We conclude this section by proving the following useful proposition.

\begin{cor}\label{prop:orientability_criterion}
Let $\mathcal{P} \subset \mathbb{X}^n$ be a compact, right-angled polytope with a proper colouring $\lambda$. The manifold $\mathcal{M}_{\lambda}$ is orientable if and only if the vector $\varepsilon=(1,\dots,1)$ belongs to $\mathrm{Row}(\Lambda)$. 
\end{cor}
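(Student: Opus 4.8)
The plan is to reduce everything to the orientability criterion of Proposition~\ref{orientability} by translating the phrase ``every facet receives a colour with an odd number of entries~$1$'' into a purely linear condition on the defining matrix. Write $u=(1,\dots,1)\in\mathbb{Z}_2^k$ for the all-ones row vector, viewed as the parity functional on $\mathbb{Z}_2^k$. For any colouring $\nu$ with defining matrix $\Lambda_\nu$, the $j$-th entry of the row vector $u\,\Lambda_\nu$ equals $\sum_{i=1}^k(\Lambda_\nu)_{ij}$, i.e.\ the parity of the weight of the colour $\nu(j)$. Hence $\nu$ assigns an odd-weight (orientable) colour to every facet if and only if $u\,\Lambda_\nu=\varepsilon$; in particular, whenever this holds one has $\varepsilon\in\mathrm{Row}(\Lambda_\nu)$, since $\varepsilon$ is then literally a combination of the rows of $\Lambda_\nu$.

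Next I would record that the condition ``$\varepsilon\in\mathrm{Row}(\Lambda)$'' is invariant under colouring equivalence. A change of colour basis $m\in\mathrm{GL}(W)$ replaces $\Lambda$ by $M\Lambda$ for an invertible $M\in GL_k(\mathbb{Z}_2)$, which leaves the row space unchanged; a combinatorial symmetry $s\in\mathrm{Sym}(\mathcal{P})$ replaces $\Lambda$ by $\Lambda P$ for a permutation matrix $P$, which merely permutes the coordinates of $\mathrm{Row}(\Lambda)$ while fixing the symmetric vector $\varepsilon$. Thus two equivalent colourings have defining matrices related by $\Lambda_\mu=M\Lambda P$, and $\varepsilon\in\mathrm{Row}(\Lambda)$ depends only on the equivalence class of $\lambda$.

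Finally I would run both implications through Proposition~\ref{orientability}. For necessity, if $\mathcal{M}_\lambda$ is orientable then $\lambda$ is equivalent to some $\mu$ all of whose colours have odd weight; writing $\Lambda_\mu=M\Lambda P$, the identity $u\,\Lambda_\mu=\varepsilon$ becomes $(uM)\,\Lambda=\varepsilon P^{-1}=\varepsilon$, so $\varepsilon\in\mathrm{Row}(\Lambda)$. For sufficiency, if $\varepsilon\in\mathrm{Row}(\Lambda)$ I choose a (necessarily nonzero) row vector $c$ with $c\,\Lambda=\varepsilon$; since $GL_k(\mathbb{Z}_2)$ acts transitively on nonzero vectors, I can pick $M\in GL_k(\mathbb{Z}_2)$ with $uM=c$, and then $u(M\Lambda)=c\,\Lambda=\varepsilon$ shows that the equivalent colouring $M\circ\lambda$ assigns an odd-weight colour to every facet, whence $\mathcal{M}_\lambda$ is orientable by Proposition~\ref{orientability}. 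The only step demanding real care is this last matching of a row-space witness $c$ with an honest change of basis $M$, which rests on transitivity of $GL_k(\mathbb{Z}_2)$ on nonzero vectors; all of the topological content is already packaged in Proposition~\ref{orientability}, while the hypotheses that $\mathcal{P}$ be compact and right-angled and $\lambda$ proper serve only to ensure, via Proposition~\ref{DJ}, that $\mathcal{M}_\lambda$ is a closed manifold for which orientability is the intended notion.
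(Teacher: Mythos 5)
Your proof is correct, but its sufficiency direction takes a genuinely different route from the paper's. For necessity both arguments rest on Proposition~\ref{orientability}; you are in fact more careful than the paper, which simply asserts that ``the sum of all lines of $\Lambda$ is $\varepsilon$'' --- literally true only after passing to an equivalent odd-weight colouring --- whereas you verify explicitly that the condition $\varepsilon\in\mathrm{Row}(\Lambda)$ is invariant under colouring equivalence ($\Lambda\mapsto M\Lambda P$ preserves the row space up to a coordinate permutation fixing $\varepsilon$). For sufficiency the paper stays inside the cohomological framework it has just set up: since $\mathcal{P}$ is compact, $K_\varepsilon=K$ is homotopy equivalent to $\mathbb{S}^{n-1}$, so Equation~\ref{eq:cohomology} gives $H^n(\mathcal{M}_{\lambda},\mathbb{Q})\cong\widetilde{H}^{n-1}(K_\varepsilon,\mathbb{Q})\cong\mathbb{Q}$, and a closed $n$-manifold with non-vanishing top rational cohomology must be orientable. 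You instead convert the row-space witness $c$ (with $c\,\Lambda=\varepsilon$) into an honest change of basis $M$ satisfying $uM=c$, using transitivity of $GL_k(\mathbb{Z}_2)$ on nonzero row vectors, so that the equivalent colouring $M\circ\lambda$ has all colours of odd weight, and you then apply Proposition~\ref{orientability} once more. Your argument is purely linear-algebraic: it avoids the Choi--Park formula entirely, exhibits the corollary as a direct reformulation of the KMT criterion, and does not actually use compactness of $\mathcal{P}$ (which the paper needs in order to identify $K_\varepsilon$ with a sphere), only that Proposition~\ref{orientability} applies to $\mathcal{M}_{\lambda}$. What the paper's proof buys in exchange is brevity given the machinery already in place, and consistency with the subcomplex-cohomology viewpoint that drives the rest of the section.
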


\begin{proof}
By Proposition \ref{orientability}, if $\lambda$ is orientable, then the sum of all lines of $\Lambda$ is the vector $\varepsilon$. Conversely, if $\varepsilon \in \mathrm{Row}(\Lambda)$, then $K_{\varepsilon}=K\sim \mathbb{S}^{n-1}$ and $\widetilde{H}^{n-1}(K_{\varepsilon},\mathbb{Q})\cong \mathbb{Q} \Rightarrow H^n(\mathcal{M}_{\lambda},\mathbb{Q})\cong \mathbb{Q}$ by Equation \ref{eq:cohomology}. We thus must have that $\mathcal{M}_{\lambda}$ is orientable.
\end{proof}


\section{Rational Homology 3-Spheres}\label{sec:3-qhs}

A manifold $X$ is said to be a \textit{homology $n$-sphere} (or $HS^n$) if $H_i(X, \mathbb{Z})=H_i(\mathbb{S}^n, \mathbb{Z}), \forall i$. Similarly, it is said to be a \textit{ rational homology $n$-sphere} (or $QHS^n$) if $H_i(X, \mathbb{Q})=H_i(\mathbb{S}^n, \mathbb{Q}), \forall i$. Clearly, any $HS$ or $QHS$ manifold must be orientable. Additionally, we say a $CW$-complex is a \textit{rational homology point} (or $QHP$) if all its reduced rational homology groups are trivial.

\subsection{Subcomplexes of $QHS^3$}

By an application of Equation \ref{eq:cohomology}, we have the following:

\begin{lemma}\label{lemma:subcomplexes are HP}
An orientable $\mathcal{M}_{\lambda}$ is a $QHS^n$ if and only if, for all $K_\omega \in \mathrm{Row}(\Lambda) \setminus \{0, \varepsilon\}$, $K_\omega$ is an $QHP$.
\end{lemma}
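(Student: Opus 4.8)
The plan is to apply the Choi--Park cohomology formula \eqref{eq:cohomology} directly, reading off the condition for $\mathcal{M}_\lambda$ to be a $QHS^n$ term by term over the row space. Since $\mathcal{M}_\lambda$ is assumed orientable, Corollary~\ref{prop:orientability_criterion} tells us $\varepsilon \in \mathrm{Row}(\Lambda)$, so the summands indexed by $\omega \in \{0, \varepsilon\}$ are already accounted for: the term $\omega = 0$ gives $K_0 = \emptyset$ contributing $\widetilde{H}^{-1}$, and the term $\omega = \varepsilon$ gives $K_\varepsilon = K \simeq \mathbb{S}^{n-1}$, which together supply exactly the homology of $\mathbb{S}^n$ in degrees $0$ and $n$.

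The key step is to spell out what being a $QHS^n$ demands degree by degree. By definition $\mathcal{M}_\lambda$ is a $QHS^n$ if and only if $H^p(\mathcal{M}_\lambda, \mathbb{Q}) \cong H^p(\mathbb{S}^n, \mathbb{Q})$ for all $p$, i.e.\ the cohomology is $\mathbb{Q}$ in degrees $0$ and $n$ and vanishes otherwise. First I would isolate the two ``trivial'' summands: the $\omega = 0$ summand $\widetilde{H}^{p-1}(K_0, \mathbb{Q})$ contributes $\mathbb{Q}$ exactly in degree $p=0$, and the $\omega = \varepsilon$ summand $\widetilde{H}^{p-1}(K_\varepsilon, \mathbb{Q}) = \widetilde{H}^{p-1}(\mathbb{S}^{n-1}, \mathbb{Q})$ contributes $\mathbb{Q}$ exactly in degree $p=n$. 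These two summands alone reproduce the cohomology of $\mathbb{S}^n$. Consequently, the remaining summands over $\omega \in \mathrm{Row}(\Lambda) \setminus \{0, \varepsilon\}$ must contribute nothing: we need $\widetilde{H}^{p-1}(K_\omega, \mathbb{Q}) = 0$ for every such $\omega$ and every $p$, which is precisely the statement that each such $K_\omega$ is a $QHP$.

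The argument is an ``if and only if'' read off the direct sum, so I would carry out both directions together. If every $K_\omega$ with $\omega \notin \{0, \varepsilon\}$ is a $QHP$, then by \eqref{eq:cohomology} the cohomology in each degree collapses to the contributions of $\omega = 0$ and $\omega = \varepsilon$ computed above, giving $H^\ast(\mathcal{M}_\lambda, \mathbb{Q}) \cong H^\ast(\mathbb{S}^n, \mathbb{Q})$. Conversely, if $\mathcal{M}_\lambda$ is a $QHS^n$, then since \eqref{eq:cohomology} is a direct sum of non-negatively-graded pieces with no cancellation possible, the vanishing of $\widetilde{H}^{p-1}(K_\omega,\mathbb{Q})$ for all $\omega \notin \{0,\varepsilon\}$ is forced in every degree, so each such $K_\omega$ is a $QHP$.

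The main obstacle, such as it is, lies in checking that the two distinguished summands contribute exactly $\mathbb{Q}$ in degrees $0$ and $n$ and nothing elsewhere, so that they cannot ``accidentally'' cover for a nonzero contribution from some other $\omega$. This rests on the facts that $K_0$ is the empty (or appropriately degenerate) complex with $\widetilde{H}^{-1}(\emptyset) = \mathbb{Q}$ and all higher reduced cohomology vanishing, and that $K_\varepsilon = K_{\mathcal{P}}$ is the full boundary complex of $\mathcal{P}^\ast$, homeomorphic to $\mathbb{S}^{n-1}$, with reduced cohomology concentrated in degree $n-1$. Once these are pinned down, the claim follows immediately because distinct summands in \eqref{eq:cohomology} sit in independent direct summands of $H^p$ and cannot interfere.
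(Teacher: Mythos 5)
Your proof is correct and follows essentially the same route as the paper: both apply the Choi--Park decomposition \eqref{eq:cohomology}, identify the $\omega=0$ and $\omega=\varepsilon$ summands as supplying exactly the cohomology of $\mathbb{S}^n$ in degrees $0$ and $n$, and use the direct-sum structure to force all remaining summands to vanish. Your write-up is simply a more detailed version of the paper's (very terse) argument, with the orientability input $\varepsilon \in \mathrm{Row}(\Lambda)$ and the no-cancellation point made explicit.
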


\begin{proof}
The only non-trivial cohomology groups of $\mathcal{M}_{\lambda}$ are $H^0(\mathcal{M}_{\lambda},\mathbb{Q})\cong \widetilde{H}^{-1}(K_0,\mathbb{Q})\cong \mathbb{Q}$ and $H^n(\mathcal{M}_{\lambda},\mathbb{Q})\cong \widetilde{H}^{n-1}(K_\varepsilon,\mathbb{Q})\cong \mathbb{Q}$. Therefore, every other simplicial subcomplex $K_\omega$ must have trivial reduced homology groups.
\end{proof}

In the case of colourings of $n$-cubes, by applying Corollary \ref{cor:Ferrari}, we have that:

\begin{prop}
Let $\mathcal{C}$ be an $n$-cube labelled as in Corollary \ref{cor:Ferrari} and $T_i$ its $T$-sets. Put $\mathcal{T}=\bigcup_{i<n} T_i$. We have that $\mathcal{M}_\lambda$ is a $QHS$ if and only if $\mathrm{Row}\Lambda \cap \mathcal{T}=\emptyset$. 
\end{prop}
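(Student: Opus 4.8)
The plan is to derive the statement directly from Lemma~\ref{lemma:subcomplexes are HP} together with the Betti number formula in Corollary~\ref{cor:Ferrari}. Since $\mathcal{M}_\lambda$ is already assumed orientable (this is built into being a $QHS$, and by Corollary~\ref{prop:orientability_criterion} it is equivalent to $\varepsilon \in \mathrm{Row}(\Lambda)$), Lemma~\ref{lemma:subcomplexes are HP} tells us that $\mathcal{M}_\lambda$ is a $QHS$ precisely when every $K_\omega$ with $\omega \in \mathrm{Row}(\Lambda)\setminus\{0,\varepsilon\}$ is a $QHP$, i.e.\ has vanishing reduced rational homology in all degrees. So the whole task reduces to translating the condition ``all intermediate $K_\omega$ are rational homology points'' into the combinatorial condition $\mathrm{Row}(\Lambda)\cap\mathcal{T}=\emptyset$.

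\emph{First} I would recall what Corollary~\ref{cor:Ferrari} is really saying at the level of a single row vector. The $T$-sets are engineered so that $T_j$ collects exactly those $\omega\in\mathbb{Z}_2^{2n}$ that are sums of $j$ of the ``opposite-face pairs'' $e_{2i-1}+e_{2i}$; geometrically, such an $\omega$ has support equal to $j$ full pairs of opposite facets of the cube, and for precisely these $\omega$ the subcomplex $K_\omega$ carries nontrivial reduced cohomology (it is the boundary complex of a lower-dimensional sub-cube, hence a sphere $\mathbb{S}^{j-1}$). The formula $\beta_j^{\mathbb{Q}}(\mathcal{M}_\lambda)=|\mathrm{Row}(\Lambda)\cap T_j|$ is obtained by summing the rank-one contributions of each such $\omega$ via Equation~\ref{eq:cohomology}. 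For the cube, then, the only way an intermediate $K_\omega$ fails to be a $QHP$ is for $\omega$ to lie in some $T_j$.

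\emph{Next}, I would run the two implications. For the forward direction, suppose $\mathcal{M}_\lambda$ is a $QHS$. Then $\beta_j^{\mathbb{Q}}(\mathcal{M}_\lambda)=0$ for all $1\le j\le n-1$, so by Corollary~\ref{cor:Ferrari} we get $|\mathrm{Row}(\Lambda)\cap T_j|=0$ for each such $j$; taking the union over $j<n$ yields $\mathrm{Row}(\Lambda)\cap\mathcal{T}=\emptyset$. For the converse, assume $\mathrm{Row}(\Lambda)\cap\mathcal{T}=\emptyset$. I still have to confirm orientability, which amounts to checking that $\mathcal{M}_\lambda$ has the correct top Betti number; here I would note that $T_n=\{\varepsilon\}$ is deliberately excluded from $\mathcal{T}$, so the hypothesis is silent on $j=n$, and $\varepsilon\in\mathrm{Row}(\Lambda)$ holds because $\lambda$ is a colouring making $\mathcal{M}_\lambda$ a manifold with the appropriate orientation data (equivalently one invokes Corollary~\ref{prop:orientability_criterion}). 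Given orientability, vanishing of $\mathrm{Row}(\Lambda)\cap T_j$ for all $j<n$ forces every intermediate $K_\omega$ to be a $QHP$ (any $\omega\in\mathrm{Row}(\Lambda)\setminus\{0,\varepsilon\}$ that contributed nontrivial homology would have to land in some $T_j$ with $1\le j\le n-1$), and Lemma~\ref{lemma:subcomplexes are HP} then gives that $\mathcal{M}_\lambda$ is a $QHS$.

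The step I expect to require the most care is the bookkeeping around the endpoints $j=0$ and $j=n$: one must make sure that excluding $\varepsilon$ (the $j=n$ case) from $\mathcal{T}$ is exactly what matches the ``$\setminus\{0,\varepsilon\}$'' in Lemma~\ref{lemma:subcomplexes are HP}, and that the $j=0$ term corresponds to the trivial $K_0$ contributing only $H^0$. In other words, the genuine content is not the algebra but the verification that the combinatorial classification of ``which $K_\omega$ are spheres'' underlying Corollary~\ref{cor:Ferrari} accounts for \emph{all} possible sources of reduced homology for a cube, so that $\mathcal{T}$-avoidance is not merely necessary but sufficient. Once that correspondence is pinned down, the proposition follows immediately.
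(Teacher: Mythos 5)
Your overall strategy --- reading the $QHS$ condition off the Betti number formula of Corollary~\ref{cor:Ferrari} --- is exactly the paper's: the proposition appears there with no separate proof, as an immediate application of that corollary. Your forward direction is correct, and so is the combinatorial picture behind it: for the cube, $K$ is the boundary of the cross-polytope, a join of $n$ copies of $\mathbb{S}^0$, so $K_\omega$ is a cone (hence contractible) unless the support of $\omega$ is a union of full antipodal pairs, in which case $K_\omega\cong\mathbb{S}^{j-1}$; this is why $\mathcal{T}$-avoidance accounts for all reduced homology in degrees $1,\dots,n-1$. (Your endpoint bookkeeping is also right: one must read $\mathcal{T}$ as $T_1\cup\dots\cup T_{n-1}$, since including $T_0=\{0\}$ would make the condition vacuous.)

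The genuine gap is in the converse, at the very step you flagged and then waved away. You need $\beta_n=1$, i.e.\ $\varepsilon\in\mathrm{Row}(\Lambda)$, but your justification --- ``$\varepsilon\in\mathrm{Row}(\Lambda)$ holds because $\lambda$ is a colouring making $\mathcal{M}_\lambda$ a manifold with the appropriate orientation data'' --- is circular: orientability is precisely what is in question, and Corollary~\ref{prop:orientability_criterion} only translates it into the statement $\varepsilon\in\mathrm{Row}(\Lambda)$; it does not establish it. The hypothesis $\mathrm{Row}(\Lambda)\cap\mathcal{T}=\emptyset$ is silent about $T_n$, so a non-orientable proper colouring satisfying it would be a counterexample to the proposition: $\mathcal{M}_\lambda$ would have the rational homology of a point, not of $\mathbb{S}^n$. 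Ruling this out requires an actual argument. One fix: if $\mathcal{M}_\lambda$ were non-orientable, then $\beta_n=0$, and combined with $\beta_j=0$ for $0<j<n$ from Corollary~\ref{cor:Ferrari} this gives $\chi(\mathcal{M}_\lambda)=\beta_0=1$; but $\mathcal{M}_\lambda$ is a closed flat manifold (the right-angled $n$-cube is Euclidean), hence finitely covered by the $n$-torus, so $\chi(\mathcal{M}_\lambda)=0$, a contradiction. Alternatively, a purely combinatorial fix: by multilinearity of the determinant over $\mathbb{Z}_2$, properness (every transversal choice of one facet per opposite pair has linearly independent colours) forces the $n$ pair-sums $\lambda(F_{2i-1})+\lambda(F_{2i})$ not to span $\mathbb{Z}_2^k$; any nonzero $v$ orthogonal to all of them yields $\omega=v^{t}\Lambda\in\mathrm{Row}(\Lambda)\setminus\{0\}$ whose support is a union of full pairs, hence $\omega\in T_j$ for some $j\ge 1$, and $\mathcal{T}$-avoidance then forces $\omega=\varepsilon$. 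Either way, orientability is a consequence of the hypothesis rather than an assumption, and with it in place your appeal to Lemma~\ref{lemma:subcomplexes are HP} (or directly to the Betti numbers) completes the proof.
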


By applying Equation \ref{eq:cup products}, by the other hand, we have the following:

\begin{lemma}\label{lemma:HP condition}
Let $\mathcal{M}_{\lambda}$ be an orientable colouring and $\omega \in \mathrm{Row}(\Lambda)\setminus\{0,\epsilon\}$. Then $K_\omega$ is an $QHP$ if and only if $K_{\epsilon-\omega}$ is an $QHP$.
\end{lemma}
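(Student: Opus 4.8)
The plan is to exploit the Poincar\'e--Lefschetz-style duality that is implicit in the cohomology formula \eqref{eq:cohomology}. The key observation is that for an orientable $QHS^n$, the top class lives in $H^n(\mathcal{M}_\lambda,\mathbb{Q})\cong\widetilde{H}^{n-1}(K_\varepsilon,\mathbb{Q})$, while the cup product map \eqref{eq:cup products} pairs $\widetilde{H}^{p-1}(K_{\omega},\mathbb{Q})$ with $\widetilde{H}^{q-1}(K_{\varepsilon-\omega},\mathbb{Q})$ into $\widetilde{H}^{p+q-1}(K_{\varepsilon},\mathbb{Q})$. Since $\mathcal{M}_\lambda$ is a closed orientable $n$-manifold, Poincar\'e duality asserts that this cup-product pairing into the top group $H^n(\mathcal{M}_\lambda,\mathbb{Q})\cong\mathbb{Q}$ is nondegenerate. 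The plan is therefore to turn this global duality statement into the desired equivalence between the reduced cohomology of $K_\omega$ and that of $K_{\varepsilon-\omega}$.

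First I would fix $\omega\in\mathrm{Row}(\Lambda)\setminus\{0,\varepsilon\}$ and set $\omega'=\varepsilon-\omega$ (which equals $\varepsilon+\omega$ over $\mathbb{Z}_2$, and also lies in $\mathrm{Row}(\Lambda)$ since $\varepsilon\in\mathrm{Row}(\Lambda)$ by orientability and Corollary~\ref{prop:orientability_criterion}). Then I would record that, by \eqref{eq:cohomology}, the group $\widetilde{H}^{p-1}(K_\omega,\mathbb{Q})$ is a direct summand of $H^p(\mathcal{M}_\lambda,\mathbb{Q})$, and likewise $\widetilde{H}^{(n-p)-1}(K_{\omega'},\mathbb{Q})$ is a summand of $H^{n-p}(\mathcal{M}_\lambda,\mathbb{Q})$. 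The main engine is the compatibility of the Choi--Park cup product \eqref{eq:cup products} with the ordinary cup product on $\mathcal{M}_\lambda$: the pairing $\widetilde{H}^{p-1}(K_\omega)\otimes\widetilde{H}^{n-p-1}(K_{\omega'})\to\widetilde{H}^{n-1}(K_\varepsilon)\cong\mathbb{Q}$ is exactly the restriction of the Poincar\'e duality pairing to the corresponding summands. Because summands indexed by distinct row vectors land in different $K_{\omega_1+\omega_2}$ pieces, the only summand of $H^{n-p}$ that can pair nontrivially with the $\omega$-summand of $H^p$ is precisely the $\omega'$-summand. I would then invoke nondegeneracy of Poincar\'e duality to conclude that the pairing restricted to these two summands is perfect, hence $\widetilde{H}^{p-1}(K_\omega,\mathbb{Q})$ and $\widetilde{H}^{n-p-1}(K_{\omega'},\mathbb{Q})$ are dual, and in particular have equal dimension, for every $p$.

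Having established $\dim\widetilde{H}^{p-1}(K_\omega,\mathbb{Q})=\dim\widetilde{H}^{n-p-1}(K_{\varepsilon-\omega},\mathbb{Q})$ for all $p$, the statement follows immediately: $K_\omega$ is a $QHP$, i.e.\ all its reduced rational cohomology vanishes, if and only if all the dual groups vanish, which is exactly the condition that $K_{\varepsilon-\omega}$ is a $QHP$. I expect the main obstacle to be the precise justification that the combinatorial cup product of \eqref{eq:cup products} really is the restriction of the genuine Poincar\'e duality pairing on $\mathcal{M}_\lambda$, rather than merely formally resembling it; this is where one must lean carefully on the statement that \eqref{eq:cohomology}--\eqref{eq:cup products} computes the full rational cohomology \emph{ring}, so that the ring structure it produces is the honest one on $\mathcal{M}_\lambda$ and hence satisfies duality. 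An alternative, cleaner route that sidesteps identifying the pairing explicitly is to apply Poincar\'e duality summand-by-summand purely on dimensions: since $H^p(\mathcal{M}_\lambda,\mathbb{Q})\cong H^{n-p}(\mathcal{M}_\lambda,\mathbb{Q})$ and both decompose as in \eqref{eq:cohomology}, one argues by induction or by a matching of summands that the $\omega$-indexed piece must correspond under duality to the $(\varepsilon-\omega)$-indexed piece; I would present the pairing argument as the primary one since it pins down the correspondence canonically.
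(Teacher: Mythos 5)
Your proposal is correct and takes essentially the same route as the paper: both arguments combine the nondegeneracy of the cup-product pairing on the closed orientable $\mathcal{M}_\lambda$ (Hatcher, Corollary~3.39) with the grading of the Choi--Park product \eqref{eq:cup products}, which forces a class in the $\omega$-summand of \eqref{eq:cohomology} to pair nontrivially only against the $(\varepsilon-\omega)$-summand. The paper runs this element-by-element in dimension $3$ (a nonzero class in $\widetilde{H}^0$ or $\widetilde{H}^1$ of $K_\omega$ produces, via a Poincar\'e dual partner, a nonzero class in the complementary degree of $K_{\varepsilon-\omega}$), whereas you package the identical reasoning as a perfect pairing between the two summands; the mathematical content is the same.
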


\begin{proof}
Assume $K_\omega$ is not an $HP$. Then $\tilde{H}^*(K_\omega,\mathbb{Q})$ has either $0$-cohomology or $1$-cohomology. Assume $0\neq \alpha \in \widetilde{H}^0(K_{\omega},\mathbb{Q})$. By Equation \ref{eq:cohomology}, $\alpha \in H^1(\mathcal{M}_{\lambda},\mathbb{Q})$. By \cite[Corollary 3.39]{Hatcher}, there exists $\beta \in H^2(\mathcal{M}_{\lambda},\mathbb{Q})$ such that $\alpha\smile \beta$ is the generator of $H^3(\mathcal{M}_{\lambda},\mathbb{Q})$. By Equation \ref{eq:cohomology}, then, $\alpha \smile \beta$ is the generator of $\widetilde{H}^2(K,\mathbb{Q})$, since $K_\epsilon=K$ and $|K|\sim \mathbb{S}^2$. Finally, by Equations \ref{eq:cohomology} and \ref{eq:cup products}, we must have that $0\neq \beta \in \widetilde{H}^1(K_{\epsilon-\omega},\mathbb{Q})$, otherwise the product $\alpha \smile \beta$ wouldn't lie in $\widetilde{H}^2(K,\mathbb{Q})$. The case with $\widetilde{H}^1(K_{\omega},\mathbb{Q})\neq 0$ is analog.
\end{proof}

Algorithmically, thus, we can improve Lemma \ref{lemma:subcomplexes are HP} by checking only connectedness of some graphs.

\begin{cor}\label{cor:algorithm}
An orientable $\mathcal{M}_{\lambda}$ is a $QHS^3$ if and only if, for all $\omega \in \mathrm{Row}(\Lambda) \setminus \{0, \varepsilon\}$, the 1-skeleton of $K_\omega$ is connected.
\end{cor}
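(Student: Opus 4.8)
The plan is to reduce the cohomological condition in Corollary~\ref{cor:algorithm} to a purely combinatorial connectivity statement by chasing through the two lemmas just established, using the fact that $K_\omega$ is a full subcomplex of a triangulated $2$-sphere $K \sim \mathbb{S}^2$. First I would invoke Lemma~\ref{lemma:subcomplexes are HP}: an orientable $\mathcal{M}_\lambda$ is a $QHS^3$ precisely when every $K_\omega$ with $\omega \in \mathrm{Row}(\Lambda)\setminus\{0,\varepsilon\}$ is a rational homology point, i.e. $\widetilde{H}^i(K_\omega,\mathbb{Q})=0$ for all $i$. So the whole task is to show that, for these particular subcomplexes of a $2$-sphere, vanishing of \emph{all} reduced rational cohomology is equivalent to connectivity of the $1$-skeleton of $K_\omega$.

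The key geometric input is that $K_\omega$ sits inside $K=K_\mathcal{P}$, which is (the boundary complex of a simple polytope's dual, hence) a triangulation of $\mathbb{S}^2$, and that $K_\omega$ is the \emph{full} subcomplex on the vertex set where $\omega$ has a $1$ (as defined right before the statement). Because $K_\omega$ is a subcomplex of a $2$-dimensional complex, its only possibly nonzero reduced cohomology groups are $\widetilde{H}^0$ and $\widetilde{H}^1$; there is no $\widetilde{H}^2$ to worry about unless $K_\omega=K$, which is excluded since $\omega\ne\varepsilon$. The first step is therefore to record that $\widetilde{H}^0(K_\omega,\mathbb{Q})=0$ is exactly the statement that the $1$-skeleton of $K_\omega$ is connected (reduced $0$-cohomology counts connected components). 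The remaining, and main, point is to argue that for such a full subcomplex of a $2$-sphere, $\widetilde{H}^0(K_\omega,\mathbb{Q})=0$ \emph{forces} $\widetilde{H}^1(K_\omega,\mathbb{Q})=0$ as well.

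This is where Lemma~\ref{lemma:HP condition} does the real work, and I expect the interplay between the two degrees to be the main obstacle to state cleanly. That lemma asserts (via the cup-product duality coming from Equations~\ref{eq:cohomology} and~\ref{eq:cup products}) that $K_\omega$ is a $QHP$ if and only if $K_{\varepsilon-\omega}$ is. Its proof actually shows the finer fact that a nonzero class in $\widetilde{H}^0(K_\omega,\mathbb{Q})$ is dual to a nonzero class in $\widetilde{H}^1(K_{\varepsilon-\omega},\mathbb{Q})$, and symmetrically a nonzero class in $\widetilde{H}^1(K_\omega,\mathbb{Q})$ pairs with a nonzero class in $\widetilde{H}^0(K_{\varepsilon-\omega},\mathbb{Q})$. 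Thus a one-dimensional obstruction in $K_\omega$ (failure of $\widetilde{H}^1=0$) is equivalent to a zero-dimensional obstruction in $K_{\varepsilon-\omega}$ (disconnectedness of its $1$-skeleton). Running the biconditional in both directions, I would conclude: if the $1$-skeletons of all $K_\omega$ (equivalently all $K_{\varepsilon-\omega}$, since $\omega\mapsto\varepsilon-\omega$ permutes $\mathrm{Row}(\Lambda)\setminus\{0,\varepsilon\}$) are connected, then every $\widetilde{H}^0$ vanishes, hence by duality every $\widetilde{H}^1$ vanishes too, so every $K_\omega$ is a $QHP$ and $\mathcal{M}_\lambda$ is a $QHS^3$; conversely a $QHS^3$ makes each $K_\omega$ a $QHP$, whose $1$-skeleton is a fortiori connected.

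The delicate step to get right is the bookkeeping of the involution $\omega\mapsto\varepsilon-\omega$ on the index set: I must check it preserves $\mathrm{Row}(\Lambda)$ (which holds since $\varepsilon\in\mathrm{Row}(\Lambda)$ by orientability and Corollary~\ref{prop:orientability_criterion}) and fixes the excluded pair $\{0,\varepsilon\}$ setwise, so that quantifying connectivity over all $\omega$ is the same as quantifying it over all $\varepsilon-\omega$. Granting that, the equivalence between ``all $\widetilde{H}^0$ vanish'' and ``all $\widetilde{H}^1$ vanish'' is immediate from Lemma~\ref{lemma:HP condition}, and the corollary follows by combining this with Lemma~\ref{lemma:subcomplexes are HP}. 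The only subtlety I would flag is ensuring that the absence of $\widetilde{H}^2$ for $\omega\ne\varepsilon$ is genuinely justified by $K_\omega$ being a proper full subcomplex of the $2$-sphere $K$, so that connectivity of the $1$-skeleton really does capture \emph{all} of the reduced cohomology.
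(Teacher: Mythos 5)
Your proposal is correct and follows essentially the same route as the paper: reduce to the $QHP$ condition via Lemma~\ref{lemma:subcomplexes are HP}, then use the finer duality underlying Lemma~\ref{lemma:HP condition} (a nontrivial $1$-cocycle in $K_\omega$ forces disconnectedness of $K_{\varepsilon-\omega}$) to conclude that connectivity of all $1$-skeletons suffices. Your write-up is just more explicit about the two bookkeeping points the paper leaves implicit, namely that $\widetilde{H}^2(K_\omega,\mathbb{Q})=0$ for proper subcomplexes and that $\omega\mapsto\varepsilon-\omega$ is an involution of $\mathrm{Row}(\Lambda)\setminus\{0,\varepsilon\}$.
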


\begin{proof}
If one proper subcomplex $K_\omega$ has a non-trivial cycle, by Lemma \ref{lemma:HP condition}, the complementary complex $K_{\varepsilon-\omega}$ will be disconnected. It suffices therefore to check if all proper, non-empty subcomplexes $K_\omega$ are connected. But since connectedness depends only on the $1$-skeleton of $K_\omega$, it is enough to check for connectedness on those.
\end{proof}

In the small cover case, the proper subcomplexes have a even better description, and the algorithm can be greatly improved. Given a coloured graph $G$, let $G_X$ be the subgraph containing all the vertices coloured with $x\in X$.

\begin{cor}\label{cor:small cover QHS subgraphs are trees}
Let $\mathcal{M}_{\lambda}$ be a orientable small cover and $G=G(\mathcal{P})$ its $4$-coloured associated graph. Then $\mathcal{M}_{\lambda}$ is a $QHS^3$ if and only if $G_{1,2},G_{1,3}$ and $G_{2,3}$ are trees. In that case, $$\big\{K_\omega \mid \omega \in \mathrm{Row}(\Lambda) \setminus \{0, \varepsilon\}\big\}=\{G_X\mid X\subset [4], |X|=2\}$$ and each of those subcomplexes is a tree.
\end{cor}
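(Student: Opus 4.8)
The plan is to make explicit the dictionary between the vectors $\omega \in \mathrm{Row}(\Lambda)$ and the two-coloured induced subgraphs of $G$, and then reduce the homological condition of Lemma~\ref{lemma:subcomplexes are HP} to a tree condition on only three of them by means of the duality of Lemma~\ref{lemma:HP condition}.

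First I would identify $\mathrm{Row}(\Lambda)$ with the dual space $(\mathbb{Z}_2^3)^*$. Since $\lambda$ is surjective, the assignment $\phi \mapsto (\phi(\lambda(1)), \ldots, \phi(\lambda(m)))$ is an isomorphism of $(\mathbb{Z}_2^3)^*$ onto $\mathrm{Row}(\Lambda)$, so $|\mathrm{Row}(\Lambda)| = 8$. The four orientable colours are $(1,0,0)$, $(0,1,0)$, $(0,0,1)$ and $(1,1,1)$, which I identify with the graph colours $1,2,3,4$ via $\eta$. For a nonzero functional $\phi$, the entry $\omega_i$ equals $1$ precisely when $\lambda(i)$ lies in the affine hyperplane $\{\phi = 1\}$; evaluating each of the seven nonzero functionals on the four colours shows that the functional summing all three coordinates selects every colour (giving $\omega=\varepsilon$, hence $K_\omega=K$), while each of the remaining six selects exactly two colours, and the six resulting pairs exhaust all two-element subsets $X\subset[4]$. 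Together with $\omega=0$ corresponding to the empty complex, this establishes the displayed equality $\{K_\omega\}=\{G_X\mid |X|=2\}$.

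Next I would observe that properness forces each $K_\omega=G_X$ to be a genuine graph: a $2$-simplex of $K$ would require three pairwise-adjacent facets, hence three distinct colours, but only the two colours of $X$ occur among the vertices of $K_\omega$. Thus $K_\omega$ carries no triangles and coincides with the induced subgraph $G_X$. For a $1$-dimensional complex, being a $QHP$ is equivalent to being connected with vanishing first Betti number, i.e. to being a tree; so Lemma~\ref{lemma:subcomplexes are HP} says that $\mathcal{M}_\lambda$ is a $QHS^3$ if and only if all six subgraphs $G_X$ are trees, which in particular proves the last sentence of the statement.

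Finally, I would halve the number of conditions to check. Under the identification above, passing from $\omega$ to $\varepsilon-\omega=\varepsilon+\omega$ corresponds to adding the coordinate-sum functional to $\phi$; since that functional evaluates to $1$ on every orientable colour, this replaces the colour set $X$ by its complement $[4]\setminus X$. Hence the six subgraphs fall into three complementary pairs $\{G_{1,2},G_{3,4}\}$, $\{G_{1,3},G_{2,4}\}$, $\{G_{2,3},G_{1,4}\}$, and Lemma~\ref{lemma:HP condition} guarantees that the two members of each pair are simultaneously $QHP$. It therefore suffices to verify the tree condition for one representative of each pair, namely $G_{1,2},G_{1,3}$ and $G_{2,3}$, which yields the claimed equivalence. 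The main point to get right is the explicit colour bookkeeping of the second paragraph; once the dictionary $\omega\leftrightarrow X$ and its behaviour under complementation are in place, everything else follows formally.
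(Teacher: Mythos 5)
Your proposal is correct and follows essentially the same route as the paper's proof: identifying the six proper subcomplexes $K_\omega$ with the two-coloured subgraphs $G_X$ (the paper does this by matching row $i$ with colours $\{i,4\}$ and row sums $i+j$ with colours $\{i,j\}$, which is your functional-evaluation bookkeeping in less formal dress), noting that properness makes them triangle-free so that $QHP$ reduces to being a tree, and invoking Lemma~\ref{lemma:HP condition} to cut the six checks down to one representative per complementary pair. The only difference is presentational: your dual-space identification of $\mathrm{Row}(\Lambda)$ with $(\mathbb{Z}_2^3)^*$ makes the dictionary and its behaviour under complementation slightly more explicit than the paper's direct computation.
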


\begin{proof}
Let $\varphi$ be the bijection between colours that take $i$ to $e_i$ and $4$ to $e_1+e_2+e_3$. The complex $K_\omega$ corresponding to the line $i$ of $\Lambda$ is the maximal subcomplex containing all vertices of $K$ with colours $i$ and $4$. Similarly, the complex $K_\omega$ corresponding to the sum of the lines $i$ and $j$ in row $\Lambda$ is the maximal subcomplex containing all vertices of $K$ with colours $i$ and $j$. The six subcomplexes in $\big\{K_\omega \mid \omega \in \mathrm{Row}(\Lambda) \setminus \{0, \varepsilon\}\big\}$ therefore consist on the maximal subcomplexes of $K$ containing any combination of two colours in $\{1,2,3,4\}$. 

Since these subcomplexes have only two colours, they cannot contain any triangle, and thus are naturally graphs. And since they must be $HP$, they must be trees. We can look at them, therefore, as the subgraphs $G_{i,j}$. But, by Lemma \ref{lemma:HP condition}, $G_X$ is a tree if and only if $G_{[4]\setminus X}$ is a tree. It suffices thus to check if one subgraph out of each pair $\{G_X,G_{[4]\setminus X}\}$ is a tree.
\end{proof}

\begin{rem}
This algorithm for small covers is significantly faster than the general one proposed in Corollary \ref{cor:algorithm} if one takes into consideration that $QHS$ are rare. Indeed, finding sooner a subcomplex which provides homology to the colouring via Equation \ref{eq:cohomology} allows us to determine sooner when colourings aren't $QHS$. As such, although somewhat counter-intuitive, checking if $3$ subgraphs are trees ends up being faster than checking if all $6$ subcomplexes are connected. Looking directly for subgraphs on specific colours, moreover, allows us to avoid computing the row space of the colouring matrix, which experimentally would consume the greater part of computational time.
\end{rem}

We conclude with a number of lemmas which will be useful for the analysis of possible coloured isometries of $QHS^3$ manifold covers.

\begin{lemma}\label{lemma:parity of complexes}
Let $\Lambda$ be a rank $k$ colouring of $\mathcal{P} \subset \mathbb{X}^n$. Then the number of subcomplexes in $\big\{K_\omega \mid \omega \in \mathrm{Row}(\Lambda) \setminus \{0, \varepsilon\}\big\}$ containing a set of colours of rank $s<k$ is $2^{k-s}-1$. In particular, it is always odd.
\end{lemma}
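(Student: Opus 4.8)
The plan is to reparametrise the index set $\mathrm{Row}(\Lambda)\setminus\{0,\varepsilon\}$ so that the condition ``$K_\omega$ contains a prescribed set of colours'' becomes a single affine-linear condition on $\mathbb{Z}_2^k$, and then just count solutions. First I would record that, since $\Lambda$ has rank $k$ and exactly $k$ rows $\Lambda_1,\dots,\Lambda_k$, these rows are linearly independent; hence the map $y\mapsto\omega_y:=\sum_{i=1}^k y_i\,\Lambda_i$ is a linear isomorphism $\mathbb{Z}_2^k\xrightarrow{\sim}\mathrm{Row}(\Lambda)$. Writing $\langle\,\cdot\,,\cdot\,\rangle$ for the standard pairing on $\mathbb{Z}_2^k$ and noting that the $p$-th column of $\Lambda$ is $\lambda(p)$, the $p$-th coordinate of $\omega_y$ equals $\sum_i y_i\,\lambda(p)_i=\langle y,\lambda(p)\rangle$. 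Thus the vertices of $K_{\omega_y}$ are exactly the facets $p$ with $\langle y,\lambda(p)\rangle=1$, and so, for a colour $c\in\mathrm{im}\,\lambda$, every facet coloured $c$ lies in $K_{\omega_y}$ if and only if $\langle y,c\rangle=1$.

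Next, let $C$ denote the given set of colours and put $U=\mathrm{span}(C)$, so $\dim_{\mathbb{Z}_2}U=s$. By the previous step, $K_{\omega_y}$ contains all of $C$ precisely when $\langle y,c\rangle=1$ for every $c\in C$. The key point is that this inhomogeneous system has a distinguished particular solution: in the orientable setting every colour $c$ has odd weight, so $\langle\varepsilon,c\rangle=1$, meaning that $\varepsilon=(1,\dots,1)$ already solves the system (this is also why $\varepsilon\in\mathrm{Row}(\Lambda)$, via Corollary~\ref{prop:orientability_criterion}). Since $C$ spans $U$ and both $\langle y,\cdot\rangle$ and $\langle\varepsilon,\cdot\rangle$ are linear, the requirement $\langle y,c\rangle=1$ for all $c\in C$ is equivalent to $\langle y-\varepsilon,\cdot\rangle$ vanishing on $U$, i.e. to $y\in\varepsilon+U^{\perp}$. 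As $U^{\perp}$ has dimension $k-s$, there are exactly $2^{\,k-s}$ such $y$.

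Finally I would discard the two excluded rows. Because $\mathrm{im}\,\lambda$ spans $\mathbb{Z}_2^k$, one has $\omega_y=0\iff y=0$ and $\omega_y=\varepsilon\iff y=\varepsilon$. Now $0\notin\varepsilon+U^{\perp}$: if it were, then $\varepsilon\in U^{\perp}$, contradicting $\langle\varepsilon,c\rangle=1$ for $c\in C\subseteq U$; hence removing $\omega=0$ deletes nothing from the count. On the other hand $\varepsilon\in\varepsilon+U^{\perp}$ is counted, so removing $\omega=\varepsilon$ deletes exactly one solution. This leaves $2^{\,k-s}-1$ subcomplexes, and since $s<k$ the number $2^{\,k-s}$ is even, so the count is odd. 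The only genuine obstacle is the consistency step in the middle paragraph: the clean description $y\in\varepsilon+U^{\perp}$ rests on producing an explicit solution of the inhomogeneous system $\langle y,c\rangle=1$, and it is exactly orientability (odd weights) that both guarantees such a solution exists and identifies it as $\varepsilon$; without this hypothesis the solution set need not be a coset of $U^{\perp}$, the count could collapse to $0$, and the stated parity would fail.
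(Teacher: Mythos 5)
Your proof is correct and is essentially the paper's own argument in coordinate-free form: the paper applies an equivalence so that a basis of the colour set becomes $e_1,\dots,e_s$, whence the subcomplexes in question are exactly the sums of rows of $\Lambda$ containing the first $s$ rows --- precisely your coset $\varepsilon+U^{\perp}$, of size $2^{k-s}$ --- and then discards the sum of all rows, which is $\varepsilon$. A merit of your write-up is that it makes explicit what the paper's proof leaves tacit: the particular solution exists, and the excluded $\varepsilon$ actually lies in the counted set, only because the colouring is orientable (all colours of odd weight, equivalently $\varepsilon\in\mathrm{Row}(\Lambda)$), and as you observe the count would be the even number $2^{k-s}$ without this hypothesis --- so the lemma, as in all its uses in the paper, should be read with orientability assumed.
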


\begin{proof}
Choose a basis to the set of colours. Up to equivalence, these vectors can be chosen to be $e_1, \ldots, e_s$. Thus, the subcomplexes $K_\omega$ containing all vertices with those colours are precisely the sums of lines of in $\Lambda$ that contain the first $s$ lines. Since $\Lambda$ has exactly $k$ lines and we must exclude the sum of all lines, which yields $\epsilon$, the number of subcomplexes is precisely $2^{k-s}-1$.
\end{proof}

\begin{lemma}\label{lemma:fixed point}
Let $X$ be an $HP$ simplicial complex and $f:|X|\to |X|$ be a continuous map. Then $f$ has a fixed point.
\end{lemma}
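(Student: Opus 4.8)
The plan is to invoke the Lefschetz fixed point theorem. Here $X$ should be read as a \emph{rational} homology point ($QHP$), consistent with the usage of ``$HP$'' in the preceding proofs: all of its reduced rational homology groups vanish. Since the complexes $K_\omega$ of interest are finite (they are subcomplexes of the finite complex $K_\mathcal{P}$ coming from a polytope), $|X|$ is a compact polyhedron, hence a finite $CW$-complex. This is precisely the setting in which the Lefschetz fixed point theorem applies: if the Lefschetz number
\[
\Lambda_f = \sum_{i\ge 0}(-1)^i \,\mathrm{tr}\!\left(f_*\colon H_i(X;\mathbb{Q})\to H_i(X;\mathbb{Q})\right)
\]
is nonzero, then $f$ admits a fixed point.

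First I would unwind the $QHP$ hypothesis. Saying that $X$ is a rational homology point means $\widetilde{H}_i(X;\mathbb{Q})=0$ for every $i$. In particular $\widetilde{H}_0(X;\mathbb{Q})=0$ forces $X$ to be nonempty and connected, so that $H_0(X;\mathbb{Q})\cong\mathbb{Q}$, while $H_i(X;\mathbb{Q})=0$ for all $i\ge 1$. Next I would compute the Lefschetz number directly. The only nonzero rational homology group is $H_0(X;\mathbb{Q})\cong\mathbb{Q}$; since $X$ is connected, the induced map $f_*$ on $H_0$ sends the class of a point to the class of its image, which is again the generator, so $f_*$ is the identity, of trace $1$. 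Every term with $i\ge 1$ therefore drops out and
\[
\Lambda_f = \mathrm{tr}\!\left(f_*\colon H_0(X;\mathbb{Q})\to H_0(X;\mathbb{Q})\right)=1\neq 0.
\]
The Lefschetz fixed point theorem then immediately produces a fixed point of $f$.

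The only point requiring care -- and the mild ``obstacle'' -- is verifying the hypotheses of the Lefschetz theorem rather than its conclusion: one must confirm that $|X|$ is a compact polyhedron, which is guaranteed by the finiteness of the simplicial complexes arising from polytopes, and that working over the field $\mathbb{Q}$ is legitimate for the trace computation (the rational Lefschetz number detects fixed points just as the integral one does). Both are immediate in our situation, so the statement follows with essentially no further computation.
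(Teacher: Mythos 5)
Your proof is correct and follows essentially the same route as the paper's: both compute the Lefschetz number, observe that the $QHP$ hypothesis kills all terms except the trace on $H_0(X;\mathbb{Q})$, which equals $1$ by connectedness, and conclude via the Lefschetz fixed point theorem. Your version is slightly more careful in spelling out the hypotheses (finiteness of the complex, and that $\widetilde{H}_0=0$ forces $X$ nonempty and connected), which the paper leaves implicit, but the argument is the same.
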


\begin{proof}
The Lefschetz number of $f$ is $$L(f)=\sum_{i\ge 0} (-1)^i \mathrm{tr} \, H_i(f,\mathbb{Q}) \big)=\mathrm{tr}\, H_0(f,\mathbb{Q}),$$ where $H_0(f,\mathbb{Q})$ is precisely the matrix of the permutation of connected components of $X$ induced by $f$. Since $X$ is connected, $H_0(f,\mathbb{Q})=[1]$. We conclude that $L(f)\neq 0$ and $f$ has a fixed point by the Lefschetz fixed point theorem.
\end{proof}


\subsection{Topological obstructions to admissible symmetries}

In the previous subsection, we described how the subcomplexes of $K=K_\mathcal{P}$ give us information about the cohomology of $\mathcal{M}_\lambda$. As it turns out, these subcomplexes can also give us a lot of information about the group $\mathrm{Sym}_\lambda(\mathcal{P})$, specially when $\mathcal{M}_\lambda$ is a $QHS^3$. We'll denote the set of proper subcomplexes by $K_\Omega = \big\{K_\omega \mid \omega \in \mathrm{Row}(\Lambda) \setminus \{0, \varepsilon\}\big\}$. Also, for convenience of notation, we will denote $\mathrm{Fix}(\varphi|_{\partial \mathcal{P}})$ by $\mathrm{Fix}(\varphi)$ throughout the whole section, unless stated otherwise.

Indeed, $\mathrm{Sym}_\lambda(\mathcal{P})$ acts naturally on $\mathbb{Z}_2^{k,\mathrm{or}}$ as a permutation of colours, and this induces a permutation on $K_\Omega$, where $\mathrm{Sym}_\lambda(\mathcal{P})$ acts as a simplicial isomorphism between subcomplexes. By analising the order of the induced permutation of an admissible symmetry $\varphi$ on these sets, we can determine whether the symmetry fixes a proper subcomplex $K_\omega$ or not. We note that, by Corollary \ref{prop:orientability_criterion}, $\epsilon \in \,\mathrm{Row}\,\Lambda$ and $|K_\Omega|=2^k -2$.

We can already deduce an obstruction to antipodal symmetries from this.

\begin{prop}\label{antipodal symmetries}
Let $\lambda$ be an orientable colouring of $\mathcal{P} \subset \mathbb{X}^n$ such that $\mathcal{M}_\lambda$ is a $QHS$. Then $\mathrm{Sym}_{\lambda}(\mathcal{P})$ does not contain antipodal symmetries.
\end{prop}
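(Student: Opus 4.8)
The plan is to argue by contradiction. Suppose $\mathcal{P}$ carried an admissible antipodal symmetry $\varphi$; being the central inversion, $\varphi$ is an involution acting freely on $\partial\mathcal{P}\cong\mathbb{S}^{n-1}$, so $\mathrm{Fix}(\varphi)=\emptyset$ and, dually, the simplicial automorphism induced on $K=K_{\mathcal{P}}$ preserves no simplex. I will reach a contradiction by exhibiting a proper subcomplex $K_\omega\in K_\Omega$ that $\varphi$ preserves setwise. Indeed, since $\mathcal{M}_\lambda$ is a $QHS$, Lemma~\ref{lemma:subcomplexes are HP} tells us that every $K_\omega\in K_\Omega$ is a $QHP$; hence if $\varphi$ preserves such a $K_\omega$, then $\varphi|_{|K_\omega|}$ is a self-map of a connected $QHP$ and has a fixed point by Lemma~\ref{lemma:fixed point}. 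As $\varphi$ is simplicial, that fixed point lies in the relative interior of a simplex $\tau\in K$ which is therefore preserved by $\varphi$; passing back through the duality between $\mathcal{P}$ and $K$, this means $\varphi$ fixes the corresponding face of $\mathcal{P}$, hence its barycentre, contradicting $\mathrm{Fix}(\varphi)=\emptyset$. So it is enough to show that $\varphi$ fixes some element of $K_\Omega$.

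To produce such a fixed subcomplex I would pass to the induced action on $\mathrm{Row}(\Lambda)$. Writing $m=\Psi(\varphi)\in GL^{\mathrm{or}}_k(\mathbb{Z}_2)$, the relation $\varphi^2=\mathrm{id}$ gives $m^2=I$, and under the identification $\omega=x^{\mathsf T}\Lambda$ of the $2^k$ vectors of $\mathrm{Row}(\Lambda)$ with $\mathbb{Z}_2^k$ (available because $\lambda$ is surjective of rank $k$), the permutation $\varphi$ induces on $\mathrm{Row}(\Lambda)$ is a linear involution $\bar m$ with $\dim\ker(\bar m-I)=\dim\ker(m-I)$, and $\varphi$ fixes $K_\omega$ precisely when $\omega\in\ker(\bar m-I)$. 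Over $\mathbb{Z}_2$ any involution satisfies $(m-I)^2=m^2+I=0$, so $\mathrm{im}(m-I)\subseteq\ker(m-I)$ and therefore $\dim\ker(m-I)\ge\lceil k/2\rceil$. Properness forces $k=\mathrm{rank}\,\lambda\ge n\ge 3$ (there being nothing to prove in dimensions $\le 2$), whence $\dim\ker(\bar m-I)\ge 2$.

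Finally I would check that this fixed space contains a genuinely proper class. Both $0$ and $\varepsilon$ are fixed by $\bar m$: the former trivially, the latter because $\varepsilon\in\mathrm{Row}(\Lambda)$ by Corollary~\ref{prop:orientability_criterion} and $\bar m$ permutes $K_\Omega=\mathrm{Row}(\Lambda)\setminus\{0,\varepsilon\}$. Since $\ker(\bar m-I)$ has at least $4$ elements and $\{0,\varepsilon\}$ accounts for only two of them, there is some $\omega\in\ker(\bar m-I)$ with $\omega\notin\{0,\varepsilon\}$, i.e. a proper $\varphi$-fixed subcomplex $K_\omega$, which contradicts the first paragraph. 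I expect the main obstacle to be precisely this bookkeeping of the induced action: verifying that fixed subcomplexes correspond exactly to $\ker(\bar m-I)$ and that $0,\varepsilon$ are the only forced fixed points, so that a two-dimensional fixed space is guaranteed to meet $K_\Omega$. The geometric step in the first paragraph—converting a Lefschetz fixed point in $|K_\omega|\subseteq|K|$ into a fixed face of $\mathcal{P}$ via polytope–nerve duality—is the other place requiring care.
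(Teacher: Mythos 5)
Your proof is correct, and it reaches the paper's key intermediate claim --- the existence of a $\varphi$-invariant proper subcomplex $K_\omega\in K_\Omega$ --- by a genuinely different route. The paper argues locally: it picks any vertex $v$ and its antipode $a(v)$, observes that the colour set $\{\lambda(v),\lambda(a(v))\}$ (of rank $s\le 2<k$) is preserved, and invokes Lemma~\ref{lemma:parity of complexes} to conclude that the subcomplexes of $K_\Omega$ containing these colours form an \emph{odd}-cardinality set permuted by the order-$2$ symmetry, which must therefore fix one of them. You argue globally: the induced action on $\mathrm{Row}(\Lambda)\cong\mathbb{Z}_2^k$ is the linear involution $x\mapsto m^{-\mathsf{T}}x=m^{\mathsf{T}}x$, and since $(m^{\mathsf{T}}-I)^2=0$ in characteristic $2$, its fixed subspace has dimension at least $\lceil k/2\rceil\ge 2$, hence at least four elements, hence one outside $\{0,\varepsilon\}$. (Your bookkeeping claims are all verifiable and are in fact exactly the computations the paper carries out later, in the proof of Proposition~\ref{prop: no reflections}, where the action $v^{\mathsf{T}}\Lambda\mapsto v^{\mathsf{T}}A\Lambda$ is made explicit.) Both routes then finish identically via Lemma~\ref{lemma:subcomplexes are HP}, Lemma~\ref{lemma:fixed point}, and freeness of the antipodal action; your carrier-simplex/dual-face step spells out what the paper leaves implicit when identifying $|K|$ with $\partial\mathcal{P}$. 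What each approach buys: your linear-algebra count is self-contained (it does not need Lemma~\ref{lemma:parity of complexes}) and even produces $2^{\lceil k/2\rceil}-2$ invariant proper subcomplexes, but it only shows that \emph{some} invariant subcomplex exists; the paper's parity argument pins down invariant subcomplexes containing \emph{prescribed} colours, which is the form that gets reused for Proposition~\ref{prop: no good order 2}, where one needs the invariant subcomplex's complement to be disjoint from $\mathrm{Fix}(\varphi)$. Finally, both arguments tacitly require $k\ge 3$ (the parity lemma needs $s<k$, your dimension count needs $\lceil k/2\rceil\ge 2$); this follows from properness in the intended dimension $n=3$, and you are right to flag the restriction explicitly, though your aside that there is ``nothing to prove'' in dimensions $\le 2$ deserves the one-line check that no right-angled $2$-polytope with an orientable $QHS$ colouring admits an antipodal symmetry at all.
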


\begin{proof}
If $\mathcal{P}$ has no antipodal symmetry, we are done. Otherwise, let $\alpha \in \mathrm{Sym}(\mathcal{P})$ be the antipodal symmetry. Assume $\alpha$ is admissible with respect to $\lambda$. Take any vertex $v$ of $K=(\partial \mathcal{P})^*$ and its image $a(v)$. The symmetry $a$ fixes the set $\{v,a(v)\}$, and thus fixes the set of colours $\{\lambda(v),\lambda\big(a(v)\big)\}$ (which could contain only one element). By Lemma \ref{lemma:parity of complexes}, the number of complexes in $K_\Omega$ that contain this set of colours is odd. 

Since $a$ acts as a permutation on this set and has order $2$, it must fix one element $K_\omega$ of that set.
However, due to the fact that $K_\omega$ is a $HP$, Lemma \ref{lemma:fixed point} implies that $\alpha$ has a fixed point in $|K_\omega|$, an absurd since the antipodal map has no fixed points. Therefore $\alpha$ cannot be an admissible symmetry.
\end{proof}

We now analyse the other possible symmetries in $\mathrm{Sym}(\mathcal{P}) < O(2)$. The finite order elements of $O(2)$ are rotations, reflections and rotoreflections (a composition of a rotation and a reflection when they commute). Rotations fix the poles of their rotation axis, while reflections fix the meridian where the reflection plane intersects the sphere. Rotoreflections, however, have no fixed point. We note that rotoreflections must have even order, and that order $2$ rotoreflections is the antipodal map. Moreover, an order $2s$ rotoreflections for $s>1$ even is necessarily the composition of an order $2s$ rotation and a reflection, but an order $2s$ one for $s>1$ odd can be the composition of a reflection with an order $s$ rotation or an order $2s$ rotation.

If any rotation fix $\mathcal{P}$, it must be a face rotation, an edge rotation or vertex rotation, depending whether the axis of the rotation is the midpoint of a face, edge or vertex. But note that if a face is opposed by an vertex or an edge, then their rotation induces a rotation on the $n$-gon whenever $2|n$ or $3|n$, respectively. Rotoreflections, by the other hand, must have the same type of midpoint on their poles, although then don't need to be symmetrical by a reflection (the edge rotoreflection, for instance, combines a $\frac{\pi}{2}$ rotation with the reflection). Finally, we point out that that right-angled $3$-polytopes have vertex valence $3$, thus their vertex rotation must necessarily be of order $3$. We list the possible symmetries, their orders, whether they are orientation preserving or reversing, and their fixed points on the boundary of the polytope in Table \ref{tabela1}.

\medskip

\begin{defn}
An admissible symmetry is called \textit{good} if the colours around its fixed points (i.e. the colours in the set $\{\lambda(F) \mid F \in \mathcal{F}, F \cap \mathrm{Fix}(\varphi)\neq\emptyset\}$) do not span the whole colouring space.
\end{defn}

\begin{prop}\label{prop:is good}
Let $\mathcal{P} \subset \mathbb{X}^3$, $\lambda$ be a rank $k$ colouring and $\varphi \in \mathrm{Sym}_\lambda(\mathcal{P})$. Suppose one of the following is true:
\begin{enumerate}
    \item $\varphi$ is a rotoreflection;
    \item $\varphi$ is a face rotation;
    \item $\varphi$ is a face-edge rotation and $k\ge 4$;
    \item $\varphi$ is a edge rotation and $k\ge 5$;
    \item $\varphi$ is a face-vertex rotation and $k\ge 5$;
    \item $\varphi$ is a vertex rotation and $k\ge 7$.
\end{enumerate}
Then $\varphi$ is good.
\end{prop}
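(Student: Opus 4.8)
The plan is to exploit the fact that being good depends only on the facets that meet $\mathrm{Fix}(\varphi)=\mathrm{Fix}(\varphi|_{\partial\mathcal{P}})$, and to bound their number directly from the geometry recorded in Table~\ref{tabela1}. Since $\varphi$ is a finite-order isometry of the compact polytope $\mathcal{P}$, it fixes the centroid and hence acts on the boundary sphere $\partial\mathcal{P}\cong\mathbb{S}^2$ as an element of $O(3)$. Writing $N(\varphi)$ for the number of facets $F$ with $F\cap\mathrm{Fix}(\varphi)\neq\emptyset$, the colours around $\mathrm{Fix}(\varphi)$ form a set of at most $N(\varphi)$ vectors, so their $\mathbb{Z}_2$-span has dimension at most $N(\varphi)$. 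It therefore suffices to prove $N(\varphi)<k$ in each of the six cases, since this forces the span to be a proper subspace of $\mathbb{Z}_2^k$ and makes $\varphi$ good.

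First I would compute $N(\varphi)$ for each symmetry type by locating the poles of the axis (or noting their absence). A rotoreflection fixes no point of $\partial\mathcal{P}$ (its only fixed point in the ambient space is the centroid), so $\mathrm{Fix}(\varphi)=\emptyset$, $N(\varphi)=0$, and the colours around the fixed set span $\{0\}$; hence a rotoreflection is good for every $k$. For a rotation, each pole is either the midpoint of a face, the midpoint of an edge, or a vertex, contributing respectively $1$, $2$, or $3$ facets: a face midpoint lies in the interior of a single facet, an edge midpoint lies on exactly the two facets sharing that edge, and a vertex of a right-angled $3$-polytope has valence $3$ and so lies on exactly three facets. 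Summing the contributions of the two poles gives $N(\varphi)=2$ for a face rotation, $3$ for a face-edge rotation, $4$ for an edge rotation, $4$ for a face-vertex rotation, and $6$ for a vertex rotation. If the two poles happen to share facets the count only drops, so these remain valid upper bounds.

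It then remains to compare each $N(\varphi)$ against $k$. The inequality $N(\varphi)<k$ reads: $2<k$, that is $k\ge 3$, for a face rotation, which always holds since a proper colouring of a $3$-polytope has rank at least $3$ (the three facets at any vertex must receive linearly independent colours); $3<k$, that is $k\ge 4$, for a face-edge rotation; $4<k$, that is $k\ge 5$, for an edge rotation and for a face-vertex rotation; and $6<k$, that is $k\ge 7$, for a vertex rotation. These are exactly the hypotheses of cases (1)--(6), so in each case the colours around $\mathrm{Fix}(\varphi)$ fail to span $\mathbb{Z}_2^k$ and $\varphi$ is good.

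The argument is essentially a bookkeeping of incidences, so the only points needing care are the geometric inputs feeding the count: that a rotoreflection has no boundary fixed point (so the antipodal map is genuinely the degenerate instance and not an exception), and that vertices have valence exactly $3$, which pins down $N(\varphi)=6$ for vertex rotations and keeps the threshold at $k\ge 7$. I would also state explicitly that overlaps between the facet sets at the two poles can only improve the bound, so no separate analysis of coincident facets is required; this is the one place where a naive count might otherwise be questioned.
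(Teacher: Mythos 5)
Your proof is correct and follows essentially the same route as the paper: rotoreflections are good because they fix no boundary point, and for rotations one counts the facets meeting the two poles (face midpoint $\to 1$, edge midpoint $\to 2$, vertex $\to 3$) and observes that the colours around $\mathrm{Fix}(\varphi)$ cannot span $\mathbb{Z}_2^k$ when their number is less than $k$. Your explicit remarks that overlapping facet sets only improve the bound and that $k\ge 3$ holds for proper colourings are minor elaborations of points the paper leaves implicit.
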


\begin{proof} 
Rotoreflections have no fixed points in $\partial \mathcal{P}$, therefore are always good. In any of the other cases, let $\mathrm{Fix}(\varphi)=\{p,q\}$ and consider the subset of faces $\mathcal{F}'=\{F \in \mathcal{F} \mid p \in F \,\text{or}\, q \in F\}$. By definition, if $k>|\mathcal{F}'|$, then $\varphi$ is good. We conclude by noting that the midpoint of a face in contained in a single face, the midpoint of an edge is contained in two faces and a vertex is contained in three faces.
\end{proof}

\begin{table}[h]
\centering
\caption{Possible symmetries of a right-angled $3$-polytope}
\label{tabela1} 
\begin{tabular}{ |c||ccc|  }
 \hline
 symmetry & order & orientation & fixed points in $\partial \mathcal{P}$ \\
 \hline
 \hline
 edge rotation & 2 & preserving & midpoints of opposing edges \\
 $n$-gon rotation & $s$ for $s|n$ & preserving & midpoints of faces \\
 vertex rotation & 3 & preserving & opposing vertices \\
 face-edge rotation & 2 & preserving & midpoints of edge and $2n$-gon \\
 face-vertex rotations & 3 & preserving & vertex and midpoint of $3n$-gon\\
 reflection & 2 & reversing & meridian of $\mathbb{S}^2$ \\
 antipodal & 2 & reversing & none \\
 edge rotoreflection & 4 & reversing & none \\
 vertex rotoreflection & 6 & reversing & none \\
 $n$-gon rotoreflection & $2s$ for $s|n$ odd & reversing & none \\
   & $s$ or $2s$ for $s|n$ even & reversing & none \\
 \hline
\end{tabular}
\end{table}


We also have the following observation.

\begin{prop}\label{prop: no good order 2}
Let $\lambda$ be an orientable colouring of $\mathcal{P} \subset \mathbb{X}^3$ such that $\mathcal{M}_\lambda$ is a $QHS$. Then $\mathrm{Sym}_{\lambda}(\mathcal{P})$ does not contain good order $2$ rotations or reflections.
\end{prop}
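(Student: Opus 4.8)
The plan is to argue by contradiction, following the template of Proposition~\ref{antipodal symmetries}, but replacing the \emph{absence} of fixed points used there with the \emph{constrained location} of the fixed points that goodness forces. Suppose $\varphi\in\mathrm{Sym}_\lambda(\mathcal P)$ is a good rotation or reflection of order $2$. Unlike the antipodal map and the rotoreflections, a rotation or a reflection has $\mathrm{Fix}(\varphi)\neq\emptyset$ in $\partial\mathcal P$ (see Table~\ref{tabela1}). Let $V\subset W$ be the span of the colours around $\mathrm{Fix}(\varphi)$, namely of $\{\lambda(F)\mid F\cap\mathrm{Fix}(\varphi)\neq\emptyset\}$; by definition, goodness says exactly that $\dim V=s<k$. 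The goal is to produce a \emph{proper} subcomplex $K_{\omega_0}\in K_\Omega$ that is invariant under $\varphi$ and contains no facet whose colour lies in $V$. Lemma~\ref{lemma:fixed point} will then yield a fixed point of $\varphi$ inside a facet disjoint from $\mathrm{Fix}(\varphi)$, which is absurd.

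To find $K_{\omega_0}$, I would use the isomorphism $\mathrm{Row}(\Lambda)\cong W^{*}$ given by $\chi\mapsto\omega_\chi=\chi^{T}\Lambda$, under which the $i$-th entry of $\omega_\chi$ is the pairing $\langle\chi,\lambda(F_i)\rangle$. Then $K_{\omega_\chi}$ omits every facet coloured by an element of $V$ precisely when $\chi\in V^{\perp}$. Since $\dim V^{\perp}=k-s\geq 1$, there are $2^{k-s}-1$ nonzero such $\chi$, each giving a distinct \emph{proper} subcomplex: surjectivity of $\lambda$ forbids $\omega_\chi=0$ for $\chi\neq0$, while $\omega_\chi=\varepsilon$ is excluded because the summation functional $v\mapsto\sum_i v_i$ is nonzero on the odd-weight generators of $V$ and hence does not lie in $V^{\perp}$. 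This count $2^{k-s}-1$ is odd (compare Lemma~\ref{lemma:parity of complexes}). Because $\varphi$ maps the facets meeting $\mathrm{Fix}(\varphi)$ to one another, $\Psi(\varphi)$ preserves $V$, so its transpose preserves $V^{\perp}$ and $\varphi$ permutes this odd family of subcomplexes; as $\varphi^2=\mathrm{id}$, this permutation has order at most $2$, and a permutation of order at most $2$ of a set of odd cardinality must fix an element, giving the desired $\varphi$-invariant $K_{\omega_0}$.

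Finally, $K_{\omega_0}$ is proper, hence a $QHP$ by Lemma~\ref{lemma:subcomplexes are HP}, so by Lemma~\ref{lemma:fixed point} the simplicial self-map $\varphi\colon|K_{\omega_0}|\to|K_{\omega_0}|$ has a fixed point $x$. Transporting $x$ through the $\mathrm{Sym}(\mathcal P)$-equivariant identification of $|K|$ with $\partial\mathcal P$ (the dual-cell structure, under which the realization of a simplex $\{i_0,\dots,i_r\}$ of $K$ lies inside $F_{i_0}\cup\dots\cup F_{i_r}$) produces a point $\tilde x\in\partial\mathcal P$ with $\varphi(\tilde x)=\tilde x$, that is $\tilde x\in\mathrm{Fix}(\varphi)$, lying in some facet $F_{i_0}$ with $(\omega_0)_{i_0}=1$. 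But $(\omega_0)_{i_0}=1$ forces $\lambda(F_{i_0})\notin V$, so $F_{i_0}\cap\mathrm{Fix}(\varphi)=\emptyset$, contradicting $\tilde x\in F_{i_0}\cap\mathrm{Fix}(\varphi)$.

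The main obstacle I anticipate is this last step: making the equivariant homeomorphism $|K|\cong\partial\mathcal P$ precise enough that a fixed point of $\varphi$ lying in $|K_{\omega_0}|$ is genuinely pinned inside the union $\bigcup_{(\omega_0)_i=1}F_i$ of facets avoiding $\mathrm{Fix}(\varphi)$. Everything else — the $V^{\perp}$ bookkeeping, the odd count, and the involution-fixes-an-element argument — is routine linear algebra over $\mathbb{Z}_2$ together with the cited lemmas; the geometric dictionary between the dual cells of $K$ and the faces of $\mathcal P$ is where care is needed.
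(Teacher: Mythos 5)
Your proposal is correct and is essentially the paper's own argument: the paper counts the odd family ($2^{k-s}-1$ by Lemma~\ref{lemma:parity of complexes}) of proper subcomplexes \emph{containing} all colours around $\mathrm{Fix}(\varphi)$, extracts a $\varphi$-invariant one via the same order-$2$/odd-cardinality pigeonhole, and then applies Lemma~\ref{lemma:fixed point} to its \emph{complement} $K_{\epsilon-\omega}$ --- which is precisely your family $\{\omega_\chi \mid \chi\in V^{\perp}\setminus\{0\}\}$ under the bijection $\omega\mapsto\epsilon-\omega$ (equivalently $\chi\mapsto\chi+\mathbf{1}$). The dual-cell identification you flag as the main remaining obstacle is likewise left implicit in the paper's proof, so your write-up is, if anything, slightly more careful on that point.
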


\begin{proof}
Suppose $\varphi \in \mathrm{Sym}_{\lambda}(\mathcal{P})$ is a good order $2$ rotation or reflection. By definition, there is at least one proper subcomplex in $K_\Omega$ that contains $\mathrm{Fix}(\varphi)$. Moreover, by Lemma \ref{lemma:parity of complexes}, the number of subcomplexes in $K_\Omega$ containing the colours around $\mathrm{Fix}(\varphi)$ (and therefore containing $\mathrm{Fix}(\varphi)$ in their interior), is odd. It follows that $\varphi$ fixes some proper subcomplex $K_\omega \in K_\Omega$, and thus it also fixed its complementary subcomplex $K_{\epsilon - \omega}$. But $K_{\epsilon-\omega}$ is a $QHP$, and we have that $\varphi$ must fix a point in $|K_{\epsilon-\omega}|$ by Lemma \ref{lemma:fixed point}, an absurd since $|K_{\epsilon-\omega}| \cap \, \mathrm{Fix}(\varphi) = \emptyset$.
\end{proof}

\begin{prop}\label{prop: no reflections}
Let $\lambda$ be an orientable colouring of $\mathcal{P} \subset \mathbb{H}^3$ such that $\mathcal{M}_\lambda$ is a $QHS$. Then $\mathrm{Sym}_{\lambda}(\mathcal{P})$ does not contain reflections.
\end{prop}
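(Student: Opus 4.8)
The plan is to derive a contradiction from the geometry of the fixed locus of the \emph{lifted} reflection, and this is precisely where the hypothesis $\mathcal{P}\subset\mathbb{H}^3$ (rather than an arbitrary $\mathbb{X}^3$) will be used. Suppose, for contradiction, that some reflection $\varphi\in\mathrm{Sym}_\lambda(\mathcal{P})$ is admissible. First I would realise $\varphi$ by a reflection $\hat\varphi$ of $\mathbb{H}^3$ normalising $\Gamma(\mathcal{P})$; since $\varphi$ is admissible it lifts to a coloured isometry $\tilde\varphi$ of $\mathcal{M}_\lambda$ in the sense of \eqref{eq:semidirect_product}, and I would take the lift covered by $\hat\varphi$, so that $\tilde\varphi$ is an isometric involution of $\mathcal{M}_\lambda$. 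Because $\mathcal{M}_\lambda$ is a $QHS$ it is orientable, so $\ker\lambda<\mathrm{Isom}^+(\mathbb{H}^3)$; as $\hat\varphi$ reverses the orientation of $\mathbb{H}^3$, the descended map $\tilde\varphi$ reverses the orientation of $\mathcal{M}_\lambda$. Consequently $\tilde\varphi_*$ acts as $-1$ on $H_3(\mathcal{M}_\lambda,\mathbb{Q})\cong\mathbb{Q}$.

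Next I would compute the Lefschetz number of $\tilde\varphi$ from the rational homology of a $QHS^3$. The only non-vanishing rational homology groups are $H_0\cong H_3\cong\mathbb{Q}$, on which $\tilde\varphi$ acts by $+1$ and $-1$ respectively, whence $L(\tilde\varphi)=1-(-1)=2\neq 0$. By the Lefschetz fixed point theorem this already forces $\mathrm{Fix}(\tilde\varphi)\neq\emptyset$; this is in any case clear geometrically, since $\hat\varphi$ fixes a totally geodesic plane $\mathbb{H}^2\subset\mathbb{H}^3$ whose image in $\mathcal{M}_\lambda$ is non-empty.

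The final step is to identify $\mathrm{Fix}(\tilde\varphi)$ and compare Euler characteristics. Being a non-trivial isometric involution of the closed hyperbolic manifold $\mathcal{M}_\lambda$, the map $\tilde\varphi$ has fixed locus a non-empty, closed, totally geodesic surface $\Sigma$ (the projection of the fixed planes of its lifts), and for a finite-order map one has $L(\tilde\varphi)=\chi\big(\mathrm{Fix}(\tilde\varphi)\big)=\chi(\Sigma)$. Each component of $\Sigma$ carries the induced metric of constant curvature $-1$, so by Gauss--Bonnet it has strictly negative Euler characteristic; hence $\chi(\Sigma)<0$. This contradicts $\chi(\Sigma)=L(\tilde\varphi)=2>0$, and therefore no reflection can be admissible.

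The main obstacle is the geometric input of the last paragraph: checking that the chosen lift is genuinely orientation-reversing and of finite order, that its fixed set is a totally geodesic \emph{surface}, and --- crucially --- that the identity $L(\tilde\varphi)=\chi\big(\mathrm{Fix}(\tilde\varphi)\big)$ holds for this involution. This is exactly where hyperbolicity enters: the same homological computation gives $L(\tilde\varphi)=2$ in any $\mathbb{X}^3$, but only in $\mathbb{H}^3$ is the totally geodesic fixed surface forced to have $\chi<0$, producing the contradiction. I would also emphasise that, unlike the subcomplex argument of Proposition \ref{prop: no good order 2}, this method disposes of \emph{all} reflections simultaneously, including the \textbf{bad} ones whose boundary colours span the whole colouring space.
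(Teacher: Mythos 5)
Your strategy --- lift the reflection, compute its Lefschetz number from the $QHS$ homology, and contradict Gauss--Bonnet on the fixed locus --- is genuinely different from the paper's combinatorial argument, and its homological half is correct: the lift $\tilde\varphi$ covered by $\hat\varphi$ is an orientation-reversing isometric involution of $\mathcal{M}_\lambda$ and $L(\tilde\varphi)=2$. The gap is the assertion that $\mathrm{Fix}(\tilde\varphi)$ is a totally geodesic \emph{surface}. At a fixed point of an orientation-reversing isometric involution of an orientable $3$--manifold, the differential is an orthogonal involution of determinant $-1$, so its $(+1)$--eigenspace has dimension $2$ \emph{or} $0$: the fixed locus is a disjoint union of totally geodesic surfaces \emph{and isolated points}, the latter occurring wherever some lift $\gamma\hat\varphi$, $\gamma\in\ker\lambda$, acts as a point--inversion of $\mathbb{H}^3$. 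Nothing in your argument excludes these. Concretely, if the mirror of $\hat\varphi$ crosses an edge $e=F_1\cap F_2$ of the tessellation perpendicularly at its midpoint $m$, then in the model $(\mathcal{P}\times W)/\!\sim$ the point $[(m,g)]$ is fixed by $\tilde\varphi$ whenever $Ag+g=\lambda(F_1)+\lambda(F_2)$, where $A=\Psi(\varphi)$, and there $\tilde\varphi$ acts locally as $r_{F_1}r_{F_2}\hat\varphi$, i.e.\ as the point--inversion at $m$ (the $\pi$--rotation about $e$ composed with the reflection in the plane perpendicular to $e$). Orientability of $\lambda$ does forbid the analogous face condition $Ag+g=\lambda(F)$ (it would place an orientation-reversing element in $\ker\lambda$), but it does \emph{not} forbid $Ag+g=\lambda(F_1)+\lambda(F_2)$, which is an ``even'' vector preserved by the parity homomorphism.

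Once isolated fixed points are allowed, the identity $\chi\big(\mathrm{Fix}(\tilde\varphi)\big)=L(\tilde\varphi)=2$ produces no contradiction: the image of the mirror does force at least one surface component $\Sigma_1$ with $\chi(\Sigma_1)\le -1$, and the count is then satisfied by, say, three isolated points together with $\chi(\Sigma_1)=-1$. (That this loophole is real and not hypothetical is illustrated by the flat case, which your argument cannot distinguish from the hyperbolic one until the final Gauss--Bonnet step: for a flat $QHS^3$ all totally geodesic surfaces have $\chi=0$, so a lifted admissible reflection would simply be forced to have exactly two isolated fixed points, with no contradiction anywhere.) To complete your proof you would have to show that no lift $\gamma\hat\varphi$ with $\gamma\in\ker\lambda$ is a point--inversion, equivalently that $\lambda(F_1)+\lambda(F_2)\notin\mathrm{im}(A+\mathrm{id})$ for every edge of $\mathcal{P}$ crossed perpendicularly by the mirror; this must use the $QHS$ hypothesis beyond the Betti numbers of $\mathcal{M}_\lambda$, and it is essentially the work the paper does by other means --- restricting the colouring to the right-angled polygon $\mathrm{Fix}(\varphi)\cap\mathcal{P}$, computing the orbifold Euler characteristic of the induced $2$--dimensional cover, and running the connectivity argument on the subcomplexes $K_\omega$, as in the proof of Proposition~\ref{prop: no good order 2}.
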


\begin{proof}
Suppose that there exists $\varphi \in \mathrm{Sym}_{\lambda}(\mathcal{P})$ reflection. By Proposition \ref{prop: no good order 2}, $\varphi$ is not good.

Assume $\mathrm{Fix}(\varphi)\cap \mathcal{P}$ does not contain edges of $\mathcal{P}$. Then $P=\mathrm{Fix}(\varphi)\cap \mathcal{P}$ is a right-angled $n$-gon. Moreover, by \cite[Theorem 2.4]{Vesnin}, $n\ge 5$. Now, consider the colouring $\mu$ induced on $P$ by $\lambda$. Since $P$ is a hyperbolic, right-angled $n$-gon, by Proposition \ref{DJ}, $\mathcal{M}_\mu \cong S_g$, with $g\ge2$. In particular, $\mathcal{M}_\mu$ is not a $QHS$. Then, by Lemma \ref{lemma:subcomplexes are HP}, there is a $\omega_0 \in \mathrm{Row} \Lambda \setminus \{0,\epsilon\}$ such that $(\partial P)^*_{\omega_0}=K_{\omega_0} \cap \mathrm{Fix}(\varphi)$ is disconnected.

Now, since $\varphi$ is not good, $\big\{\lambda(F) \mid \varphi(F)=F \big\}$ spans the colouring space. And since its induced action must fix the colours in this set, it fixes all the colours in $\mathbb{Z}_2^{k,\mathrm{or}}$ and thus all proper subcomplexes $K_\omega \in K_\Omega$. In particular, it fixes $K_{\omega_0}$. 

As such, take two points $p,q$ in distinct connected components of $K_{\omega_0}\cap \mathrm{Fix}(\varphi)$. By Lemma \ref{lemma:subcomplexes are HP}, there is a path $l$ connecting $p$ to $q$ in $K_{\omega_0}$. Since $\varphi$ fixes $\partial l$ pointwise, we have that $\varphi(l)\neq l$, otherwise $\mathrm{int}(l) \subset K_{\omega_0}\cap \mathrm{Fix}(\varphi)$, an absurd. It follows that $l\cup \varphi(l)$ is a cycle in $K_{\omega_0}$. By Lemma \ref{lemma:subcomplexes are HP}, there must be a disc $D \subset K_\omega$ (or a wedge of discs if $l$ and $\varphi(l)$ intersect in the interior) such that $\partial D = l\cup\varphi(l)$. If $D$ is fixed under $\varphi$, we can find a path in $\mathrm{Fix}(\varphi) \cap D$ from $p$ to $q$, an absurd. Otherwise, $D\cup \varphi(D)$ is a non-trivial $2$-chain, also an absurd.

Now, assume $\mathrm{Fix}(\varphi)\cap \mathcal{P}$ contains at least one edge. Then it must fix some colours and exchange others. Let $A = \Psi(\varphi) \in GL_k^{\mathrm{or}}(\mathbb{Z}_2)$ be the isomorphism induced by $\varphi$. Since $A^2=Id$, up  to equivalences, the canonical vectors can be chosen to be colours on faces or around edges in $\mathrm{Fix}(\varphi)\cap \mathcal{P}$, and
$$A=\bigoplus_s\begin{pmatrix}
0 & 1\\
1 & 0
\end{pmatrix} \oplus \bigoplus_{k-2s} (1) \, \, \text{for some} \, \, 0<s<\frac{k}{2}. $$

Now, $A$ acts on $\mathrm{Row} \Lambda$ by taking $v^t \Lambda$ to $v^t A \Lambda$. Therefore, $A$ fixes $\omega=x^t \Lambda \Leftrightarrow x^t A=x^t \Leftrightarrow A^t x = x \Leftrightarrow A x = x$. Moreover, we have the correspondence $$\{K_\omega \in K_\Omega\} \leftrightarrow \big\{ \omega = v^t\Lambda \in \mathrm{Row} \Lambda \setminus \{0,\epsilon\}\big\} \leftrightarrow \big\{v \in \mathbb{Z}_2^k \setminus \{0,(1, \ldots, 1)\}\big\}, $$  
and we have that  $F^* \in K_\omega$ with $\omega=v^T \Lambda$ if and only if $\lambda(F) \cdot v =1$.

Then, let us take a right-angled $n$-gon $P$ and a bijection $\theta$ between the sides of $\mathrm{Fix}(\varphi)\cap \mathcal{P}$ and the sides of $P$. Again, by \cite[Theorem 2.4]{Vesnin}, $n\ge 5$.  Equip $P$ with the colouring $\mu$ into the colouring space $\mathbb{Z}_2^s \oplus \mathbb{Z}_2^{k-2s}\cong \mathbb{Z}_2^{k-s}$ such that, for an edge $e=\mathrm{Fix}(\varphi)\cap F \in \mathrm{Fix}(\varphi)\cap \mathcal{P}$ belonging to a face coloured with $\lambda(F)=(a_1, \ldots, a_{2s},b_1, \ldots, b_{k-2s})$, $\theta(e)$ is coloured with $\mu\big(\theta(e)\big)=(a_1+a_2, \ldots, a_{2s-1}+a_{2s},b_1, \ldots, b_{k-2s})$. This colouring doesn't depend of the choice of $F$ in case $e$ also belongs to another face $F_1\in \mathcal{F}$, since then $\lambda(F_1)=(a_2,a_1, \ldots, a_{2s},a_{2s-1},b_1, \ldots, b_{k-2s})$. Moreover, $\theta$ takes orientable colours of $\mathbb{Z}_2^k$ into orientable colours of $\mathbb{Z}_2^{k-s}$, since it preserves the sum of entries.

Even if $\mu$ is not proper, $\mathcal{M}_\mu$ is still an orientable $2$-orbifold cover of $P$ of degree $2^{k-s}$, because the canonical basis of $\mathbb{Z}_2^k$ is sent to the canonical basis of $\mathbb{Z}_2^{k-s}$ and thus the colouring is surjective. Moreover, this orbifold has at most $p$ branched points of degree $2$, which cover precisely the points around edges with the same colour. Finally, $p<n$, otherwise all the colours on $P$ would have to be the same and the rank of its colouring would be $1$, while $k-s>\frac{k}{2}\ge \frac{3}{2}$. Since a right-angled $n$-gon has orbifold Euler characterist $\frac{n}{4}-\frac{n}{2}+1=\frac{4-n}{4}$, an orientable orbifold of topological genus $g$ with $p$ degree $2$ singular points has Euler characterist $2-2g-\frac{p}{2}$ and an orbifold covering preserves the orbifold Euler characterist, we have that 
\begin{align*}
    2-2g-\frac{p}{2}=\Big(\frac{4-n}{4}\Big) 2^{k-s} &\Rightarrow 2p = 8+ 2^{k-s}(n-4)-8g < 2n \\
    &\Rightarrow g >1 + 2^{k-s-3}(n-4) -\frac{n}{4} \ge 1+ \frac{n-4}{2} - \frac{n}{4}=\frac{n-4}{4}>0.
\end{align*}

We thus have that the orbifold cover of $P$ is not a topological $QHS$, and it follows from Equation \ref{eq:cohomology} that there exists $\xi \in \mathrm{Row} M$ such that $(\partial P)^*_\xi$ is disconnected. Let $x = (x_1, \ldots, x_s, x_{s+1}, \ldots, x_{k-s}) \in \mathbb{Z}_2^s \oplus \mathbb{Z}_2^{k-2s}\cong \mathbb{Z}_2^{k-s}$ be such that $\xi = x^t M$. Put $y=(x_1, x_1, \ldots, x_s, x_s, x_{s+1}, \ldots, x_{k-s})\in \mathbb{Z}_2^k$ and $\omega = y^t \Lambda \in \mathrm{Row}\Lambda$. Since $Ay=y$, the subcomplex $K_\omega$ is fixed under $\varphi$. Moreover, for $e=\mathrm{Fix}(\varphi)\cap F$,
\begin{align*}
    \lambda(F)\cdot y &= (a_1, a_2, \ldots, a_{2s-1}, a_{2s}, b_1, \ldots, b_{k-2s})\cdot (x_1, x_1, \ldots, x_s, x_s, x_{s+1}, \ldots, x_{k-s})\\
    &= (a_1+a_2, \ldots, a_{2s-1}+a_{2s}, b_1, \ldots, b_{k-2s})\cdot (x_1, \ldots, x_s, x_{s+1}, \ldots, x_{k-s}) = \mu\big(\theta(e)\big) \cdot x
\end{align*}
and it follows that $\theta(e)^* \in (\partial P)^* _\xi\Leftrightarrow F^* \in K_\omega$. As such, $K_\omega \cap \mathrm{Fix}(\varphi)$ is also disconnected. As in the previous case, we conclude by showing that this leads to a contradiction.
\end{proof}

\begin{cor}\label{cor: no good order 2}
Let $\lambda$ be an orientable colouring of $\mathcal{P} \subset \mathbb{X}^3$ such that $\mathcal{M}_\lambda$ is a $QHS$. Then $\mathrm{Sym}_{\lambda}(\mathcal{P})$ does not contain any even-order face rotation or rotoreflections other than the edge rotoreflection.
\end{cor}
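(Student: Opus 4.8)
The plan is to prove Corollary~\ref{cor: no good order 2} by reducing the two classes of symmetries (even-order face rotations and even-order rotoreflections distinct from the edge rotoreflection) to symmetries already excluded by the preceding propositions. The key structural fact I would exploit is that any even-order symmetry $\varphi$ contains, among its powers, an order $2$ element $\psi = \varphi^{m/2}$, where $m$ is the order of $\varphi$. Since $\mathrm{Sym}_\lambda(\mathcal{P})$ is a group, whenever $\varphi \in \mathrm{Sym}_\lambda(\mathcal{P})$ we also have $\psi = \varphi^{m/2} \in \mathrm{Sym}_\lambda(\mathcal{P})$, so it suffices to identify $\psi$ and show it is forbidden by Proposition~\ref{prop: no good order 2}, Proposition~\ref{prop: no reflections}, or Proposition~\ref{antipodal symmetries}.

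First I would handle even-order face rotations. If $\varphi$ is a face rotation of even order $m = 2t$, then $\psi = \varphi^{t}$ is an order $2$ face rotation fixing the same pair of face-midpoints as $\varphi$, hence $\mathrm{Fix}(\psi) = \mathrm{Fix}(\varphi)$. By Proposition~\ref{prop:is good}(2), every face rotation is good, so in particular $\psi$ is a good order $2$ rotation. This directly contradicts Proposition~\ref{prop: no good order 2}, which forbids good order $2$ rotations in $\mathrm{Sym}_\lambda(\mathcal{P})$. Hence no even-order face rotation can be admissible.

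Next I would treat the rotoreflections, consulting Table~\ref{tabela1} for the orders that occur. An edge rotoreflection has order $4$ and is explicitly excluded from the statement. For the remaining cases, I use that the square of a rotoreflection is a rotation. If $\varphi$ is a rotoreflection of order $2s$ with $\varphi^s$ the antipodal map, then $\varphi^s \in \mathrm{Sym}_\lambda(\mathcal{P})$ is antipodal, contradicting Proposition~\ref{antipodal symmetries}; this disposes of any rotoreflection whose order is twice an odd number (for example the vertex rotoreflection of order $6$, whose cube is antipodal). In the remaining case the appropriate power of $\varphi$ is an even-order \emph{rotation}: for a rotoreflection of order divisible by $4$ other than the order $4$ edge rotoreflection, a suitable power $\varphi^{j}$ is an even-order face rotation, already excluded by the first part, or an order $2$ rotation, excluded by Proposition~\ref{prop: no good order 2}. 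Reflections themselves are ruled out by Proposition~\ref{prop: no reflections}.

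The main obstacle I anticipate is the bookkeeping in the rotoreflection case: I must match each order listed in Table~\ref{tabela1} (order $4$, order $6$, and the $n$-gon rotoreflections of order $2s$ with $s \mid n$ odd, or order $s$ or $2s$ with $s \mid n$ even) against the correct power that yields a forbidden element, being careful that the chosen power is genuinely one of the excluded types rather than, say, a good odd-order rotation (which is permitted). The subtle point is that I must ensure that in every even-order rotoreflection except the edge rotoreflection, \emph{some} power is either antipodal, a reflection, a good order $2$ rotation, or an even-order face rotation; verifying this case division against the table, and confirming no rotoreflection slips through by having all its proper nontrivial powers be admissible odd-order rotations, is where the care is required.
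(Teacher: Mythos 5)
Your overall strategy --- passing to a suitable power of $\varphi$ and invoking Propositions~\ref{antipodal symmetries}, \ref{prop: no good order 2} and \ref{prop: no reflections} --- is exactly the paper's, and your treatment of even-order face rotations is correct. The gap is in the rotoreflection case, specifically in the claim that the antipodal argument ``disposes of any rotoreflection whose order is twice an odd number.'' As the paper notes just before Table~\ref{tabela1}, a rotoreflection of order $2s$ with $s>1$ odd can arise in two ways: as (order-$2s$ rotation)$\,\circ\,$(reflection), or as (order-$s$ rotation)$\,\circ\,$(reflection). Only in the first case is $\varphi^s$ the antipodal map; in the second case $\varphi^s$ is the \emph{reflection} itself, and Proposition~\ref{antipodal symmetries} says nothing about it. (Your parenthetical example is type-dependent for the same reason: an order-$6$ vertex rotoreflection of the second kind has a reflection, not the antipodal map, as its cube.) These rotoreflections must instead be excluded by applying Proposition~\ref{prop: no reflections} to the power $\varphi^s$ --- this is precisely the paper's third case. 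Your closing sentence ``Reflections themselves are ruled out by Proposition~\ref{prop: no reflections}'' does not fill the hole as written, since it addresses reflections as symmetries of $\mathcal{P}$ (which the corollary does not even mention) rather than identifying $\varphi^s$ as a reflection for this type of rotoreflection; you flag the needed bookkeeping in your final paragraph but never actually perform it.

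A second, smaller defect: in the order-divisible-by-$4$ case you write that a suitable power is ``an order $2$ rotation, excluded by Proposition~\ref{prop: no good order 2}.'' That proposition excludes only \emph{good} order-$2$ rotations; bad edge rotations and bad face-edge rotations are genuinely admissible (Theorem~\ref{thm:possible symmetries}), and indeed the square of the exempted edge rotoreflection is exactly such an edge rotation --- so your rule, taken literally, would wrongly exclude the edge rotoreflection as well. The argument only goes through because the rotoreflections at issue have their poles at face midpoints, so any order-$2$ rotation arising as a power of them is a \emph{face} rotation, which is automatically good by Proposition~\ref{prop:is good}; this identification, which the paper makes explicitly, is the step you must add.
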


\begin{proof}
Let $\varphi \in \mathrm{Sym}_{\lambda}(\mathcal{P})$. If $\varphi$ is an order $2s$ face rotation, then $\varphi^s\in \mathrm{Sym}_{\lambda}(\mathcal{P})$ is an order $2$ face rotation, which would be good by Proposition \ref{prop: no good order 2}, a contradiction.

If, instead, $\varphi$ is a rotoreflection composed of an order $2s$ rotation with $s>2$, then $\varphi^s\in \mathrm{Sym}_{\lambda}(\mathcal{P})$ is an order $2$ face rotation or the antipodal map, depending on whether $s$ is even or odd. In either case, it is a contradiction by  Propositions \ref{prop: no good order 2} and \ref{antipodal symmetries}.

If, finally, $\varphi \in \mathrm{Sym}_{\lambda}(\mathcal{P})$ is a rotoreflection composed of an order $s$ rotation with $s$ odd, then $\varphi^s\in \mathrm{Sym}_{\lambda}(\mathcal{P})$ is a reflection, a contradiction by Proposition \ref{prop: no reflections}.
\end{proof}


\subsection{Admissible Symmetry Groups}

Taking in consideration all the obstructions noted in Propositions \ref{antipodal symmetries}, \ref{prop: no good order 2}, \ref{prop: no reflections} and Corollary \ref{cor: no good order 2}, we can state the following results.

\begin{thm}\label{thm:possible symmetries}
Let $\lambda$ be an orientable colouring of $\mathcal{P} \subset \mathbb{H}^3$ such that $\mathcal{M}_\lambda$ is a $QHS$. Then $\varphi \in \mathrm{Sym}_{\lambda}(\mathcal{P})$ only if $\varphi$ is one of the following:
\begin{enumerate}
    \item a bad edge rotation;
    \item a bad face-edge rotation;
    \item a good edge rotoreflection;
    \item a good odd-order face rotation;
    \item a vertex rotation;
    \item a face-vertex rotation.
\end{enumerate}
In particular, $\mathrm{Sym}_{\lambda}(\mathcal{P})$ does not contain any good order $2$ symmetry, and $\varphi \in \mathrm{Sym}_{\lambda}(\mathcal{P})$ is good only if $\varphi$ is an edge rotoreflection or an odd-order rotation.
\end{thm}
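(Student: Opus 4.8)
The statement is a catalogue: it asserts that the only admissible symmetries surviving the hypotheses are the six listed types. My plan is to treat it as a finite case-check over the complete list of possible symmetries of a right-angled $3$-polytope compiled in Table~\ref{tabela1}, eliminating each forbidden type by invoking exactly one of the obstructions already proved. The whole argument is essentially bookkeeping: every row of the table is either retained (appearing in the final list) or killed by a named proposition. The key preliminary observation is that $\mathrm{Sym}(\mathcal{P})$, as a finite subgroup of $O(3)$ acting on the boundary sphere $\mathbb{S}^2$, consists only of rotations, reflections, and rotoreflections, so Table~\ref{tabela1} is genuinely exhaustive.

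**Main steps, row by row.** First I would dispatch the orientation-reversing non-rotoreflection cases: reflections are excluded outright by Proposition~\ref{prop: no reflections}, and the antipodal map is excluded by Proposition~\ref{antipodal symmetries}. Next I would handle the rotoreflections via Corollary~\ref{cor: no good order 2}: this rules out every rotoreflection except the edge rotoreflection (order $4$), and by Proposition~\ref{prop:is good} any surviving rotoreflection is automatically good, since it has no fixed points on $\partial\mathcal{P}$; this produces item~(3), the \emph{good} edge rotoreflection. For the orientation-preserving rotations I would split by axis type. Face rotations of even order are excluded by Corollary~\ref{cor: no good order 2}, leaving only odd-order face rotations; by Proposition~\ref{prop:is good}(2) a face rotation is always good, giving item~(4). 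Edge rotations have order $2$; by Proposition~\ref{prop: no good order 2} a good one is impossible, so only a \emph{bad} edge rotation can occur, which is item~(1). Face-edge rotations also have order $2$; the same Proposition~\ref{prop: no good order 2} forces them to be bad, giving item~(2). Finally, vertex rotations (order $3$, item~(5)) and face-vertex rotations (order $3$, item~(6)) are odd-order and carry no order-$2$ obstruction, so they survive unconditionally.

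**The concluding clauses.** The ``in particular'' assertions then follow by inspection of what remains. No good order-$2$ symmetry survives: the only order-$2$ candidates left are the bad edge rotation and the bad face-edge rotation, both explicitly bad, while Proposition~\ref{prop: no good order 2} has already removed good order-$2$ rotations and reflections directly. For the last clause, I would observe that among items (1)--(6), the ones permitted to be good are precisely the edge rotoreflection (item~3, order $4$) and the odd-order rotations (the odd-order face rotation, vertex rotation, and face-vertex rotation); the two remaining types are stipulated to be bad. Hence a good admissible symmetry must be an edge rotoreflection or an odd-order rotation.

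**Anticipated obstacle.** None of the steps is deep—every elimination is a citation—so the only real care needed is completeness and consistency of the case division. The subtle point is making sure the rotoreflection analysis in Corollary~\ref{cor: no good order 2} interacts correctly with the parity bookkeeping: one must verify that an order-$2s$ rotoreflection with $s>2$, an order-$2s$ one with $s$ odd, and the genuine edge and vertex rotoreflections are each correctly slotted, and that taking appropriate powers $\varphi^s$ lands in a type already forbidden. Confirming that the edge rotoreflection is the unique rotoreflection escaping these power arguments—because its square is an edge rotation rather than a face rotation or the antipodal map, and an edge rotation is merely required to be bad rather than forbidden outright—is the one place where the argument could slip, so that is where I would concentrate the verification.
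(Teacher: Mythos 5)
Your proposal is correct and follows essentially the same route as the paper, which states this theorem as a direct compilation of Propositions~\ref{antipodal symmetries}, \ref{prop: no good order 2}, \ref{prop: no reflections}, Corollary~\ref{cor: no good order 2} and Proposition~\ref{prop:is good}, applied case by case to the symmetry types in Table~\ref{tabela1}. Your row-by-row bookkeeping, including the observation that the edge rotoreflection survives only because its square is an edge rotation (merely forced to be bad) and that rotoreflections and face rotations are automatically good, is exactly the intended argument.
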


\begin{cor}\label{cor:odd coloured symmetry groups}
Let $\lambda$ be a rank $k\ge 5$ orientable colouring of $\mathcal{P} \subset \mathbb{H}^3$ such that $\mathcal{M}_\lambda$ is a $QHS$. Then any $\varphi \in \mathrm{Sym}_{\lambda}(\mathcal{P})$ is an odd-order rotation. In particular, $|\mathrm{Sym}_{\lambda}(\mathcal{P})|$ is odd.
\end{cor}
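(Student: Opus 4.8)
The plan is to take the classification of admissible symmetries furnished by Theorem~\ref{thm:possible symmetries} as the starting point and, under the additional hypothesis $k \ge 5$, to discard every entry on that list which is not an odd-order rotation. The list has six items: a bad edge rotation, a bad face-edge rotation, a good edge rotoreflection, a good odd-order face rotation, a vertex rotation, and a face-vertex rotation. The last three are already odd-order rotations --- vertex and face-vertex rotations have order $3$, since a compact right-angled $3$-polytope has vertex valence $3$ --- so the whole argument reduces to ruling out the first three items for $k \ge 5$.

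The two bad rotations fall out immediately from the goodness criterion of Proposition~\ref{prop:is good}. That proposition asserts that an edge rotation is automatically good as soon as $k \ge 5$, and that a face-edge rotation is automatically good as soon as $k \ge 4$. Hence, for $k \ge 5$, a \emph{bad} edge rotation or a \emph{bad} face-edge rotation simply cannot occur, and items (1) and (2) are vacuous.

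The edge rotoreflection is the one case that requires an actual idea, and I expect it to be the crux. Here I would pass to the square: if $\varphi$ is the order-$4$ edge rotoreflection, then $\varphi^2$ is the rotation by $\pi$ about the same axis through the midpoints of the two opposing edges, i.e.\ an order-$2$ edge rotation, and it again lies in $\mathrm{Sym}_\lambda(\mathcal{P})$. By Proposition~\ref{prop:is good} this order-$2$ edge rotation is good once $k \ge 5$, which contradicts Proposition~\ref{prop: no good order 2} (there are no good order-$2$ rotations). Thus $\mathrm{Sym}_\lambda(\mathcal{P})$ contains no edge rotoreflection when $k \ge 5$.

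Having eliminated items (1)--(3), every $\varphi \in \mathrm{Sym}_\lambda(\mathcal{P})$ is a good odd-order face rotation, a vertex rotation, or a face-vertex rotation, each of which is an odd-order rotation; this gives the first assertion. For the final clause I would invoke Cauchy's theorem: if $|\mathrm{Sym}_\lambda(\mathcal{P})|$ were even it would contain an element of order $2$, contradicting the fact that every non-identity element has odd order; hence $|\mathrm{Sym}_\lambda(\mathcal{P})|$ is odd. The only points needing care are the correct identification of $\varphi^2$ for the rotoreflection and the bookkeeping that the goodness thresholds of Proposition~\ref{prop:is good} are indeed met at $k \ge 5$.
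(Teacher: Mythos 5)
Your proposal is correct and follows essentially the same route as the paper: the paper's proof likewise observes that by Proposition~\ref{prop:is good} any order-$2$ rotation would be good for $k \ge 5$ (contradicting Theorem~\ref{thm:possible symmetries}), excludes edge rotoreflections because their squares are edge rotations, and concludes that only odd-order rotations remain. Your explicit appeal to Cauchy's theorem for the final clause and the order-$3$ bookkeeping for vertex and face-vertex rotations are just slightly more detailed versions of steps the paper leaves implicit.
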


\begin{proof}
By Proposition \ref{prop:is good}, any order $2$ rotation would be good for $k \ge 5$, a contradiction to the theorem. Edge rotoreflections are excluded too, because their squares are edge rotations. The only remaining symmetries are odd-order rotations.
\end{proof}

If we fix the rank of the colouring, and thus the number of colours, we can also obtain an obstruction to the order of the symmetries and a description of the group of admissible symmetries.

\begin{prop}\label{prop:order preserving}
Let $\lambda$ be a rank $k$ orientable colouring of $\mathcal{P} \subset \mathbb{H}^3$ such that $\mathcal{M}_\lambda$ is a $QHS$ and $\varphi \in \mathrm{Sym}_{\lambda}(\mathcal{P})$. Then $\varphi$ must induce a permutation of colours of order $o(\varphi)$. In particular, $o(\varphi)< 2^{k-1}$, except in the case of admissible edge rotoreflections of small covers.
\end{prop}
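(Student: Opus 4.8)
The plan is to prove the two assertions separately. The permutation of colours induced by $\varphi$ is exactly the action of $A:=\Psi(\varphi)\in GL_k^{\mathrm{or}}(\mathbb{Z}_2)$ on the set of orientable colours $\mathbb{Z}_2^{k,\mathrm{or}}$. Since $\mathbb{Z}_2^{k,\mathrm{or}}$ contains the standard basis $e_1,\dots,e_k$ and hence spans $\mathbb{Z}_2^k$, the order of this permutation equals the order of $A$ as a matrix. Thus the first assertion is equivalent to $o(A)=o(\varphi)$, and since $\Psi$ is a homomorphism it reduces to showing that $\Psi$ is injective on $\mathrm{Sym}_{\lambda}(\mathcal{P})$ --- equivalently, that no non-trivial admissible symmetry fixes every colour.

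First I would assume $\psi\in\mathrm{Sym}_{\lambda}(\mathcal{P})$ is non-trivial with $\Psi(\psi)=\mathrm{id}$, so that $\psi$ sends each facet to a facet of the same colour and therefore fixes every subcomplex $K_\omega\in K_\Omega$. By Theorem~\ref{thm:possible symmetries}, $\psi$ is one of six geometric types, which I would rule out in three groups. If $\psi$ is a vertex or face--vertex rotation, the three pairwise--adjacent facets meeting at the fixed vertex are cyclically permuted, so they would carry the same colour, contradicting properness; if $\psi$ is an edge or face--edge rotation, the two facets meeting at the fixed edge are interchanged and would again share a colour. For the two remaining types I would invoke Lemma~\ref{lemma:fixed point}: since every $K_\omega$ is an $HP$ (Lemma~\ref{lemma:subcomplexes are HP}) and is fixed by $\psi$, the map $\psi$ must fix a point of each $|K_\omega|$. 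An (edge) rotoreflection has no fixed point on $|K|=\partial\mathcal{P}^*$ at all, an immediate contradiction. An odd--order face rotation fixes on $|K|$ only the two vertices $F_p^*,F_q^*$ dual to the faces $F_p,F_q$ carrying its poles, so by the membership criterion $F^*\in K_{v^t\Lambda}\Leftrightarrow \lambda(F)\cdot v=1$ the fixed-point requirement forces $\lambda(F_p)\cdot v=1$ or $\lambda(F_q)\cdot v=1$ for every $v\in\mathbb{Z}_2^k\setminus\{0,\epsilon\}$; equivalently $\{\lambda(F_p),\lambda(F_q)\}^{\perp}\subseteq\{0,\epsilon\}$. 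But orientability gives $\epsilon\cdot\lambda(F)=1$ for every facet, so $\epsilon$ lies in no such orthogonal complement, forcing it to be $\{0\}$ and hence $\langle\lambda(F_p),\lambda(F_q)\rangle=\mathbb{Z}_2^k$, i.e. $k\le 2$ --- contradicting $k\ge 3$. This establishes $o(A)=o(\varphi)$.

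For the bound I would use the identification $GL_k^{\mathrm{or}}(\mathbb{Z}_2)\cong \mathrm{AGL}_{k-1}(\mathbb{F}_2)$, under which $A$ acts on the $2^{k-1}$ colours as an affine transformation of $\mathbb{F}_2^{k-1}$. When $o(\varphi)$ is odd (vertex, face--vertex, or odd face rotations) the affine map has a fixed colour, hence is conjugate to its linear part $B\in GL_{k-1}(\mathbb{F}_2)$; having odd order, $B$ is semisimple, so $o(\varphi)=o(B)\mid \mathrm{lcm}_i(2^{d_i}-1)\le\prod_i(2^{d_i}-1)<2^{\sum_i d_i}=2^{k-1}$, where the $d_i$ are the degrees of the rational canonical blocks. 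The only even orders that occur are $2$ (edge and face--edge rotations) and $4$ (edge rotoreflections); since $k\ge 3$ these satisfy $o(\varphi)\le 2^{k-1}$, with equality possible only when $2^{k-1}=4$, i.e. $k=3$ and $o(\varphi)=4$. A rank-$3$ colouring of a compact right-angled $3$-polytope is exactly a small cover, so this is precisely the announced exception of an edge rotoreflection of a small cover; in every other case $o(\varphi)<2^{k-1}$.

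The hard part will be the injectivity step for the fixed-point-free and two-fixed-point symmetries (edge rotoreflections and odd face rotations): this is where the $QHS$ hypothesis is genuinely used, through the fact that every proper subcomplex $K_\omega$ is an $HP$ together with the Lefschetz fixed-point lemma, combined with the orientability observation that $\epsilon$ is orthogonal to no colour. The remaining cases are elementary adjacency arguments, and the order bound is then a routine computation in $\mathrm{AGL}_{k-1}(\mathbb{F}_2)$.
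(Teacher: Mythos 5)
Your proof is correct, and its first half is the paper's argument in different packaging: the paper proves $o(\tilde{\varphi})=o(\varphi)$ type by type, handling a face rotation with $1<o(\tilde{\varphi})<o(\varphi)$ by passing to the power $\varphi^{o(\tilde{\varphi})}$ --- which is exactly your reduction to triviality of $\ker\Psi$ --- and then uses the same two tools you do: properness at a fixed edge or vertex for the (face--)edge and (face--)vertex rotations, and Lemma~\ref{lemma:fixed point} applied to the QHP subcomplexes for face rotations (the paper picks the subcomplex avoiding the colours of the rotated faces and notes it is fixed yet misses the poles; you dualize, forcing every fixed subcomplex to meet $\{F_p^*,F_q^*\}$ and hence $k\le 2$; the two versions are the same idea). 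You diverge in two places. For the edge rotoreflection the paper does not use Lefschetz at all: it notes that $\varphi^2$ would be an edge rotation fixing all colours, which properness at the fixed edge forbids; your fixed-point-free argument is equally valid and more uniform across cases. More significantly, your proof of the bound $o(\varphi)<2^{k-1}$ is more complete than the paper's, whose entire justification is that a face rotation fixes at least one colour; read as a claim about abstract permutations, that is insufficient (a permutation of $2^{k-1}$ points fixing one of them can have order exceeding $2^{k-1}$, e.g.\ order $15$ on $8$ points), so the linearity of the colour action must be invoked, and your computation --- odd order forces the linear part in $GL_{k-1}(\mathbb{Z}_2)$ to be semisimple, whence $o(\varphi)\mid\mathrm{lcm}_i(2^{d_i}-1)<2^{k-1}$ --- supplies precisely the step the paper leaves implicit. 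One small simplification available to you: the only symmetries for which the bound is at stake are odd-order face rotations, and there the fixed colour is just $\lambda$ of the rotated face, so no appeal to general facts about fixed points of odd-order affine maps over $\mathbb{Z}_2$ is needed.
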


\begin{proof}
Let $\varphi \in \mathrm{Sym}_{\lambda}(\mathcal{P})$ be any and $\tilde{\varphi}$ be the permutation of elements $\mathbb{Z}_2^{k,\mathrm{or}}$ induced by $\varphi$. We have that $o(\tilde{\varphi})$ divides $o(\varphi)$. 

Assume $\varphi$ is a face-rotation. If $o(\tilde{\varphi})=1$, then $\tilde{\varphi}$ fixes all the colours, and it follows that $\varphi$ fixes every complex $K_\omega \in K_\Omega$. We can conclude, as in the proof of Proposition \ref{prop: no good order 2}, that the complex not containing the colours of the rotated faces must be fixed by the symmetry, and thus contain a fixed point, an absurd. If, instead, $1<o(\tilde{\varphi})<o(\varphi)$, we can take the non-trivial face rotation $\xi=\varphi^{o(\tilde{\varphi})}\in \mathrm{Sym}_{\lambda}(\mathcal{P})$. By construction, $\tilde{\xi}$ fixes all the colours, again an absurd. Thus, we must have that $o(\tilde{\varphi})=o(\varphi)$. 

Otherwise, if $\varphi$ is a (face--) edge rotation or a (face--) vertex rotation, then its order is $2$ or $3$ respectively and $\tilde{\varphi}$ cannot fix all the colours, from which it follows that $o(\tilde{\varphi})=o(\varphi)$. Finally, if $\varphi$ is an edge rotoreflection and $o(\tilde{\varphi})<o(\varphi)=4$, then $\varphi^2$ is an edge reflection that fixes all colours, an absurd since the colours around the edge must be exchanged.

Since $|\mathbb{Z}_2^{k,\mathrm{or}}| = 2^{k-1} \ge 4$, the only admissible symmetries $\varphi$ with $o(\varphi)>4$ are face rotations by Theorem \ref{thm:possible symmetries}, and a face rotation must fix at least one colour, the last result follows.
\end{proof}

\begin{thm}\label{thm:admissible groups}
Let $\lambda$ be a rank $k$ orientable colouring of $\mathcal{P} \subset \mathbb{H}^3$ such that $\mathcal{M}_\lambda$ is a $QHS$. Then $\mathrm{Sym}_{\lambda}(\mathcal{P})$ is isomorphic to a subgroup of $GL_k^{\mathrm{or}}(\mathbb{Z}_2)$. In particular, $\mathrm{Sym}_{\lambda}(\mathcal{P})\hookrightarrow S_4$ for rank $3$ covers (small covers) and $\mathrm{Sym}_{\lambda}(\mathcal{P})\hookrightarrow\big(\mathbb{Z}_2^3 \rtimes SL_3(\mathbb{Z}_2)\big)$ for rank $4$ colourings.
\end{thm}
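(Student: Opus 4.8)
The plan is to realise the whole theorem through the homomorphism $\Psi:\mathrm{Sym}_\lambda(\mathcal{P})\to \mathrm{GL}(W)$ introduced in Section~\ref{subsection:symmetries}: I would show that $\Psi$ is injective with image inside $GL_k^{\mathrm{or}}(\mathbb{Z}_2)$, so that $\mathrm{Sym}_\lambda(\mathcal{P})$ embeds into $GL_k^{\mathrm{or}}(\mathbb{Z}_2)$, and then read off the two ``in particular'' claims by identifying this target group abstractly for $k=3$ and $k=4$. The first step is to check the image lands in $GL_k^{\mathrm{or}}(\mathbb{Z}_2)$. Since $\lambda$ is orientable we may assume every colour lies in $\mathbb{Z}_2^{k,\mathrm{or}}$, and since $\lambda$ is surjective we can extract from the colours a basis $c_1,\dots,c_k$ of $W=\mathbb{Z}_2^k$. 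For admissible $\varphi$ the map $\Psi(\varphi)$ permutes the colours, so each $\Psi(\varphi)(c_i)$ is again orientable; writing $\phi(v)=\sum_i v_i$ for the weight-parity functional, this gives $(\phi\circ\Psi(\varphi))(c_i)=1=\phi(c_i)$ on a basis, whence $\phi\circ\Psi(\varphi)=\phi$ and $\Psi(\varphi)$ preserves $\mathbb{Z}_2^{k,\mathrm{or}}=\phi^{-1}(1)$, i.e. $\Psi(\varphi)\in GL_k^{\mathrm{or}}(\mathbb{Z}_2)$.

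Injectivity is where the $QHS$ hypothesis is actually spent, and I would deduce it directly from Proposition~\ref{prop:order preserving}. If $\varphi\in\ker\Psi$ then $\Psi(\varphi)=\mathrm{id}$, so the permutation $\tilde\varphi$ induced on the colours is trivial, i.e. $o(\tilde\varphi)=1$. Proposition~\ref{prop:order preserving} says the induced colour permutation has order exactly $o(\varphi)$, so $o(\varphi)=1$ and $\varphi=\mathrm{id}$. Hence $\Psi$ is an embedding $\mathrm{Sym}_\lambda(\mathcal{P})\hookrightarrow GL_k^{\mathrm{or}}(\mathbb{Z}_2)$, which is the general statement.

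It then remains to identify $GL_k^{\mathrm{or}}(\mathbb{Z}_2)$ for small $k$. For $k=3$ the set $\mathbb{Z}_2^{3,\mathrm{or}}=\{e_1,e_2,e_3,e_1+e_2+e_3\}$ has four elements whose total sum is $0$ and any three of which form a basis; given a permutation $\sigma$ of these four, the linear map sending $e_i\mapsto\sigma(e_i)$ automatically sends $e_1+e_2+e_3=\sigma(e_1)+\sigma(e_2)+\sigma(e_3)$ to $\sigma(e_1+e_2+e_3)$ and is invertible, so every permutation is realised linearly and $GL_3^{\mathrm{or}}(\mathbb{Z}_2)\cong S_4$, as already noted in Section~\ref{sec:preliminaries}. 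For $k=4$ I would use that $GL_4^{\mathrm{or}}(\mathbb{Z}_2)$ equals the stabiliser of $\phi$, hence preserves the hyperplane $H=\ker\phi\cong\mathbb{Z}_2^3$; fixing $u\notin H$ (say $u=e_1$), each such $g$ is determined by $g|_H\in \mathrm{GL}(H)\cong GL_3(\mathbb{Z}_2)$ together with the vector $h\in H$ with $g(u)=u+h$, and composition gives $(A_1,h_1)(A_2,h_2)=(A_1A_2,\,h_1+A_1h_2)$, the standard semidirect product $\mathbb{Z}_2^3\rtimes GL_3(\mathbb{Z}_2)$. Since $\det\equiv 1$ over $\mathbb{Z}_2$ we have $GL_3(\mathbb{Z}_2)=SL_3(\mathbb{Z}_2)$, yielding $GL_4^{\mathrm{or}}(\mathbb{Z}_2)\cong\mathbb{Z}_2^3\rtimes SL_3(\mathbb{Z}_2)$.

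The main obstacle is really located upstream, in Proposition~\ref{prop:order preserving}: once one knows that an admissible symmetry of a $QHS$ colouring induces a colour permutation of the same order, injectivity of $\Psi$ is immediate and the entire theorem reduces to two short linear-algebra identifications. Those identifications are routine, and a transitivity count $|GL_4^{\mathrm{or}}(\mathbb{Z}_2)|=|GL_4(\mathbb{Z}_2)|/(2^4-1)=20160/15=1344=8\cdot168$ serves as a consistency check that the semidirect-product description has the right order.
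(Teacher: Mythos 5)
Your proposal is correct and follows essentially the same route as the paper: the paper's proof also restricts $\Psi$ to $GL_k^{\mathrm{or}}(\mathbb{Z}_2)$ for orientable colourings, deduces injectivity from the order-preservation in Proposition~\ref{prop:order preserving}, and then invokes the isomorphisms $GL_3^{\mathrm{or}}(\mathbb{Z}_2)\cong S_4$ and $GL_4^{\mathrm{or}}(\mathbb{Z}_2)\cong \mathbb{Z}_2^3\rtimes GL_3(\mathbb{Z}_2)=\mathbb{Z}_2^3\rtimes SL_3(\mathbb{Z}_2)$. The only difference is that you supply explicit verifications (the parity-functional argument and the semidirect-product identification) which the paper states without proof.
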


\begin{proof}
The natural homomorphism $\Psi:\mathrm{Sym}_{\lambda}(\mathcal{P})\to GL_k(\mathbb{Z}_2)$, defined in Subsection~\ref{subsection:symmetries}, can be naturally restricted to one into $GL_k^{\mathrm{or}}(\mathbb{Z}_2)$ in the case of orientable colourings. By Proposition \ref{prop:order preserving}, this homomorphism is order preserving, therefore injective. As such, the general result follows. We also have that $GL_3^{\mathrm{or}}(\mathbb{Z}_2)\cong S_4$ and $GL_4^{\mathrm{or}}(\mathbb{Z}_2) \cong \mathbb{Z}_2 ^3 \rtimes GL_3(\mathbb{Z}_2)$, from which the results for low-rank colourings follow as well.
\end{proof}

\begin{rem}
$\Psi$ doesn't need to be injective in general. Indeed, the canonical colouring of a polytope $\mathcal{P}$, assigning a different canonical vector to each facet of $\mathcal{P}$, has $\mathrm{Sym}_{\lambda}(\mathcal{P}) \cong \mathrm{Sym}(\mathcal{P})$, and this group may be larger than $GL_k^{\mathrm{or}}(\mathbb{Z}_2)$.
\end{rem}

\subsection{Examples}

\begin{example}[Tetrahedron]
The canonical colouring of the right-angled tetrahedron, assigning a distinct canonical vector to each facet, produces the $3$-sphere, which is naturally a $QHS^3$. By construction, every symmetry of the tetrahedron is admissible, and its admissible symmetry group is $S_4$. We note that this group contains reflections, but this is not contradictory since the right-angled tetrahedron is an elliptic orbifold.
\end{example}

\begin{example}[$3$-cube]
The euclidean $3$-cube has a unique rank $4$ colouring class that produces a flat $QHS^3$ known as \textit{the Hantzsche--Wendt manifold} \cite{HW} (see \cite[Section 3.1]{FKS} for its colouring representation).  In this case, the admissible symmetry group is $S_3$.
\end{example}

\begin{example}[Dodecahedron]\label{ex:dodecahedron}
The only orientable small cover of the right-angled dodecahedron $\mathcal{D}$, induced by its $4$-colouring, has admissible symmetry group $A_4$ \cite[p. 6]{GS}. This is largest possible group for an orientable small cover of the dodecahedron, since $\mathrm{Sym}(\mathcal{D})\cong A_5$ and $GL_3^\mathrm{or}(\mathbb{Z}_2)\cong S_4$.

The dodecahedron also admits $44$ $QHS^3$ colourings of rank $4$, up to equivalence. Of those, $25$ have trivial admissible symmetry group, $14$ have $\mathbb{Z}_2$ group, two have $\mathbb{Z}_2 \times \mathbb{Z}_2$ group, other two have $\mathbb{Z}_3$ group and one has $S_3$ group. Again, since $GL_4^\mathrm{or}(\mathbb{Z}_2)\cong SL(3,2)\ltimes \mathbb{Z}_2 ^3$, we have that $\mathrm{Adm}_\lambda (\mathcal{P}) < A_4$. Moreover, order $2$ elements necessarily correspond to edge rotations and order $3$ elements to vertex rotations. The colouring $\lambda$ such that $\mathrm{Sym}_{\lambda}(\mathcal{P})\cong S_3$ is given by the matrix below, and the face labelling of the dodecahedron is given in Figure \ref{im:labels}.
\begin{equation*}
\begin{pmatrix}
1 & 0 & 0 & 1 & 0 & 0 & 1 & 0 & 1 & 1 & 1 & 0 \\
0 & 1 & 0 & 1 & 0 & 1 & 0 & 0 & 0 & 0 & 1 & 0 \\
0 & 0 & 1 & 0 & 0 & 0 & 1 & 1 & 0 & 1 & 0 & 0 \\
0 & 0 & 0 & 1 & 1 & 0 & 1 & 0 & 0 & 1 & 1 & 1 \\
\end{pmatrix}
\end{equation*}
\end{example}

\begin{figure}[h]       
    \fbox{\includegraphics[scale=0.2]{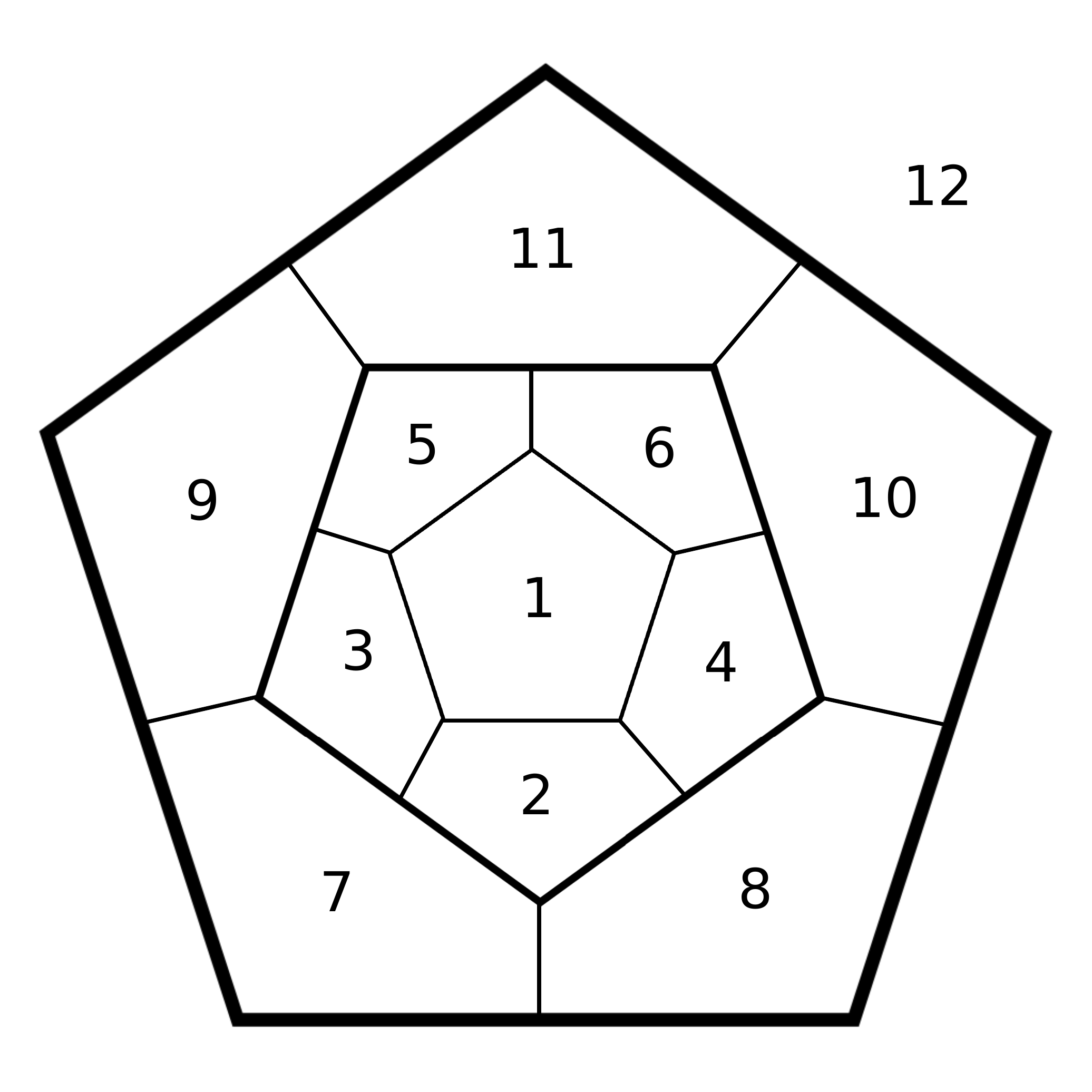}}   
    \hspace{30px}
    \fbox{\includegraphics[scale=0.2]{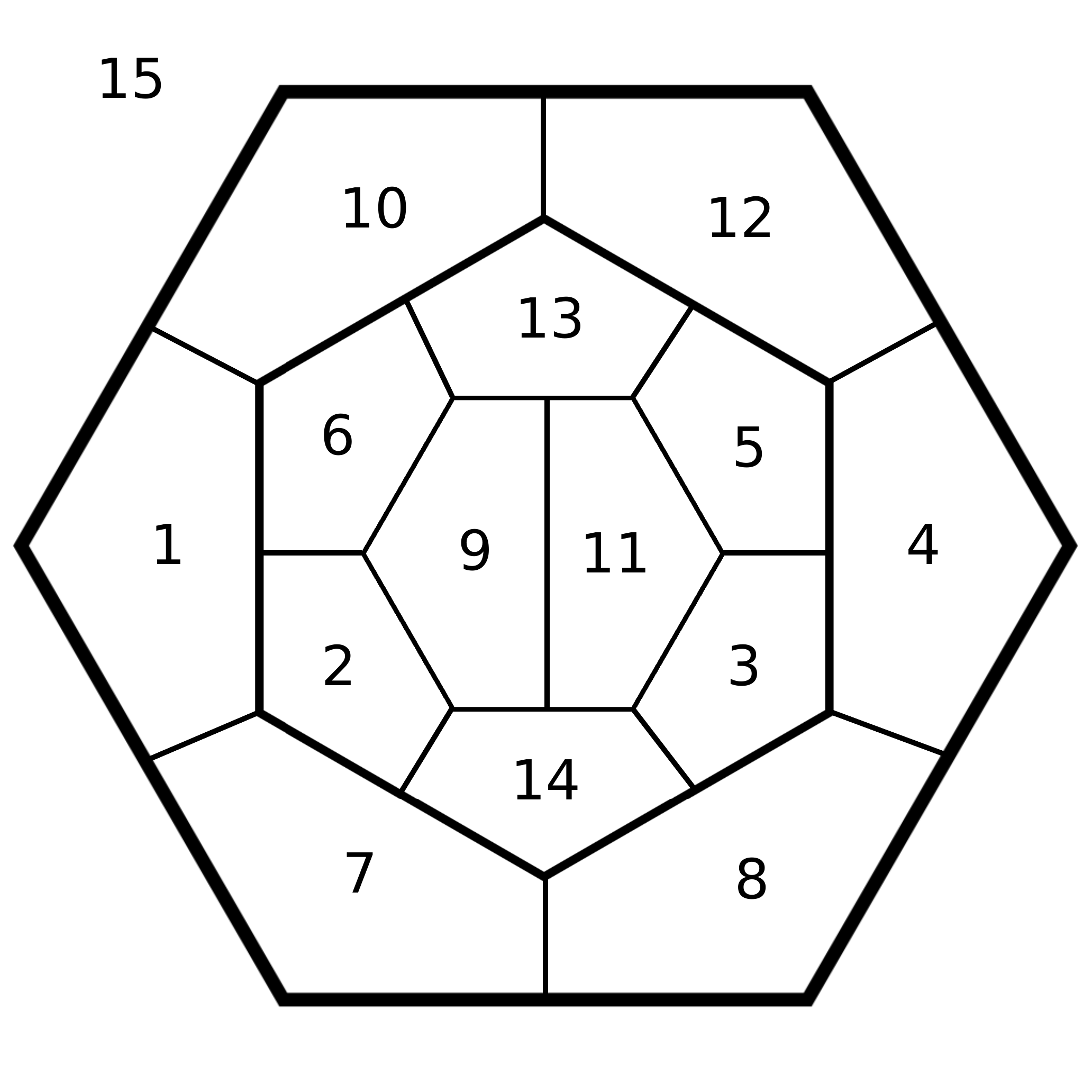}}
    \hspace{30px}
    \fbox{\includegraphics[scale=0.2]{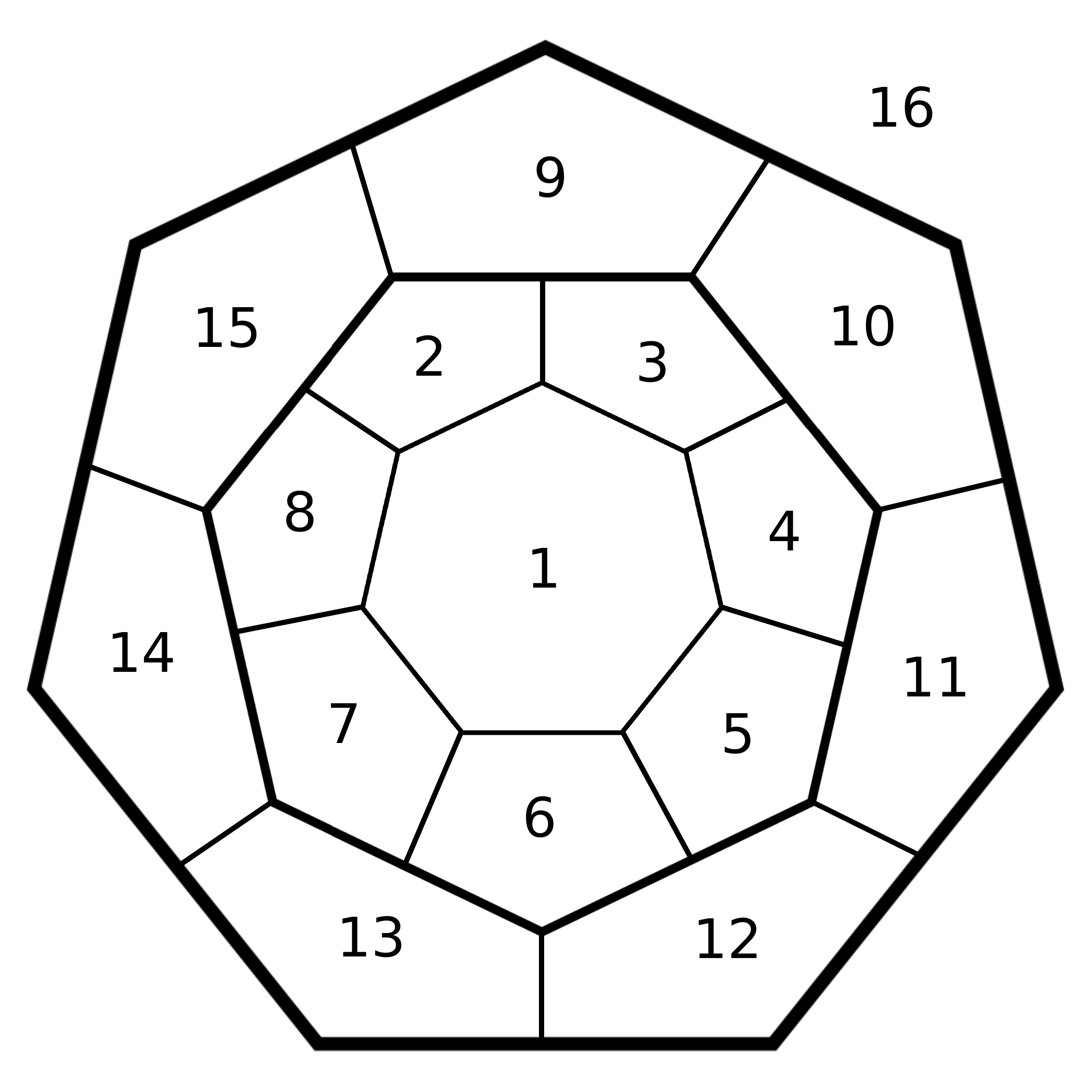}}
    \caption{Left to right: face labellings of the dodecahedron $R(5)$, $R(6)_1^1$ and $R(7)$, respectively}
    \label{im:labels}
\end{figure}

\begin{example}[L\"obell polyhedra]\label{ex:Lobell}
Consider the hyperbolic, right--angled L\"obell polyhedron $R(N)$, defined in \cite[p. 342]{Vesnin}. If $N = 2 \mod 3$, then $R(N)$ admits a small cover $\mathcal{M}_\lambda$ which is a $QHS^3$. Indeed, it is enough to label the opposite $N$-gons with colours $1$ and $2$, and colour the inner and outer rings cyclically with the colours indicated by the red and blue blocks in Figure \ref{im:Lobell}. The other colours (in black) on the inner and outer rings are not repeated. Now we can easily see that the subgraphs $G_{12}$, $G_{13}$ and $G_{23}$ are trees, and by Corollary \ref{cor:small cover QHS subgraphs are trees} this colouring is a $QHS^3$. 
\end{example}

\begin{figure}[h]
    \centering
    \includegraphics[scale=0.3]{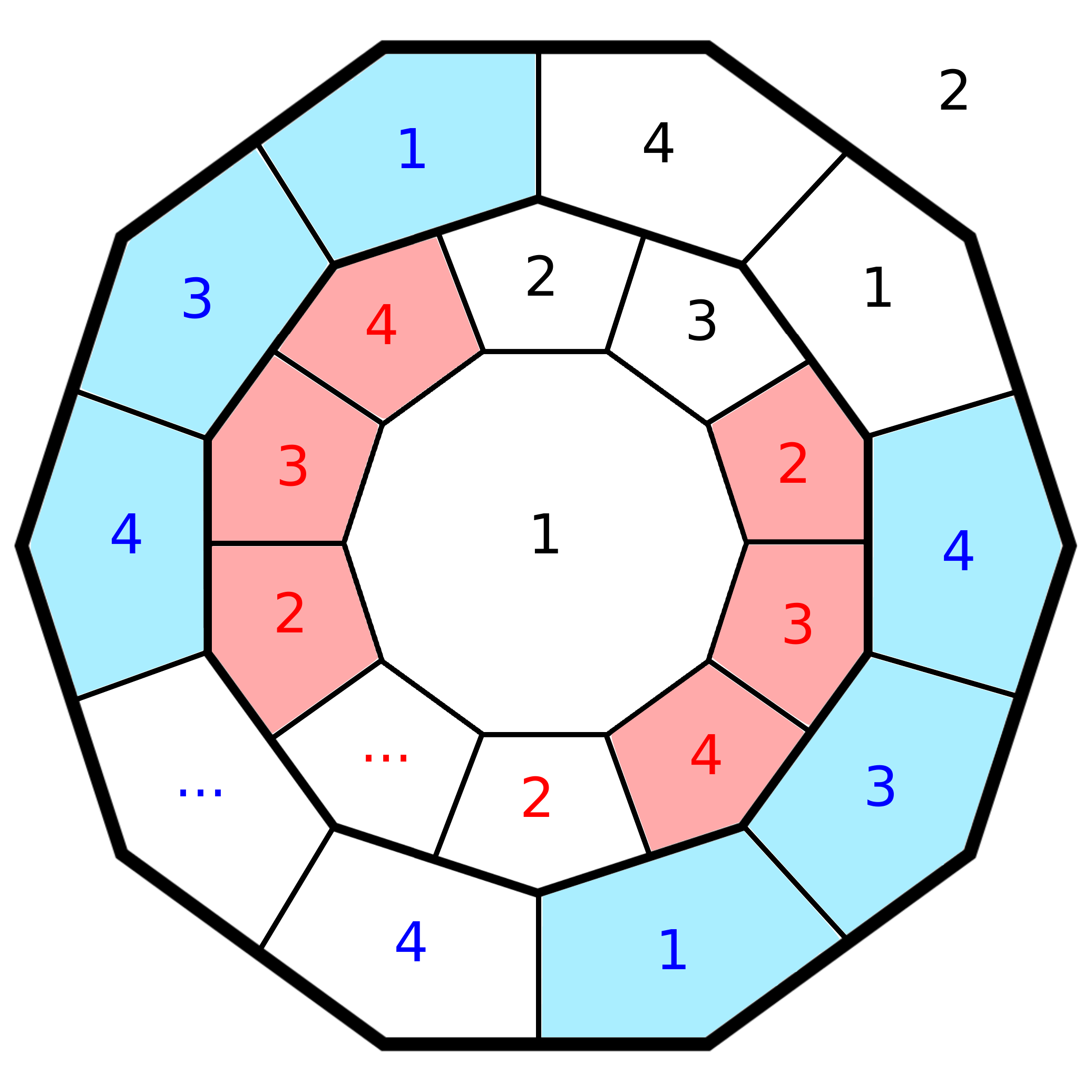}
    \caption{The $4$-colouring of the L\"obell polyhedron $R(N)$ defined in Example \ref{ex:Lobell}}
    \label{im:Lobell}
\end{figure}

\begin{example}[An admissible face-edge rotation]\label{ex:face-edge-rotation}
Let $R(6)_1^1$ be the L\"obell polyhedron $R(6)$ with an edge surgery performed in one of the hexagonal faces as described in \cite[pp. 343-344]{Vesnin}. This polyhedron admits a hyperbolic, right-angled realization, and it has a unique $QHS^3$ small cover $\mathcal{M}_\lambda$. Moreover, we have that $\mathrm{Sym}_\lambda (\mathcal{P}) \cong S_3$, where the order $3$ elements are vertex rotations and the order $2$ elements are face-edge rotations. The colouring $\lambda$ is given by the matrix below, and the face labelling of $R(6)_1 ^1$ is given in Figure~\ref{im:labels}.
\begin{equation*}
\begin{pmatrix}
1 & 0 & 0 & 0 & 1 & 0 & 0 & 1 & 1 & 0 & 0 & 0 & 1 & 1 & 1 \\
0 & 1 & 0 & 1 & 0 & 0 & 0 & 0 & 0 & 1 & 1 & 0 & 1 & 1 & 1 \\
0 & 0 & 1 & 0 & 0 & 1 & 1 & 0 & 0 & 0 & 0 & 1 & 1 & 1 & 1 \\
\end{pmatrix}
\end{equation*}
\end{example}

\medskip

\begin{example}[A $\mathbb{Z}_7$ admissible symmetry group]\label{ex:Lobell2}
Consider the Löbell polyhedron $R(7)$, defined in the previous example, with face labelling given by Figure \ref{im:labels}. The $\mathbb{Z}_2^4$-colouring with colouring matrix given below is a hyperbolic $QHS^3$ with $\mathrm{Sym}_{\lambda}(\mathcal{P}) \cong \mathbb{Z}_7$, generated by the $\frac{2\pi}{7}$-rotation of the opposed heptagonal faces. See Proposition \ref{prop:Z7} for more details.
\begin{equation*}
\begin{pmatrix}
1 & 0 & 0 & 0 & 1 & 0 & 1 & 1 & 1 & 0 & 1 & 1 & 0 & 0 & 0 & 1 \\
0 & 1 & 0 & 0 & 1 & 1 & 1 & 0 & 1 & 1 & 1 & 0 & 1 & 0 & 0 & 0 \\
0 & 0 & 1 & 0 & 0 & 1 & 1 & 1 & 0 & 1 & 1 & 1 & 0 & 1 & 0 & 0 \\
0 & 0 & 0 & 1 & 1 & 1 & 0 & 1 & 1 & 1 & 0 & 1 & 0 & 0 & 1 & 0 \\
\end{pmatrix}
\end{equation*}
\end{example}

\medskip

\begin{example}[An admissible edge rotoreflection]\label{ex:fullerene}
Consider fullerene with face labelling given by the vertex labelling of its dual in Figure \ref{im:fullerene}. The small cover with colouring matrix given below is a hyperbolic $QHS^3$ with $\mathrm{Sym}_{\lambda}(\mathcal{P}) \cong \mathbb{Z}_4$, generated by the rotoreflection of the edges dual to $\{12,13\}$ and $\{14,15\}$. This is the unique $QHS^3$ colouring class of that fullerene containing an admissible edge rotoreflection.
\begin{equation*}
\begin{pmatrix}
1 & 0 & 0 & 1 & 1 & 0 & 1 & 0 & 0 & 0 & 1 & 1 & 0 & 0 & 1 & 1 & 1 & 0 & 0 & 1 & 1 & 0 & 1 & 0 & 0 & 1 & 0 & 1\\
0 & 1 & 0 & 1 & 1 & 0 & 0 & 1 & 0 & 1 & 0 & 1 & 1 & 0 & 1 & 0 & 0 & 0 & 1 & 1 & 1 & 1 & 0 & 0 & 0 & 0 & 1 & 1\\
0 & 0 & 1 & 1 & 1 & 1 & 0 & 0 & 1 & 0 & 0 & 1 & 0 & 1 & 1 & 0 & 0 & 1 & 0 & 1 & 1 & 0 & 0 & 1 & 1 & 0 & 0 & 1
\end{pmatrix}
\end{equation*}

\begin{figure}[h]
    \centering
    \includegraphics[scale=0.35]{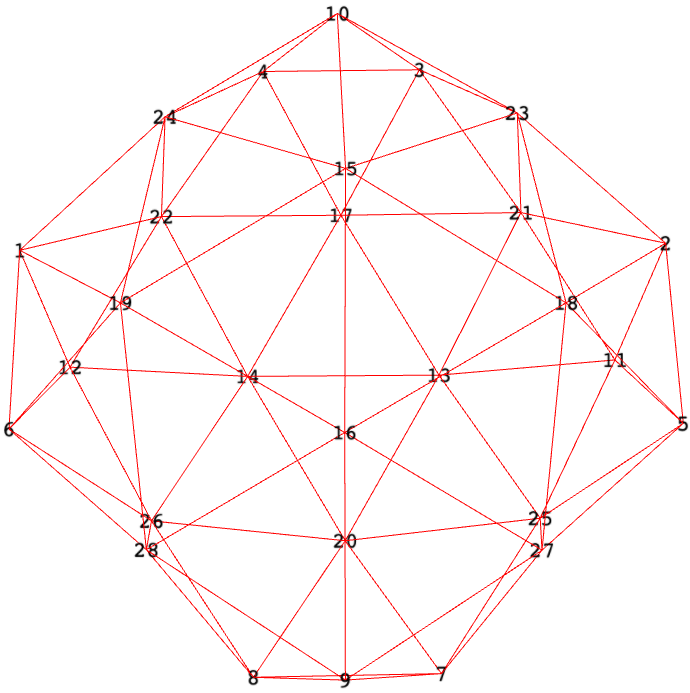}
    \caption{The planar dual to the fullerene from Example~\ref{ex:fullerene}}
    \label{im:fullerene}
\end{figure}
\end{example}

\section{Construction of colourings with given admissible symmetries}\label{sec:constructions}

Knowing what kind of admissible symmetries are useful, we can look for polytopes which contain these symmetries in their symmetry group and then create colourings that are forced to contain them as admissible symmetries.

The general recipe is choosing a symmetry $\varphi \in \mathrm{Sym}(\mathcal{P})$, search for matrices $A \in GL_k(\mathbb{Z}_2)$ of order $o(A)\mid o(\varphi)$ and, having fixed a colour to a facet $F_0\in \mathcal{F}$, set $\lambda\big(\varphi ^i (F_0)\big) = A^i \lambda(F_0)$ for every $1<i<o(\varphi)$. If the obtained colouring is proper, it will contain $\varphi$ as an admissible symmetry.

Moreover, up to equivalences, the choice of colours and matrices can be reduced by fixing some colours as canonical vectors, such as the colours around a vertex for instance. An example of this construction is given below:

\begin{prop}\label{prop:Z7}
There exists a colouring $\lambda$ of a hyperbolic right--angled compact polytope $\mathcal{P}\subset \mathbb{H}^3$ such that $\mathcal{M}_\lambda$ is a $QHS^3$ and $\mathrm{Adm}_{\lambda}(\mathcal{P}) \cong \mathbb{Z}_7$.
\end{prop}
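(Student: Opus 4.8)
The plan is to instantiate the general recipe of this section with $\mathcal{P}=R(7)$ and the order-$7$ rotation $\varphi$ about the axis joining the centres of the two opposite heptagonal faces. By \cite[Theorem 2.4]{Vesnin} the Löbell polyhedron $R(7)$ admits a compact right-angled realization in $\mathbb{H}^3$, on which $\varphi$ acts by an isometry. First I would fix an order-$7$ matrix $A\in GL_4^{\mathrm{or}}(\mathbb{Z}_2)$; concretely one may take
\begin{equation*}
A=\begin{pmatrix} 1 & 0 & 0 & 1 \\ 0 & 0 & 0 & 1 \\ 0 & 1 & 0 & 0 \\ 0 & 0 & 1 & 1 \end{pmatrix},
\end{equation*}
which fixes the orientable colour $e_1$ and cyclically permutes the remaining seven elements of $\mathbb{Z}_2^{4,\mathrm{or}}$ in a single $7$-cycle. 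Since $\varphi$ fixes each heptagon setwise and rotates the two rings of seven pentagons cyclically, setting $\lambda(\varphi^i(F_0))=A^i\lambda(F_0)$, with $e_1$ placed on both heptagons (the unique $A$-fixed colour) and the two $A$-orbits distributed over the two rings, reproduces exactly the colouring matrix displayed in Example \ref{ex:Lobell2}.

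Next I would verify the two defining properties of this colouring. Properness is the finite check that adjacent facets receive distinct colours. That $\mathcal{M}_\lambda$ is a $QHS^3$ follows from Corollary \ref{cor:algorithm}: one lists the subcomplexes $K_\omega$ for $\omega\in\mathrm{Row}(\Lambda)\setminus\{0,\varepsilon\}$ and checks that each has connected $1$-skeleton, and by Lemma \ref{lemma:HP condition} it suffices to test one complex out of each complementary pair $\{K_\omega,K_{\varepsilon-\omega}\}$. Both verifications are routine computations on the explicit matrix and I would relegate them to a direct inspection.

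It remains to identify $\mathrm{Sym}_\lambda(\mathcal{P})$. By construction $\varphi$ is admissible with $\Psi(\varphi)=A$ of order $7$, so $\langle\varphi\rangle\cong\mathbb{Z}_7\le\mathrm{Sym}_\lambda(\mathcal{P})$. For the reverse inclusion, Propositions \ref{antipodal symmetries} and \ref{prop: no reflections} together with Corollary \ref{cor: no good order 2} rule out every orientation-reversing symmetry of $R(7)$ (reflections directly, and all rotoreflections since the only permitted one is the order-$4$ edge rotoreflection, and $R(7)$ carries no symmetry of order $4$). Hence every admissible symmetry is orientation preserving. As the two heptagons are the only heptagonal facets, such a symmetry either fixes both — forcing it to be a rotation about their common axis, i.e. a power of $\varphi$ — or swaps them, in which case its square fixes both heptagons and so lies in the $2$-torsion-free group $\langle\varphi\rangle$, making it an involution $\tau$ with $\tau\varphi\tau^{-1}=\varphi^{-1}$. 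The hard part is excluding this last case, and I would do so through Theorem \ref{thm:admissible groups}, which embeds $\mathrm{Sym}_\lambda(\mathcal{P})$ into $GL_4^{\mathrm{or}}(\mathbb{Z}_2)\cong\mathbb{Z}_2^3\rtimes GL_3(\mathbb{Z}_2)$. An involution inverting $A$ would, under projection to the linear quotient $GL_3(\mathbb{Z}_2)$, yield an element inverting the order-$7$ linear part of $A$ (the linear part is nontrivial, as $A^2\neq\mathrm{id}$). But $GL_3(\mathbb{Z}_2)\cong\mathrm{PSL}_2(7)$ is simple of order $168$, so the normalizer of a Sylow $7$-subgroup has order $21$ and induces on it only automorphisms of order dividing $3$; inversion, the order-$2$ automorphism of $\mathbb{Z}_7$, is not among them. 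Thus no such $\tau$ exists and $\mathrm{Sym}_\lambda(\mathcal{P})=\langle\varphi\rangle\cong\mathbb{Z}_7$.
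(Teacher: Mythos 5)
Your proposal is correct, and its constructive half is the paper's own: same polytope $R(7)$, same recipe, and your explicit matrix $A$ does generate exactly the colouring of Example \ref{ex:Lobell2} (its orbit $e_2 \to e_3 \to e_4 \to e_1+e_2+e_4 \to e_2+e_3+e_4 \to e_1+e_2+e_3 \to e_1+e_3+e_4$ matches the columns of that matrix, ring by ring). Where you genuinely diverge is in pinning down $\mathrm{Sym}_\lambda(\mathcal{P})$. The paper settles both the $QHS^3$ property and the absence of extra admissible symmetries by a computer search over the $14$ candidate colourings; you keep the finite properness/$QHS$ check computational (as the paper must as well), but you replace the computational verification of maximality by a structural argument: orientation-reversing symmetries are killed by Propositions \ref{antipodal symmetries} and \ref{prop: no reflections} and Corollary \ref{cor: no good order 2} together with the fact that $\mathrm{Sym}(R(7))$ (of order $28$, with element orders $1,2,7,14$) has no order-$4$ element, so the only danger is a heptagon-swapping involution $\tau$ with $\tau\varphi\tau^{-1}=\varphi^{-1}$; applying $\Psi$, this would force an element of $GL_4^{\mathrm{or}}(\mathbb{Z}_2)\cong \mathbb{Z}_2^3\rtimes GL_3(\mathbb{Z}_2)$ to conjugate the order-$7$ element $A$ to $A^{-1}$, hence an element of $GL_3(\mathbb{Z}_2)$ inverting an order-$7$ element, which Sylow theory in the simple group of order $168$ forbids (the Sylow $7$-normalizer has order $21$ and induces only automorphisms of order dividing $3$). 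Your group-theoretic facts check out, and this route buys something the paper's check does not: it shows that \emph{no} rank-$4$ orientable colouring can have an admissible dihedral group $D_7$ containing an admissible order-$7$ face rotation, so once $\varphi$ is admissible and the orientation-reversing symmetries are excluded, $\mathbb{Z}_7$ is automatic. The cost is that you must know the symmetry group of $R(7)$ explicitly; the paper's brute-force check needs no such analysis but yields no structural insight.
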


\begin{proof}
Naturally, in order to have an order $7$ element in $\mathrm{Adm}_{\lambda}(\mathcal{P})$, we must have a symmetry with such order in $\mathrm{Adm}(\mathcal{P})$. Also, by Theorem \ref{thm:possible symmetries}, this symmetry must be a face rotation. Therefore $\mathcal{P}$ must contain opposing heptagons and be symmetric under their rotation. In addition, by Theorem \ref{thm:admissible groups}, we must have that the rank colouring is at least $4$, since $S_4$ does not contain elements of order $7$ but $GL_3(\mathbb{Z}_2)$ does.

The natural candidate, therefore, is $R(7)$, one of the Löbell polyhedra (see Example \ref{ex:Lobell2}), with a colouring of rank $4$. Since the face rotation must fix one colour, all the other $7$ colours in $\mathbb{Z}_2^{4,\mathrm{or}}$ must be permuted cyclically. Moreover, the heptagons must be coloured with the fixed colour. Up to equivalences, we can assume this colour is $e_1$. Thus $\lambda(F_1) = \lambda(F_{16}) = e_1$.

We now look at the faces remaining to be coloured, whose labelling can be seen in Figure \ref{im:labels}. Since we have two remaining orbits of faces under the symmetry, the degree of freedom for the colouring is given by the choice of two colours -- one for each orbit -- and the choice of order $7$ matrix $A \in GL_4^{\mathrm{or}}(\mathbb{Z}_2)$. However, up to equivalence, we can still fix a canonical basis around any triangle. So assume we have $\lambda(F_2)=e_2$ and $\lambda(F_3)=e_3$.

It follows that our degrees of freedom now are the choice of a single colour for the second orbit, such as $\lambda(F_9)$ (which must be different than $e_1$), and the choice of $A \in GL_4^{\mathrm{or}}(\mathbb{Z}_2)$ such that $A^7=\mathrm{id}$, $A(e_1) = e_1$ and  $A(e_2) = e_3$. We will then have $\lambda(F_i)=A^{i-3} e_3$ for $4 \le i\le 8$ and $\lambda(F_i)=A^{i-9} \lambda(F_9)$ for $10 \le i \le 15$.

A computer search over the $14$ possible colourings, letting $A$ and $\lambda(F_8)$ vary, provides only one proper colouring class such that $\mathcal{M}_\lambda$ is a $QHS^3$. By construction, we have that $\varphi \in \mathrm{Adm}_{\lambda}(\mathcal{P})$. Moreover, there are no other admissible symmetries in this colouring (besides the powers of $\varphi$), and it follows that $\mathrm{Adm}_{\lambda}(\mathcal{P}) \cong \mathbb{Z}_7$.
\end{proof}

A similar construction can be applied to the polyhedra from Examples \ref{ex:face-edge-rotation} and \ref{ex:fullerene}.

\section{Open Questions}\label{sec:questions}

A few questions related to $QHS^3$ and their symmetries remain to be answered:

\begin{enumerate}
    \item Is there a $QHS^3$ small cover of a hyperbolic, right-angled polytope whose admissible symmetry group is $S_4$ or $D_8$?
    \item Is there a $QHS^3$ rank $4$ colouring of a hyperbolic, right-angled polytope whose admissible symmetry group has order greater than $7$?
\end{enumerate}

\end{document}